\documentclass[11pt,a4paper]{amsart}
\usepackage[T1]{fontenc}
\usepackage{lmodern}
\usepackage[margin=28mm,headheight=14pt]{geometry}
\usepackage{amsmath,amssymb,amsthm,mathtools,mathrsfs}
\usepackage{booktabs,longtable,array,enumitem,microtype}
\usepackage[dvipsnames]{xcolor}
\usepackage{xurl}
\usepackage[colorlinks=true,linkcolor=MidnightBlue,citecolor=MidnightBlue,urlcolor=MidnightBlue]{hyperref}
\usepackage{bookmark}

\setlist{itemsep=0.2em,topsep=0.4em}

\newtheorem{theorem}{Theorem}[section]
\newtheorem{proposition}[theorem]{Proposition}
\newtheorem{lemma}[theorem]{Lemma}

\theoremstyle{definition}
\newtheorem{definition}[theorem]{Definition}

\theoremstyle{remark}
\newtheorem{remark}[theorem]{Remark}
\numberwithin{equation}{section}

\DeclareMathOperator{\remn}{rem}
\newcolumntype{P}[1]{>{\raggedright\arraybackslash}p{#1}}

\title{Rational Points on Drinfeld Modular Curves of Prime Level}

\author{Ryotaro Sakamoto}

\address{Department of Mathematics\\University of Tsukuba\\1-1-1 Tennodai\\Tsukuba\\Ibaraki 305-8571\\Japan}
\email{rsakamoto@math.tsukuba.ac.jp}

\date{\today}

\subjclass[2020]{Primary 11G09; Secondary 11G18, 14G05.}

\begin{document}

\begin{abstract}
Let $q\ge3$ be a prime power and let $\mathfrak p\subset\mathbf F_q[T]$
be a nonzero prime ideal of degree $d$. We prove that
$Y_0(\mathfrak p)(\mathbf F_q(T))=\emptyset$ if $d \ge 4$ is even or $d>q$,
and that $Y_1(\mathfrak p)(\mathbf F_q(T))=\emptyset$ for every $d\ge3$.
\end{abstract}

\maketitle

\section{Introduction}

The arithmetic of Drinfeld modules over global function fields
shares many features with that of elliptic curves over number fields.
In particular, the structure of their rational torsion submodules has attracted considerable interest.  
Poonen conjectured uniform boundedness of rational torsion
for Drinfeld modules with fixed coefficient ring, rank,
and degree of the field of definition.
He proved the conjecture in rank one and established
uniform boundedness of the $\mathfrak{p}$-primary rational torsion
of rank-two Drinfeld modules over a fixed finite extension
of $\mathbf F_q(T)$, for each fixed nonzero prime ideal $\mathfrak{p}$ of $\mathbf F_q[T]$ \cite{Poo97}. 
Here $q$ denotes a prime power. 
Schweizer subsequently investigated uniform bounds
for the torsion of rank-two Drinfeld modules \cite{Sch03}.
More precisely, Schweizer conjectured that, for every
rank-two Drinfeld $\mathbf F_q[T]$-module $\phi$ over $\mathbf F_q(T)$, if
\[
\phi( \mathbf{F}_q(T))_{\mathrm{tors}}
\cong \mathbf{F}_q[T]/\mathfrak{a}\oplus \mathbf{F}_q[T]/\mathfrak{b},
\qquad \mathfrak{a} \mid \mathfrak{b},
\]
then
\[
\deg(\mathfrak a) + \deg(\mathfrak b) \le 2.
\]
In particular, Schweizer's conjecture implies $Y_1(\mathfrak p)(\mathbf{F}_q(T))=\emptyset$ for any nonzero prime ideal $\mathfrak{p} \subset \mathbf{F}_q[T]$ with $\deg(\mathfrak{p}) \geq 3$; see also \cite[Conjecture~1.2]{Arm12}. Here, $Y_1(\mathfrak p)$ denotes the Drinfeld modular curve parametrizing rank-two Drinfeld $\mathbf{F}_q[T]$-modules equipped with a torsion point whose annihilator is $\mathfrak p$.

A powerful approach to studying rational torsion submodules of Drinfeld modules is to use the geometry of Drinfeld modular curves.  
This approach has its origins in Mazur's work on rational isogenies of elliptic curves.
Mazur introduced the formal immersion method at a cusp
of a modular curve over $\mathbf Q$, and combined it with the finiteness of the Mordell--Weil group of the Eisenstein quotient of its Jacobian \cite{Maz77,Maz78}. 

Let $\mathfrak p$ be a nonzero prime ideal of $A=\mathbf{F}_q[T]$, and let $Y_0(\mathfrak p)$ denote the coarse moduli curve parametrizing rank-two Drinfeld $A$-modules equipped with a cyclic $\mathfrak p$-subgroup. 
We write $X_0(\mathfrak p)$ for the compactification of $Y_0(\mathfrak p)$. 
In the function field setting, Tamagawa constructed the Eisenstein quotient of $\operatorname{Jac}(X_0(\mathfrak p))$ and proved that its minus quotient with respect to the Atkin--Lehner involution has finitely many $\mathbf{F}_q(T)$-rational points \cite[Theorem~5.7(i)]{Tam95}. 
P\'al subsequently developed the theory of the Eisenstein ideal
\cite{Pal07} and proved that the cuspidal divisor group
maps isomorphically onto the group of $\mathbf{F}_q(T)$-rational points
of the Eisenstein quotient \cite[Theorem~3.5]{Pal10}.
Moreover, for $q=2$, P\'al also proved the following results in  {\cite[Theorems~1.2 and~1.4]{Pal10}}.

\begin{theorem}[P\'al]\label{thm:pal}
One has  $Y_0(\mathfrak p)(\mathbf{F}_2(T))=\emptyset$ for any nonzero prime ideal $\mathfrak{p} \subset \mathbf{F}_2[T]$ with $\deg(\mathfrak{p}) \geq 3$. 
Moreover, Schweizer's torsion conjecture holds over
$\mathbf F_2(T)$.
\end{theorem}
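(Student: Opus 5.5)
Pál's theorem concerns $q=2$. The plan is to run Mazur's method on $X_0(\mathfrak p)$ over $F=\mathbf F_2(T)$, then deduce the torsion statement from the emptiness of $Y_0(\mathfrak p)(F)$ together with a list of small cases.

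\textbf{Setup.} Let $\mathfrak p$ have degree $d\ge3$, and let $x\in Y_0(\mathfrak p)(F)$ correspond to a Drinfeld module $\phi$ with an $F$-rational cyclic $\mathfrak p$-subgroup. Let $J=\operatorname{Jac}(X_0(\mathfrak p))$ and let $\widetilde J$ be Tamagawa's Eisenstein quotient, or rather the minus part $\widetilde J^{-}$ for the Atkin--Lehner involution $w_{\mathfrak p}$. By \cite[Theorem~5.7(i)]{Tam95}, $\widetilde J^{-}(F)$ is finite. By \cite[Theorem~3.5]{Pal10}, it is generated by the image of the cuspidal divisor $[0]-[\infty]$.

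\textbf{Step 1: the map to the Jacobian.} Consider
\[
f\colon X_0(\mathfrak p)\to \widetilde J^{-},\qquad P\mapsto \pi\bigl([P]-[\infty]\bigr),
\]
where $\pi$ is the projection. Then $f(x)$ lies in the finite cuspidal group.

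\textbf{Step 2: reduction at a good place.} Choose a place $v$ of degree $1$ or $2$ of $F$ with $v\ne\mathfrak p$; there are only few such places when $q=2$. Using good reduction of $X_0(\mathfrak p)$ away from $\mathfrak p$ and the behaviour of $\phi$ at $v$, show that $x$ reduces to the cusp $\infty$ (or to $0$) modulo $v$, or else to a supersingular point. For $q=2$ the small residue fields force the cuspidal case. One route is that $\phi$ has potentially bad reduction at $\infty$, after a twist and by an analysis of the $j$-invariant $j\in F$. The alternative is to use the reduction mod $\mathfrak p$ as in Mazur's work, where $x$ specializes to a cusp on one of the two components of the special fibre.

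\textbf{Step 3: injectivity of the specialisation.} Since $\widetilde J^{-}(F)$ is finite cuspidal and its torsion specializes injectively at the chosen place, the congruence $f(x)\equiv f(\infty)=0$ forces $f(x)=0$.

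\textbf{Step 4: formal immersion.} Show that $f$ is a formal immersion at $\infty$ in characteristic of $v$. By the usual Mazur criterion, this amounts to showing that the Hecke-module $\mathbb T/I\otimes \mathbf F_\ell$ (where $I$ is the Eisenstein ideal) is generated by the image of $q$-expansion coefficients $a_1,a_2$. With Pál's description of the Eisenstein ideal \cite{Pal07}, this reduces to nonvanishing of a Fourier coefficient of a harmonic cochain. Steps 3 and 4 together give $x=\infty$, contradicting $x\in Y_0$.

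\textbf{Main obstacle.} I expect the formal immersion verification to be the main difficulty. It needs explicit control of the Eisenstein harmonic cochain's coefficients for $\deg\mathfrak p\ge3$. I would try first to use Gekeler's formula for the Fourier expansion of the Drinfeld discriminant quotient $\Delta(z)/\Delta(\mathfrak p z)$, which generates the cuspidal group.

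\textbf{Torsion statement.} If $\phi(F)_{\rm tors}\supset A/\mathfrak a$ with an irreducible factor $\mathfrak p\mid\mathfrak a$ of degree $\ge3$, then $A/\mathfrak p\subset\phi(F)$ gives a point of $Y_0(\mathfrak p)(F)$, which is impossible by the above. The remaining cases, where all prime factors have degree $\le2$, are handled as follows:
\begin{itemize}
\item use Poonen's bounds on $\mathfrak p$-primary torsion \cite{Poo97} for the finitely many primes of degree $\le2$ over $\mathbf F_2$;
\item use explicit genus-zero models of $X_1(\mathfrak n)$ and $X_0(\mathfrak n)$ for small $\mathfrak n$ to check $\deg\mathfrak a+\deg\mathfrak b\le2$.
\end{itemize}
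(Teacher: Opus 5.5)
The paper does not reprove this theorem. It quotes P\'al, whose argument uses the winding quotient and a formal immersion at the infinite place. Your outline follows Mazur's finite-place template instead, and it has genuine gaps at Steps~2 and~4.

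\textbf{Step~2.} For a finite place $v\ne\mathfrak p$, the point $x$ reduces to a cusp modulo $v$ exactly when $v(j(\phi))<0$, i.e.\ when $\phi$ fails to have potentially good reduction at $v$. The size of $k_v$ does not decide this. Excluding a non-cuspidal specialisation at a prescribed place is a genuine statement about the isogeny character of $\phi$; this is what Ishii's Frobenius-polynomial arguments, used in Section~4, provide. You give no such argument.

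Your $\mathfrak p$-adic alternative conflates two different steps of Mazur's proof. Reduction modulo $\mathfrak p$ only tells you which component of the special fibre contains the specialisation of $x$; in Mazur's proof that is what yields $f(x)=0$. It does not make $x$ and $c_\infty$ specialise to the same point, which is what the formal immersion step needs. The route through $\infty$ points toward P\'al's argument. But it requires the analytic set-up at $\infty$ (compare Lemma~\ref{lem:infinity}) and a reason why $x$ and a cusp lie in the same residue disc there, and you supply neither.

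\textbf{Step~4.} You rightly flag this as the crux, but it is left as an expectation, and the classical criterion does not carry over. In the Drinfeld setting the pulled-back differentials $u^*\omega$ may vanish at the cusps, so the first-order map on cotangent spaces can be zero. The classical reduction of the formal immersion property to a statement about $a_1,a_2$ and $\mathbb T/I\otimes\mathbf F_\ell$ is therefore not available as stated; compare Armana's criterion, which needs additional hypotheses involving Hecke operators and Drinfeld modular forms.

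The paper's remedy is to pass to the first nonvanishing jet of order $m$ (Theorem~\ref{thm:jetdivisor}). This gives the needed nondegeneracy only at places outside $\operatorname{Supp}D_m$, where $\deg D_m\le g_{\mathfrak p}$. For $q=2$ and $d\ge4$ one has $g_{\mathfrak p}\ge4=\deg(T)+\deg(T+1)+\deg(T^2+T+1)$. So that bound does not rule out every place of degree at most $2$ lying in the degeneracy locus. This is why the paper never relies on a specific good place. It bounds the total degree of places of potentially bad reduction and then forces many such places via isogeny characters, and even that counting is carried out only for $q\ge3$.

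\textbf{Torsion statement.} Your deduction is fine when $\mathfrak a$ has a prime factor of degree $\ge3$, but the remaining cases are only a plan. Uniform boundedness of $\mathfrak p$-primary torsion does not give the sharp inequality $\deg\mathfrak a+\deg\mathfrak b\le2$. You would still have to identify the finitely many residual moduli problems built from $T$, $T+1$ and $T^2+T+1$ and determine their $F$-rational points.
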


P\'al's proof of the nonexistence of rational points uses
the winding quotient of $\operatorname{Jac}(X_0(\mathfrak p))$
and a variant of Mazur's formal immersion method at the
infinite place, rather than at finite places as in
Mazur's original argument.

Armana used Drinfeld modular forms and the winding quotient
of $\operatorname{Jac}(X_0(\mathfrak p))$ to study rational points
on $Y_1(\mathfrak p)$.

\begin{theorem}[Armana {\cite[Th\'eor\`eme~1.4]{Arm12}}]
For any prime power $q$, the following statements hold.
\begin{itemize}
\item[(1)] If $\deg(\mathfrak{p})=3$, then $Y_1(\mathfrak p)$ has no rational
points over any extension of $\mathbf{F}_q(T)$ of degree at most two.
\item[(2)] If $\deg(\mathfrak{p})=4$, then $Y_1(\mathfrak p)(\mathbf{F}_q(T))=\emptyset$. 
\end{itemize}
\end{theorem}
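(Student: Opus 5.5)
This is Armana's theorem, quoted from \cite{Arm12}; we do not reprove it in full, but outline how the argument goes. The strategy follows Mazur's pattern as transplanted by P\'al. First, reduce rational points on $Y_1(\mathfrak p)$ to rational points on $Y_0(\mathfrak p)$ with extra structure. Then map $X_0(\mathfrak p)$ to a quotient $J_e$ of $J_0(\mathfrak p)=\operatorname{Jac}(X_0(\mathfrak p))$ with finite Mordell--Weil group, namely the winding quotient. Finally, derive a contradiction from a formal immersion at the cusp $\infty$.

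\emph{Step 1: reduction to the cusp.} Let $(\phi,P)$ be a point of $Y_1(\mathfrak p)$ over $K$, where $[K:\mathbf F_q(T)]\le 2$ if $d=3$ and $K=\mathbf F_q(T)$ if $d=4$. The cyclic submodule generated by $P$ gives a point $x\in Y_0(\mathfrak p)(K)$. Since $\phi$ has a rational $\mathfrak p$-torsion point, the Galois action on the $\mathfrak p$-torsion has trivial character on the subgroup. Using the Tate uniformization at $\infty$ and the analysis of the reduction of $\phi$ at $\infty$, one shows that $x$ reduces to the cusp $\infty$ (or to $0$, after applying the Atkin--Lehner involution $w_{\mathfrak p}$) modulo the place $\infty$. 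The point is that a rational $\mathfrak p$-torsion point forces potentially multiplicative (Tate) reduction at $\infty$, with the rational point lying in the $\mathfrak p$-torsion of the Tate lattice side.

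\emph{Step 2: finiteness of the winding quotient.} Define $J_e=J_0(\mathfrak p)/I_eJ_0(\mathfrak p)$, where $I_e$ is the annihilator in the Hecke algebra of the winding element. Via the function-field analogue of Kolyvagin--Logachev and Kato (results of Tamagawa, P\'al, and Gekeler--Reversat on $L$-values of harmonic cochains), $J_e(K)$ is finite for the relevant $K$. Consider the map $f\colon X_0(\mathfrak p)\to J_e$ given by $y\mapsto[y-\infty]$. Because $J_e(K)$ is torsion, $f(x)$ is a torsion point. One then checks, using that $x$ and $\infty$ have the same reduction at $\infty$ and that reduction is injective on torsion, that $f(x)=f(\infty)=0$ in the special fibre. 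For $d=3$ over quadratic $K$, one uses $f^{(2)}$ on $X_0(\mathfrak p)^{(2)}$, the symmetric square, in the same way.

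\emph{Step 3: formal immersion, the main obstacle.} One must show that $f$ (respectively $f^{(2)}$) is a formal immersion at $\infty$ in the fibre at the place $\infty$. This amounts to showing that the cotangent map is surjective, i.e.\ that there exist forms in the winding ideal's image, namely Drinfeld cusp forms of weight $2$ and type $1$ expanded at $\infty$, whose first (respectively first two) $t$-expansion coefficients are independent. Armana's contribution is precisely this computation. One writes harmonic cochains as combinations of Hecke translates of the winding element and computes their Fourier coefficients explicitly, e.g.\ $T_{\mathfrak m}e$ for $\deg\mathfrak m\le 1$. The rank conditions then reduce to nonvanishing of explicit matrices, which is feasible only in small degree; this is why the theorem is limited to $d=3,4$. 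Once the formal immersion is established, the standard Mazur argument gives $x=\infty$, contradicting $x\in Y_0(\mathfrak p)$.
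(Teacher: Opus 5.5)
The paper gives no proof of this statement. It quotes it from \cite{Arm12}, saying only that Armana works with Drinfeld modular forms and the winding quotient of $\operatorname{Jac}(X_0(\mathfrak p))$. Your outline is consistent with that description. Like the paper, it leaves the decisive formal-immersion computation to Armana, so it is a sketch rather than a proof. Two of its intermediate claims, however, are wrong as written.

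\emph{Step~2.} You assert that $J_e(K)$ is finite for quadratic $K$. For a separable quadratic $K$ this is not available: it amounts to the quadratic twist of $J_e$ by $K/F$ having Mordell--Weil rank zero, i.e.\ to nonvanishing of a twisted central $L$-value. The winding-quotient construction does not provide that. It is also not needed. The symmetric-square argument uses only the finiteness of $J_e(F)$, applied to $f^{(2)}(x+x^{\sigma})$ with $x+x^{\sigma}\in X_0(\mathfrak p)^{(2)}(F)$. You then have to control which cusps $x$ and $x^{\sigma}$ approach; the configuration $c_0+c_\infty$ needs its own treatment. The output of the reduction step must also be $f^{(2)}(x+x^{\sigma})=0$ in $J_e(F)$, not merely in the special fibre. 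That holds because $J_e$ has split toric reduction at $\infty$, so the kernel of reduction on the identity component is $\widehat{\mathbf G}_m^{\dim J_e}(\mathfrak m_\infty)$, which is torsion-free in equal characteristic.

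\emph{Step~1.} The mechanism you give is false at the infinite place: a rational point of order $\mathfrak p$ does not force Tate-type behaviour of $\phi$ at $\infty$. Take $\zeta\in\mathbf F_{q^2}\setminus\mathbf F_q$. The lattice $\Lambda=A+A\zeta$ is stable under $\operatorname{Gal}(F_\infty(\zeta)/F_\infty)$, which sends $\zeta$ to $\operatorname{Tr}(\zeta)-\zeta$. Hence the Drinfeld module $\phi^{\Lambda}$ with exponential $e_\Lambda$ is defined over $F_\infty$ and has $j(\phi^{\Lambda})=0$. Yet $e_\Lambda(1/P)$ is an $F_\infty$-rational point with annihilator $\mathfrak p$.

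Nor must the rational point lie on the Tate-lattice side. At $\infty$ the torsion $e_A(P^{-1}A)\subset F_\infty$ coming from the rank-one sublattice $A$ is already rational, the opposite of how $\mu_p$ behaves on a Tate curve. What Step~1 actually needs is a statement about the point of $X_0(\mathfrak p)$, not about $\phi$. By Lemma~\ref{lem:cusp-inputs}(2), the residue disc of $c_\infty$ in the coordinate $y=Tx$ is $\{|z|_i>q^{-1}\}/\Gamma_\infty$. This is far larger than the locus where $|j|_\infty$ is large, and it contains $\zeta$. So the example does not hurt the argument, but it does refute your justification.

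\emph{Step~3.} This is not a routine check here. As the present paper stresses, the relevant differentials can vanish at the cusps. Moreover, the integral structure at $\infty$ is $\mathcal O_\infty\,\mathrm{d}y$ rather than $\mathcal O_\infty\,\mathrm{d}x$. So the unit condition on leading coefficients is exactly the nontrivial input you are importing from \cite{Arm12}.
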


Armana further established a general nonexistence criterion
under additional hypotheses involving Hecke operators
and Drinfeld modular forms
\cite[Th\'eor\`eme~1.5]{Arm12}.

Ishii subsequently investigated the isogeny characters of rank-two Drinfeld modules and used conditions arising from potentially good reduction at finite places to derive restrictions on the existence of rational cyclic isogenies. 

\begin{theorem}[Ishii {\cite[Corollary~5.3]{Ish22}}]\label{thm:ishii}
For any prime power $q$, if $\deg(\mathfrak{p})=4$, then $Y_0(\mathfrak p)(\mathbf{F}_q(T))=\emptyset$. 
\end{theorem}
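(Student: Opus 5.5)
Ishii's theorem states that $Y_0(\mathfrak p)(\mathbf F_q(T))=\emptyset$ when $\deg(\mathfrak p)=4$. I would argue by contradiction. Suppose $\phi$ is a rank-two Drinfeld $A$-module over $K=\mathbf F_q(T)$ carrying a $K$-rational cyclic $\mathfrak p$-subgroup $C$, where $\mathfrak p=(\wp)$ and $\deg\wp=4$. The Galois action on $C\cong A/\mathfrak p$ gives an isogeny character
\[
\rho\colon G_K\to (A/\mathfrak p)^\times\cong \mathbf F_{q^4}^\times .
\]
The dual isogeny $\phi/C$ carries the complementary character $\rho'$. These satisfy $\rho\rho'=\det\bar\rho_{\phi,\mathfrak p}$, which is the Carlitz character $\chi_{\mathfrak p}$ (up to a sign from the leading coefficient). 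The plan is to pin down $\rho$ by local information and show that no character with these properties exists.

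\textbf{Step 1 (local structure at finite places $v\neq\mathfrak p$).} After a bounded extension, $\phi$ acquires good reduction at every finite $v$, because $j$ may have poles only at finitely many places. The ramification of $\rho$ at $v\ne\mathfrak p$ then factors through a group of order dividing $q^2-1$, namely the automorphism group of the reduced module. If $v$ is unramified and $\phi$ has good reduction there, $\rho(\mathrm{Frob}_v)$ is a root of the characteristic polynomial $X^2-a_vX+b_v$ of Frobenius modulo $\mathfrak p$. Here $\deg a_v\le\deg v/2$ and $b_v=\epsilon\, v$.

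\textbf{Step 2 (at $\mathfrak p$ and at $\infty$).} At $\mathfrak p$, Raynaud-type theory for finite flat $A$-module schemes, in the Drinfeld setting via Taguchi, forces $\rho|_{I_{\mathfrak p}}$ to be either trivial or $\chi_{\mathfrak p}|_{I_{\mathfrak p}}$. This holds up to the bounded twist from Step 1, which is where potentially good reduction enters. At $\infty$ the Tate uniformization gives a similar dichotomy.

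\textbf{Step 3 (global class field theory).} $K$ has class number one and the units of $A$ are $\mathbf F_q^\times$. So after untwisting, $\rho$ must be a power of the Carlitz character times a character of conductor supported at $\infty$ of order dividing $q-1$. Concretely, $\rho$ or $\rho'$ has the form $\chi_{\mathfrak p}^{k}\cdot\psi$, with $k$ in a small set and $\psi$ of order dividing $q^2-1$. Evaluating at $\mathrm{Frob}_v$ for a degree-one place $v=(T-c)$ with $\mathfrak p\nmid v$ gives
\[
\rho(\mathrm{Frob}_{v})\equiv \zeta\,(T-c)^{k}\pmod{\mathfrak p},
\]
with $\zeta$ a root of unity. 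This must be a root of $X^2-a_vX+b_v$, where $\deg a_v\le 0$ and $\deg b_v=1$.

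\textbf{Step 4 (contradiction by degree).} Reducing mod $\wp$, the congruence $\zeta^2(T-c)^{2k}-a_v\zeta (T-c)^k+\epsilon(T-c)\equiv0 \pmod{\wp}$ involves polynomials of degree less than $4=\deg\wp$ when $k\in\{0,1\}$. So it must hold as an actual identity in $A$. That is impossible unless the degrees match, and they do not: the degree-$1$ term cannot cancel a degree-$0$ or degree-$2$ term. If no place of good reduction and degree one exists, one would use degree-two places with $\deg a_v\le1$, $\deg b_v=2$; the bound $2\cdot2=4$ is exactly where $\deg\mathfrak p=4$ is sharp.

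The main obstacle is Step 2 combined with the twist in Step 1: controlling $\rho|_{I_{\mathfrak p}}$ exactly when $\phi$ only has potentially good reduction at $\mathfrak p$. I would first try to reduce to the case $j(\phi)\in A$, by twisting and analyzing poles of $j$, before applying the flat group scheme argument.
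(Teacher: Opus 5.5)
There is a genuine gap. Your argument disposes of a case that never occurs and gives no argument for the case that always occurs.

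\textbf{What Steps 3--4 actually prove.} Once corrected, they show only that $\phi$ has potentially good reduction at \emph{no} prime $(T-c)$ of degree one. Two corrections are needed. First, by Lemmas~\ref{lem:Ishii} and~\ref{lem:power}, $\rho=\alpha\kappa^k$ ($\kappa$ the Carlitz character) with $\alpha^{q^2-1}=1$ and $k\equiv 0,1\pmod{q^2+1}$. So your $\zeta$ lies in $\mathbf F_{q^2}^\times$, not in $\mathbf F_q^\times$. An element of $\mathbf F_{q^2}\setminus\mathbf F_q$ is represented modulo $\wp$ by a polynomial of degree $2$ or $3$, so the congruence does not lift to an identity in $A$ and your degree comparison is invalid. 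The correct conclusion is $T\bmod\wp\in\mathbf F_q(\zeta)\subseteq\mathbf F_{q^2}$, contradicting $[A/\mathfrak p:\mathbf F_q]=4$. Second, if $\rho$ is ramified at $(T-c)$ you cannot evaluate it at Frobenius; you must pass to a totally ramified extension and take $(q^2-1)$-st powers, as in Lemma~\ref{lem:potent-good-lem}. This recovers the even-$d$ case of Lemma~\ref{lem:linear-good-reduction}.

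\textbf{Consequences for your premises.} It follows that $v_{(T-c)}(j(\phi))<0$ for every $c\in\mathbf F_q$, which contradicts several of your steps:
\begin{itemize}
\item At such places $\phi$ has potentially multiplicative (Tate--Drinfeld) reduction and acquires good reduction after no finite extension, contrary to Step~1.
\item Your planned reduction to $j(\phi)\in A$ is impossible, since these poles persist under twisting and isogeny.
\item In Step~3, $\alpha$ need not have conductor supported at $\infty$: it can ramify at every finite $v\neq\mathfrak p$ of bad reduction.
\item The inertia at $\mathfrak p$, which you flag as the main obstacle, is already controlled by Lemma~\ref{lem:Ishii}; the difficulty is not there.
\end{itemize}

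\textbf{The essential case is not handled.} The whole content of the statement is therefore the case of potentially multiplicative reduction at every degree-one place. Your only suggestion there, passing to degree-two places, fails. At $\deg H=2$ no degree comparison is available: $a_v$ may have degree $1$, $H^{2k}$ may have degree $4=\deg\wp$, and $\zeta$ has representatives of degree $2$ or $3$.

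The congruence can in fact be solvable. Suppose $q$ is even and $\wp=Q(w)$, with $Q$ an irreducible quadratic over $\mathbf F_q$ and $w$ a monic quadratic. For example, take $q=4$, $\mathbf F_4=\mathbf F_2(\omega)$, $Q=X^2+\omega X+1$, $w=T^2+T$. Then every prime of the form $H=w+c$ has $H\bmod\wp\in\mathbf F_{q^2}$. The choice $k=0$, $a_v=0$, $\zeta=(\epsilon H)^{1/2}\bmod\wp\in\mathbf F_{q^2}^\times$ solves $\zeta^2-a_v\zeta+\epsilon H\equiv0\pmod{\wp}$. Nor is potentially good reduction at degree-two primes guaranteed in the first place. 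So the proposal contains no argument for the case that matters.

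For comparison, the paper does not reprove this statement; it cites Ishii and remarks that its own method does not reach $d=4$. Theorem~\ref{thm:bad-reduction-bound} bounds the total degree of such bad places only by $g_{\mathfrak p}=q^2$, which the $q$ forced degree-one places do not exceed. The counting in Theorem~\ref{thm:even} does not force enough further bad places when $d=4$.
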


More recently, Ishii obtained the following result in his unpublished note.

\begin{theorem}[Ishii {\cite[Corollary~1.2]{IshiiNote}}]
Suppose that $q \geq 5$  is a prime number.
If $\deg(\mathfrak{p})=3$, then $Y_0(\mathfrak p)(\mathbf{F}_q(T))=\emptyset$. 
\end{theorem}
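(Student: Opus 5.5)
The plan is to argue via the isogeny character, following the strategy behind Theorem~\ref{thm:ishii}. Put $K=\mathbf F_q(T)$ and $A=\mathbf F_q[T]$, and suppose a point of $Y_0(\mathfrak p)(K)$ exists with $\deg\mathfrak p=3$. Since the curve is only a coarse moduli space, I first check that the point is represented by a rank-two Drinfeld module $\phi$ over $K$ with a $K$-rational cyclic subgroup $C\cong A/\mathfrak p$. A rational point of a coarse space may fail to lift, and in that case I would replace $\phi$ by a twist, using that automorphisms are $\mathbf F_q^\times$ or $\mathbf F_{q^2}^\times$. Galois acts on $C$ through the isogeny character $\chi\colon G_K\to(A/\mathfrak p)^\times\cong\mathbf F_{q^3}^\times$, and on $\phi[\mathfrak p]/C$ through $\chi'$. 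The key identity is $\chi\chi'=\det\bar\rho_{\phi,\mathfrak p}$, which is the mod-$\mathfrak p$ Carlitz character $\psi_{\mathfrak p}$, up to a character with values in $\mathbf F_q^\times$ coming from the leading coefficient of $\phi_T$. An $\mathbf F_q^\times$-twist $\phi_T\mapsto u^{-1}\phi_Tu$ with $u^{q-1}\in K$ changes $\chi$, $\chi'$ and $\det$ only by characters valued in $\mathbf F_q^\times$. So throughout I work modulo $\mathbf F_q^\times$, in the quotient $\mathbf F_{q^3}^\times/\mathbf F_q^\times$.

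The core computation uses a degree-one place $\ell=(T-c)$, $\ell\neq\mathfrak p$, where some $\mathbf F_q^\times$-twist of $\phi$ has good reduction. There the characteristic polynomial of $\mathrm{Frob}_\ell$ is $X^2-a_\ell X+\epsilon\,\ell$ with $\epsilon\in\mathbf F_q^\times$. By the Drinfeld analogue of the Hasse bound, $\deg a_\ell\le\deg\ell/2=1/2$, so $a_\ell\in\mathbf F_q$. Reducing mod $\mathfrak p$, the values $\chi(\mathrm{Frob}_\ell)$ and $\chi'(\mathrm{Frob}_\ell)$ are roots in $\mathbf F_{q^3}$ of a polynomial over $A/\mathfrak p$. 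More usefully, $\mathrm{Frob}_\ell$ acting on $\phi[\mathfrak p]$ is the reduction of the Frobenius endomorphism, which lies in the quadratic order $A[\pi_\ell]$. Its eigenvalue on $C$ is then a root of $X^2-a_\ell X+\epsilon(c')$, where $c'\in\mathbf F_q$ is the image of $\ell$ after identifying $\mathbf F_q[\pi]$-residues. Since $[\mathbf F_{q^2}\cap\mathbf F_{q^3}:\mathbf F_q]=1$, this forces $\chi(\mathrm{Frob}_\ell)\in\mathbf F_q^\times$ whenever the relevant residue field of the CM order is $\mathbf F_q$ or $\mathbf F_{q^2}$. Hence $\chi(\mathrm{Frob}_\ell)\chi'(\mathrm{Frob}_\ell)\in\mathbf F_q^\times$. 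On the other hand, $\psi_{\mathfrak p}(\mathrm{Frob}_\ell)\equiv T-c \pmod{\mathfrak p}$, which is not in $\mathbf F_q$ because $\deg\mathfrak p=3>1$. This is a contradiction once a single such place $\ell$ exists.

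The main obstacle is to guarantee such a place. Among the $q$ finite degree-one places, one is possibly $\mathfrak p$ (it is not, since $\deg\mathfrak p=3$), and the others may all be places of bad reduction. I would split these places into two kinds.

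\begin{enumerate}[label=(\roman*)]
\item \emph{Potentially multiplicative (Tate) reduction.} Tate uniformization makes $\bar\rho$ reducible with characters $1$ and the determinant, up to $\mathbf F_q^\times$ and unramified twists. Then $\chi|_{I_\ell}$ is trivial or equals $\psi_{\mathfrak p}|_{I_\ell}=1$ modulo $\mathbf F_q^\times$. The Frobenius value is either in $\mathbf F_q^\times$ or equal to $\ell\bmod\mathfrak p$, so this case either gives the contradiction directly or forces $\chi\equiv\psi_{\mathfrak p}$ locally, which I would then propagate.
\item \emph{Potentially good reduction not achieved by an $\mathbf F_q^\times$-twist.} Here the inertia acts through a group of order dividing $q^2-1$, through $\mathrm{Aut}=\mathbf F_{q^2}^\times$. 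Its image under $\chi$ lies in $\mathbf F_{q^2}^\times\cap\mathbf F_{q^3}^\times=\mathbf F_q^\times$. The hypothesis that $q\ge5$ is prime should be used here: it makes the ramification tame, with no $p$-part, so that $\chi$ modulo $\mathbf F_q^\times$ is unramified at $\ell$. The Frobenius eigenvalue computation then still applies after passing to a tame extension.
\end{enumerate}

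Finally, class field theory for $K$ pins down $\chi$ modulo $\mathbf F_q^\times$. It is unramified away from $\mathfrak p$ and $\infty$, and at $\mathfrak p$ its tame inertia restriction is a power $\omega^{k}$ of the fundamental character with $k$ restricted by the formal-module structure at $\mathfrak p$. So $\chi\equiv\psi_{\mathfrak p}^{k}$ with $k\in\{0,1\}$. Evaluating at one degree-one place then yields $T-c\in\mathbf F_q$ or its square-root analogue, which is impossible, and this contradiction completes the proof.
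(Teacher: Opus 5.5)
The paper contains no proof of this statement. It is quoted from Ishii's unpublished note, and the paper's own method handles odd $d$ only when $d>q$. Your proposal does not fill that gap, because its central step is false.

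\textbf{The core computation.} At $\ell=(T-c)$ with good reduction, $\mathrm{Frob}_\ell$ acts on $\phi[\mathfrak p]$ with characteristic polynomial $X^2-a_\ell X+\epsilon h$, where $h=(T-c)\bmod\mathfrak p\in\mathbf F_{q^3}\setminus\mathbf F_q$. Nothing lets you replace $h$ by a constant $c'\in\mathbf F_q$. The eigenvalue $z\in A/\mathfrak p$ on $C$ defines a homomorphism $A[\pi_\ell]\to A/\mathfrak p$. Its kernel is a prime \emph{above $\mathfrak p$} with residue field $A/\mathfrak p=\mathbf F_{q^3}$. The residue fields $\mathbf F_q,\mathbf F_{q^2}$ at primes above $\ell$ play no role, so there is no reason for $\chi(\mathrm{Frob}_\ell)$ to lie in $\mathbf F_q^\times$.

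Your own determinant identity gives $\chi\chi'(\mathrm{Frob}_\ell)=\epsilon h\notin\mathbf F_q$. So the ``contradiction'' is only with your false intermediate claim. What one really gets is that $z=\chi(\mathrm{Frob}_\ell)$ solves $z^2-a_\ell z+\epsilon h=0$ in $\mathbf F_{q^3}$. For odd $q$ this happens exactly when $a_\ell^2-4\epsilon h$ is a square in $\mathbf F_{q^3}$, so one good place gives no contradiction. A contradiction requires combining this with $\chi\equiv\psi_{\mathfrak p}^k$ modulo $\mathbf F_q^\times$ and constraints on $k$, as in Lemmas~\ref{lem:potent-good-lem} and~\ref{lem:linear-good-reduction}.

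\textbf{A sanity check.} Inertia at a place of potentially good reduction always acts through a group of order dividing $q^2-1$, which is prime to $p$. So tameness holds for every $q$, and your argument never genuinely uses that $q\ge5$ is prime. It would therefore also ``prove'' $Y_0(\mathfrak p)(F)=\emptyset$ for $q=3$ and $P=T^3-T\pm1$. There the paper points out an $F$-rational CM point, which has potentially good reduction at every finite place.

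\textbf{The remaining steps.} The formal-module structure at $\mathfrak p$ does not give $k\in\{0,1\}$. If $\phi$ has potentially good reduction at $\mathfrak p$, Lemma~\ref{lem:Ishii} only gives $(q+1)k\equiv n\pmod{q^2+q+1}$ with $0\le n\le q+1$. This allows, for instance, $2k\equiv1$, the supersingular configuration of Proposition~\ref{prop:supersingular}. The condition $k\in\{0,1\}$ is available only under potentially multiplicative reduction at $\mathfrak p$, and in that case Proposition~\ref{prop:exclude-powers} already finishes.

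Case~(i) is not an argument either. For $d=3$ one has $g_{\mathfrak p}=q$, exactly the number of degree-one places. So Theorem~\ref{thm:bad-reduction-bound} does not produce a degree-one place of potentially good reduction. At a degree-one place of bad reduction, Lemma~\ref{lem:bad-reduction-character} gives only $\chi^{q-1}(\mathrm{Frob}_\ell)\in\{1,h^{q-1}\}$, which is perfectly consistent. As in the proof of Lemma~\ref{lem:power}(3), it yields only that $h^n$ or $h^{q+1-n}$ lies in $\mathbf F_q^\times$. The degree argument excludes this only when $d>q$; for $d=3$ and $q\ge5$ it excludes no $n$ with $1\le n\le q$.

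Ruling out these configurations needs a genuinely new input on the elements $(T-c)\bmod\mathfrak p$, $c\in\mathbf F_q$, of $\mathbf F_{q^3}$. That is where the hypothesis that $q\ge5$ is prime must be used, and your proposal contains no such step.
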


As the preceding results show, for $q>2$, general results on the nonexistence of $\mathbf{F}_q(T)$-rational points on $Y_0(\mathfrak p)$ and $Y_1(\mathfrak p)$
were previously available mainly for small values of $d$.
In this paper, we extend the formal immersion method to prove the nonexistence of $\mathbf{F}_q(T)$-rational points on $Y_0(\mathfrak p)$ and $Y_1(\mathfrak p)$ in a substantially more general setting.

\begin{theorem}[{Theorem \ref{thm:even}, \ref{thm:main-y0}, and \ref{thm:rational-torsion}}]\label{thm:main}
Suppose that $q \neq 2$. (Note that $\mathbf{F}_q(T)$ may still have characteristic $2$.)   
For any nonzero prime ideal $\mathfrak{p} \subset \mathbf{F}_q[T]$, the following assertions hold.
\begin{enumerate}[label=(\arabic*)]
\item If $\deg(\mathfrak{p}) \ge 4$ is even or $d>q$, then $Y_0(\mathfrak p)(\mathbf{F}_q(T))=\emptyset$.
\item If $\deg(\mathfrak{p}) \ge 3$, then $Y_1(\mathfrak p)(\mathbf{F}_q(T))=\emptyset$.
\end{enumerate}
\end{theorem}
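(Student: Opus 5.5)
The plan follows Mazur's strategy as adapted by Pál: a formal immersion argument at the infinite place, combined with finiteness of the Mordell--Weil group of a suitable quotient of $J_0(\mathfrak p)=\operatorname{Jac}(X_0(\mathfrak p))$. Write $d=\deg\mathfrak p$, $K=\mathbf F_q(T)$, and let $J_e$ be the Eisenstein quotient (or its Atkin--Lehner minus part). By Tamagawa \cite[Theorem~5.7(i)]{Tam95} and Pál \cite[Theorem~3.5]{Pal10}, $J_e(K)$ is finite and generated by the image of the cuspidal divisor $0-\infty$. Suppose $x\in Y_0(\mathfrak p)(K)$, attached to a Drinfeld module $\phi$ with a $K$-rational cyclic $\mathfrak p$-subgroup.

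\textbf{Step 1: reduction at $\infty$.} Using Ishii-type arguments with the isogeny character, I will show that $\phi$ has potentially stable (Tate--Drinfeld) reduction at $\infty$. Moreover, after possibly applying the Atkin--Lehner involution $w_\mathfrak p$, the point $x$ specializes to the cusp $\infty$ on the Néron/stable model over $\mathcal O_\infty$, or over a tamely ramified extension of degree dividing $q-1$.

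This is where the hypotheses enter. If $\phi$ instead has potentially good reduction at $\infty$, the $\mathfrak p$-isogeny character restricted to inertia has order dividing $q^2-1$, but it must also be compatible with the character of $(A/\mathfrak p)^\times$ of order $q^d-1$. When $d$ is even, or when $d>q$, I expect a counting or degree argument on the possible inertia exponents to rule this out. The case of $Y_1$ will instead use the rational $\mathfrak p$-torsion point directly: it forces the character to be trivial on one line, which gives the result for all $d\ge3$.

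\textbf{Step 2: formal immersion.} Consider the map $f\colon X_0(\mathfrak p)\to J_e$, $y\mapsto$ class of $(y-\infty)$. Then $f(x)$ and $f(\infty)$ are points of the finite group $J_e(K)$ with the same reduction at $\infty$. Since torsion of order prime to $p$ injects under reduction, and $p$-torsion has to be handled separately, we get $f(x)=f(\infty)$, or at least equality in the formal group. Then I will show that $f$ is a formal immersion at the cusp $\infty$ in the fibre over the residue field at $\infty$. Following Pál, this amounts to showing that the Hecke algebra acting on $q$-expansions (Gekeler's $t$-expansions) of harmonic cochains, or of Drinfeld cusp forms, localized at the Eisenstein ideal, surjects onto the first coefficient. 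Equivalently, $1,T_{\mathfrak m}$ generate the cotangent space modulo $\mathfrak p$-torsion for small-degree $\mathfrak m$. The formal immersion then forces $x=\infty$, contradicting $x\in Y_0$.

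\textbf{Main obstacle.} I expect the hardest step to be the formal immersion for general $q\ge3$, which needs a nonvanishing statement for the first $t$-expansion coefficient of Eisenstein-localized forms. I would try Pál's Eisenstein ideal computations \cite{Pal07}, using the index $(q^d-1)/(q-1)$ up to the $q-1$ factor, to exhibit an explicit Hecke combination. Wild ramification in characteristic $2$, and $p$-torsion in $J_e(K)$, also need care; this is presumably why $q=2$ is excluded.
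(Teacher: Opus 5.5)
\textbf{There is a genuine gap.} Both load-bearing steps are unproved, and the mechanism proposed for Step~1 cannot work.

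\emph{Step 1.} There is no good/bad reduction dichotomy for $\phi$ at $\infty$, since $T$ is not $\infty$-integral. Moreover, $\lambda|_{G_{F_\infty}}$ is induced by the finite Galois action on the $\infty$-adic period lattice, so it has values in $\mathbf F_{q^2}^\times$ for every $\phi$. Inertia exponents at $\infty$ therefore carry no information about where $x$ specializes. Even a pole of $j$ at $\infty$ does not put $x$ in a cuspidal residue disc. For $q=3$ and $P=T^3-T\pm1$, the $F$-rational CM point (CM by $A[\sqrt{\pm P}]$) has $v_\infty(j)<0$. Yet it is the $W_{\mathfrak p}$-fixed point $z=1/\sqrt{\pm P}$, and $|z|_i=q^{-3/2}$ is maximal in its $\Gamma_0(\mathfrak p)$-orbit. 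Hence it lies neither in $\iota_\infty(\mathbb D)$ nor in its $W_{\mathfrak p}$-translate.

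\emph{Step 2.} The formal immersion at $c_\infty$ over $\mathcal O_\infty$ is exactly what is not available. The pulled-back Eisenstein differentials may vanish at the cusps, so the first nonzero jet of $u$ can have order $m>1$. Nothing forces its leading coefficient to be an $\infty$-adic unit. You flag this as the main obstacle but give no argument.

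\emph{The $Y_1$ step.} Your observation that ``$\lambda$ is trivial on a line'' yields no contradiction on its own.

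\emph{What is missing.} The missing idea is to abandon formal immersion at a prescribed place and instead bound the set of places where it fails. The paper's argument runs as follows.
\begin{enumerate}[label=(\roman*)]
\item It works at finite places $v\ne\mathfrak p$ with $v(j(x_0))<0$. There $x_0$ genuinely specializes to a cusp on Gekeler's model, so $u(x_0)=0$ because $\#B(F)$ is prime to $q$.
\item It treats the $m$-jet $\alpha_m$ as a morphism of bundles $\mathcal E_{\mathcal B}\to\mathcal L^{\otimes m}$ on $\mathbf P^1_{\mathbf F_q}$. Raynaud uniformization at $\infty$ is used only to extend this morphism across $\infty$.
\item Nefness of $\mathcal E_{\mathcal B}$ (ordinarity plus R\"ossler) gives $\deg D_m\le m\le g_{\mathfrak p}$. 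Off $\operatorname{Supp}D_m$ the order-$m$ immersion argument works. Hence $\sum_{v\ne\mathfrak p\ \mathrm{finite},\,v(j(x_0))<0}\deg v\le g_{\mathfrak p}$.
\end{enumerate}

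The hypotheses on $d$ then enter through Ishii's isogeny characters at \emph{finite} places.
\begin{enumerate}[label=(\roman*)]
\item For even $d$, one has $\lambda^{q^2-1}\in\{1,\kappa^{q^2-1}\}$. Counting degree-$(d-1)$ primes that satisfy $(\mathrm C_{\mathfrak p,q^2-1})$ produces too many bad places. The case $d=4$ is taken from Ishii, and $d=6$, $q\le5$ is checked by computer.
\item For odd $d$, the same count excludes $\lambda^{q-1}\in\{1,\kappa^{q-1}\}$, which disposes of $Y_1$.
\item For odd $d>q$, one is forced to $2k\equiv1$. This gives supersingular reduction at all primes of degree $<d$, and then a Hasse--Weil contradiction for $y^2=P(x)$.
\end{enumerate}
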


\begin{remark}
The case $\deg(\mathfrak{p})=4$ of Theorem~\ref{thm:main}(1) follows from
Ishii's result stated in Theorem~\ref{thm:ishii}. 
The proof given in the present paper does not establish the case $\deg(\mathfrak{p})=4$. 
\end{remark}

\begin{remark}
    When $q=2$, both assertions (1) and (2) of Theorem~\ref{thm:main}
follow from P\'al's theorem stated in Theorem \ref{thm:pal}.
Although many of the arguments in this paper remain valid for $q=2$, the assumption $q\ge 3$ is needed to establish certain inequalities. 
\end{remark}

We now briefly explain a key result in the proof of Theorem~\ref{thm:main}.
The usual formal immersion method relies on the nonvanishing of differentials at the cusps. 
In the function field settig, however, the relevant differentials may vanish at the cusps. 
We therefore extend the formal immersion method by considering higher-order terms in the formal expansions at the cusps. 
This approach yields the following upper bound for the sum of the degrees of the finite places at which the Drinfeld module associated with an $\mathbf{F}_q(T)$-rational point of $Y_0(\mathfrak p)$ does not have potentially good reduction. 

\begin{theorem}[{Theorem \ref{thm:bad-reduction-bound}}]\label{intro:bad-reduction-bound}
For any rational point $x_0\in Y_0(\mathfrak p)(\mathbf{F}_q(T))$, one has 
\[
\sum_{\substack{\text{$v$ finite},\ v\ne\mathfrak p\\
                 v(j(x_0))<0}}
\deg(v)
\le \text{the genus of $X_0(\mathfrak p)$}.
\]
Here, $j(x_0)\in \mathbf{F}_q(T)$ is the $j$-invariant of the Drinfeld module corresponding to $x_0$.  
\end{theorem}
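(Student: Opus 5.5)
We follow the Mazur--P\'al strategy of a formal immersion at a cusp, but keep track of higher-order vanishing so as to extract a counting inequality instead of a plain contradiction. Write $K=\mathbf F_q(T)$, $g$ for the genus of $X_0(\mathfrak p)$, and $J=\operatorname{Jac}(X_0(\mathfrak p))$. Take a rational point $x_0\in Y_0(\mathfrak p)(K)$. Let $S$ be the set of finite places $v\neq\mathfrak p$ with $v(j(x_0))<0$. At each $v\in S$ the Drinfeld module has (potentially) stable bad reduction, namely Tate--Drinfeld uniformization. Hence the reduction of $x_0$ modulo $v$ lies on a cusp of $X_0(\mathfrak p)$. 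Since $v\nmid\mathfrak p$, the curve $X_0(\mathfrak p)$ has good reduction at $v$, so this is a rational cusp of the smooth model over $\mathcal O_v$. The cusps of $X_0(\mathfrak p)$ are $0$ and $\infty$, and they are interchanged by the Atkin--Lehner involution $w_{\mathfrak p}$. After replacing $x_0$ by $w_{\mathfrak p}x_0$ if necessary, and splitting $S$ according to which cusp is hit, we may assume that a suitable subset $S'$ with $\sum_{S'}\deg v$ controlled by $\sum_S \deg v$ reduces to the cusp $\infty$. (If the splitting loses a factor, I would refine the argument to treat both cusps simultaneously.)

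Next consider the Eisenstein or winding quotient $J\to J_e$, for which $J_e(K)$ is finite by Tamagawa and P\'al, and the map
\[
f\colon X_0(\mathfrak p)\to J_e,\qquad x\mapsto \pi([x]-[\infty]).
\]
The point $f(x_0)$ is a torsion point of $J_e(K)$ by \cite[Theorem~3.5]{Pal10}; it lies in the image of the cuspidal group. At each $v\in S'$ the points $x_0$ and $\infty$ coincide modulo $v$. Torsion specialization is injective at good places of residue characteristic $p$ for prime-to-$p$ torsion, and the cuspidal torsion has order prime to $p$. It follows that $f(x_0)=0$, and hence $f(x_0)=f(\infty)$ as $K$-points. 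So $x_0$ lies in the fibre $f^{-1}(0)$.

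The key step is to bound how often $x_0$ can meet $\infty$ integrally. Instead of asking that $f$ be a formal immersion at $\infty$, we pull back the cotangent space: $f^*H^0(J_e,\Omega^1)\subset H^0(X_0(\mathfrak p),\Omega^1)$. Choose a nonzero differential $\omega$ in this image, for instance the one coming from a suitable Hecke-eigen-harmonic cochain whose $t$-expansion at $\infty$ begins with the lowest possible power. Then $\operatorname{ord}_\infty\omega\le 2g-2$, and a Weierstrass-gap argument refines this to a first nonzero coefficient at some order $m\le g-1$. Working in the formal group of $J_e$ over $\mathcal O_v$, the formal logarithm along $\omega$ evaluated at $f(x_0)=0$ gives an identity $\int_\infty^{x_0}\omega=0$ in the completed local ring. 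Expanding in the local parameter $t$ at $\infty$ gives an equation
\[
a_m\,t(x_0)^{m+1}/(m+1)+\dots=0 .
\]
Globally, the rational function $t(x_0)\in K$ has a zero at every $v\in S'$, of order related to $-v(j(x_0))$. One then uses a global rationality statement in place of local arguments: the function $h=\sum_{k}a_k t^{k+1}/(k+1)$, truncated at a suitable pole order, is a rational function on $X_0(\mathfrak p)$ of degree at most $g$. Alternatively, one uses the Riemann--Roch divisor $D=\sum_{S'}v$ pulled back, and the constraint is that the divisor of zeros of $x_0^*$ of something of degree $\le g$ contains $\sum_{S'}v$. Counting degrees then gives $\sum_{v\in S}\deg v\le g$.

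The main obstacle is exactly this last step: making the higher-order formal immersion quantitative and global. The vanishing order $m$ at the cusp must be bounded by $g$, in characteristic $p$, where divided powers $t^{m+1}/(m+1)$ can fail. The local vanishing at each $v\in S$ must also be converted into a single global degree count. I would try first to avoid logarithms entirely. The plan is to use Riemann--Roch to produce a rational function $\phi$ on $X_0(\mathfrak p)$, with poles only at $\infty$ of order $\le g$, that factors through $f$ on the relevant fibre. Then $\phi(x_0)\in K$ would be forced to be a polynomial-like element vanishing at all $v\in S$ with height at most $g$, which yields the inequality directly.
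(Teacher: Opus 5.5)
Your reduction steps are fine. At each $v\in S$ the point $x_0$ reduces to a cusp, and $u(x_0)=0$ (your $f(x_0)=0$; the paper's $B,u$ are your $J_e,f$) because $B(F)$ is cuspidal of order prime to $p$.

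Your worry about splitting $S$ is harmless. If $x_0$ met $c_\infty$ at one place and $c_0$ at another, you would get $u(c_0)=u(x_0)=0$, which is impossible since $u(c_0)$ generates $B(F)\neq 0$. The paper instead applies $W_{\mathfrak p}$ place by place. That is legitimate because the obstruction locus it uses does not depend on $x_0$.

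The genuine gap is the step you flag yourself: you have no mechanism turning vanishing at individual places $v\in S$ into a global degree bound, and the routes you suggest fail.
\begin{itemize}
\item In equal characteristic there is no formal logarithm over $\mathcal O_v$, since it needs division by all integers. So $\int_\infty^{x_0}\omega=0$ and the expansion $a_m t^{m+1}/(m+1)+\cdots$ are not available, and a truncated logarithm is not a rational function on $X_0(\mathfrak p)$.
\item The Riemann--Roch variant fails too. A function with poles only at the cusp takes at $x_0$ a value with poles, not zeros, at the places of $S$. Its height is governed by the height of $x_0$, on which you have no bound. Any count must be uniform in $x_0$, so it has to come from a fixed divisor on the base $\mathbf P^1_{\mathbf F_q}$.
\item The Weierstrass-gap bound is unjustified. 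You need small vanishing order inside the subspace $u^*H^0(B,\Omega^1)$, not among all holomorphic differentials. Moreover, in characteristic $p$ the relevant invariant is the vanishing order $m$ of the coordinate functions $u^*z_i$, for which one only has $\operatorname{ord}_{c_\infty}(u^*\omega)\ge m-1$.
\end{itemize}

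The missing idea is as follows. The order-$m$ leading coefficients of $u$ at $c_\infty$ form a morphism of vector bundles $\alpha_m\colon\mathcal E_{\mathcal B}\to\mathcal L^{\otimes m}$ on all of $\mathbf P^1_{\mathbf F_q}$. Here $\mathcal E_{\mathcal B}$ is the Hodge bundle of the N\'eron model of $B$ and $\mathcal L=\mathcal O(\infty)\,\mathrm{d}x$.
\begin{itemize}
\item Integrality at finite places comes from the integral model, whose completed local ring at the cusp is $\mathcal O_v[[x]]$, together with the N\'eron property.
\item Integrality at $\infty$ with respect to $\mathrm{d}y=T\,\mathrm{d}x$ comes from lifting $u$ on the disk at the cusp to the Raynaud torus of $B$ and a Gauss-norm argument.
\item Write $\operatorname{im}\alpha_m=\mathcal L^{\otimes m}(-D_m)$. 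For finite $v\notin\operatorname{Supp}D_m$, some $\mathcal O_v$-combination $z$ of formal coordinates satisfies $u_v^*z=x^m(1+xh(x))$. Then $u(x_0)=0$ forces $x_0=c_\infty$. This replaces your logarithm and gives $S\subseteq\operatorname{Supp}D_m$.
\item The global count is $\deg D_m=m-\deg(\operatorname{im}\alpha_m)\le m$, because $\mathcal E_{\mathcal B}$ is nef. Nefness follows from R\"ossler's theorem, $B$ being ordinary by its purely toric reduction at $\infty$.
\item Finally $m\le g$, because $\pi\circ W_{\mathfrak p}=-\pi$ makes $u^*\omega$ vanish to order at least $m-1$ at both cusps, so $2(m-1)\le 2g-2$.
\end{itemize}
Nefness of the Hodge bundle, the $\mathcal O(\infty)$-twist at the infinite place, and the Atkin--Lehner symmetry are exactly the inputs your proposal lacks.
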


By combining Theorem~\ref{intro:bad-reduction-bound} with a refinement of Ishii's results on isogeny characters \cite{Ish22}, we establish our main results on rational points of Drinfeld modular curves.

\subsection*{Acknowledgements} 
The author is deeply grateful to Shun Ishii for explaining
the difficulties in applying the formal immersion method
to Drinfeld modular curves and how potentially good reduction
at finite places of small degree can be used to establish
the nonexistence of rational points on Drinfeld modular curves.
The idea of counting the finite places at which a Drinfeld module
does not have potentially good reduction was inspired
by his explanations. 
The author also wishes to express his appreciation to Sho Yoshikawa for giving the initial impetus for this research. 
The author was supported by JSPS KAKENHI Grant Number JP24K16886.

\subsection*{Declaration of AI Use}

During the preparation of this manuscript, the authors used ChatGPT to assist with language editing and to improve the clarity of the mathematical exposition. 
All AI-generated suggestions were reviewed and, where necessary, revised by the author. 
For the computer-assisted calculation in Theorem~\ref{thm:even}, the author used AI to assist in writing the code and carrying out the computation. 
The author takes full responsibility for the mathematical content of this manuscript. 

\subsection*{Notation} 
Let $q$ be a prime power. 
Put $A := \mathbf{F}_q[T]$ and $F := \mathbf{F}_q(T)$. 
In this paper, we assume that $q \geq 3$. 
For a closed point $v$ of $\mathbf P^1_{\mathbf F_q}$, let $F_v$ be the
completion of $F$, let $\mathcal O_v$ be its valuation ring, and let $k_v$
be its residue field. Valuations are normalized to have value group
$\mathbf Z$. We put $\deg v :=[k_v:\mathbf F_q]$ and normalize
$|T|_\infty=q$. Let $\mathbf C_\infty$ be the completion of an algebraic
closure of $F_\infty$.

Let $\mathfrak{p} = (P)$ be a nonzero prime ideal of $A$,
where $P(T)$ is monic and irreducible, and put $d := \deg(\mathfrak{p})$. 
Suppose that $d \geq 3$. 

\section{Drinfeld Modular Curves}\label{sec:eisenstein} 

\subsection{Preliminaries}\label{sec:prelim}
We first recall well-known facts about Drinfeld modular curves.
Let $\tau$ denote the $q$-power Frobenius operator.
A rank-two Drinfeld $A$-module over $F$ has the form
\[
\phi_T=T+g\tau+\Delta\tau^2,
\qquad g\in F,\qquad\Delta\in F^\times,
\qquad j(\phi) := g^{q+1}/\Delta.
\]
Let $Y_0(\mathfrak p)$ and $Y_1(\mathfrak p)$ denote the coarse moduli curves over $F$ parametrizing rank-two Drinfeld $A$-modules equipped with a cyclic $\mathfrak p$-subgroup and a point of exact order $\mathfrak p$, respectively. 

\begin{lemma}\label{lem:descent}
The following assertions hold.
\begin{enumerate}[label=(\arabic*)]
\item Every point of $Y_0(\mathfrak p)(F)$ is represented by a pair $(\phi,C)$,
where $\phi/F$ is a rank-two Drinfeld $A$-module and
$C\subset\phi[\mathfrak p]$ is a cyclic subgroup defined over $F$.
\item Every point of $Y_1(\mathfrak p)(F)$ is represented by a pair $(\phi,a)$,
where $\phi/F$ is a rank-two Drinfeld $A$-module and
$a\in\phi[\mathfrak p](F)\setminus\{0\}$.
\end{enumerate}
\end{lemma}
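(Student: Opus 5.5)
We prove both parts by Galois descent from $F^{\mathrm{sep}}$, using the fact that every rank-two Drinfeld module over $F^{\mathrm{sep}}$ with $j$-invariant in $F$ has a model over $F$. Let $G=\operatorname{Gal}(F^{\mathrm{sep}}/F)$. A point $x\in Y_0(\mathfrak p)(F)$ (resp.\ $Y_1(\mathfrak p)(F)$) is represented over $F^{\mathrm{sep}}$ by a pair $(\psi,C')$ (resp.\ $(\psi,a')$) whose isomorphism class is $G$-stable. Here we use that the coarse moduli space is defined over $F$, and that its $F^{\mathrm{sep}}$-points are the isomorphism classes of such pairs over $F^{\mathrm{sep}}$; working with separable closures suffices because the level structures are étale, as $\mathfrak p$ is prime to the characteristic of $F$ viewed as an $A$-field via $T\mapsto T$. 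In particular $j(\psi)\in F$, so the first step is to choose a model $\phi/F$ with $\phi\cong\psi$ over $F^{\mathrm{sep}}$.

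\textbf{Choosing the model.} If $j\neq0$, write $j=g^{q+1}/\Delta$ with $g\in F^\times$ and take $\phi_T=T+\tau+j^{-1}\tau^2$. If $j=0$, take $\phi_T=T+\tau^2$. Two modules with the same $j$ differ by a twist by $\operatorname{Aut}(\phi)$. This group is $\mathbf F_q^\times$ when $j\neq 0$ and $\mathbf F_{q^2}^\times$ when $j=0$.

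\textbf{Descending the level structure.} Transport the structure to $\phi$ via an isomorphism $\psi\cong\phi$ over $F^{\mathrm{sep}}$, obtaining $C\subset\phi[\mathfrak p]$ (resp.\ $a\in\phi[\mathfrak p]$). $G$-stability of the class means that for each $\sigma\in G$ there is $u_\sigma\in\operatorname{Aut}(\phi)$ with $\sigma(C)=u_\sigma C$ (resp.\ $\sigma(a)=u_\sigma a$).

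\textbf{Case (1), $Y_0(\mathfrak p)$.} Every automorphism is a scalar $u\in\mathbf F_{q^2}^\times\subset\operatorname{End}(\phi)$. For $j\neq0$ the scalars lie in $\mathbf F_q^\times\subset A$, so $uC=C$ automatically and $C$ is $G$-stable, hence defined over $F$. For $j=0$, $\phi[\mathfrak p]$ is a module over $\mathbf F_{q^2}[T]/\mathfrak p$, and $uC$ may differ from $C$. Here we would instead twist $\phi$ by a cocycle: the map $\sigma\mapsto u_\sigma$ is determined only modulo the stabilizer of $C$, which contains $\mathbf F_q^\times$. We first try the following. If $C$ is an $\mathbf F_{q^2}$-line, it is stable under all of $\operatorname{Aut}(\phi)$ and we are done. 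Otherwise the stabilizer of $C$ in $\operatorname{Aut}(\phi)$ is exactly $\mathbf F_q^\times$, so $u_\sigma$ is well defined modulo $\mathbf F_q^\times$. One then checks that it gives a $1$-cocycle $G\to\mathbf F_{q^2}^\times/\mathbf F_q^\times$ and lifts it to a cocycle valued in $\mathbf F_{q^2}^\times$, using Hilbert 90 for $H^2$-obstructions or a direct Kummer argument. Twisting by the lift yields $\phi'/F$ on which $C$ is $G$-stable.

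\textbf{Case (2), $Y_1(\mathfrak p)$.} Since $a\neq0$ and $\phi[\mathfrak p]$ is torsion-free over $\mathbf F_{q^2}$-scalars acting through a field, the stabilizer of $a$ in $\operatorname{Aut}(\phi)$ is trivial. Thus $u_\sigma$ is uniquely determined and is automatically a $1$-cocycle $G\to\operatorname{Aut}(\phi)$. Twisting $\phi$ by this cocycle (Hilbert 90 gives such twists as $\phi'=c^{-1}\phi c$ with $c^{\,q^i-1}$-type descent data) produces $\phi'$ over $F$ with $a'\in\phi'[\mathfrak p](F)$ nonzero.

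\textbf{Main obstacle.} The hardest point is the lifting of the projective cocycle in case (1) when $j=0$. The fallback is to note that we may replace $C$ by a twist directly: $\phi$ with $j=0$ admits twists $\phi_T=T+\Delta\tau^2$ for any $\Delta\in F^\times$. The cocycle class in $H^1(G,\mathbf F_{q^2}^\times/\mathbf F_q^\times)$ can then be realized by a choice of $\Delta$ through Kummer theory, since $\mathbf F_{q^2}^\times/\mathbf F_q^\times$ is cyclic of order $q+1$ and the twists by $\Delta$ correspond to $F^\times/F^{\times(q^2-1)}$.
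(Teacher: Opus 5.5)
Your descent argument is correct, and it is more self-contained than the paper's treatment. The paper proves (1) only by citing \cite[Lemma~5.4]{Ish22} and gives no argument for (2), treating it as standard.

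Your case $j\neq0$ in (1) is complete: there $\operatorname{Aut}(\phi)=\mathbf F_q^\times\subset A$ preserves every $A$-submodule. Your treatment of (2) is also complete: the stabilizer of $a$ is trivial, so $\sigma\mapsto u_\sigma$ is an honest cocycle in $\mu_{q-1}$ or $\mu_{q^2-1}$, and Hilbert~90 writes it as $\sigma(c)/c$. You should add one point to the setup: isomorphisms between modules with equal $j$ are given by $(q-1)$-st or $(q^2-1)$-st roots, so they are defined over $F^{\mathrm{sep}}$, not just over $\overline F$.

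The step you leave tentative is lifting the class of $\sigma\mapsto u_\sigma$ in $H^1(G,\mathbf F_{q^2}^\times/\mathbf F_q^\times)$ when $j=0$. It does go through, but not because the obstruction group vanishes: $H^2(G,\mathbf F_q^\times)=\operatorname{Br}(F)[q-1]$ is nonzero. The precise reason is as follows.
\begin{itemize}
\item The map $u\mapsto u^{q-1}$ identifies $\mathbf F_{q^2}^\times/\mathbf F_q^\times$ with $\mu_{q+1}$ as Galois modules.
\item Under Kummer theory, the induced map $H^1(G,\mu_{q^2-1})\to H^1(G,\mu_{q+1})$ is the natural projection $F^\times/F^{\times(q^2-1)}\to F^\times/F^{\times(q+1)}$, which is surjective.
\item Equivalently, the connecting map vanishes because $\operatorname{Br}(F)[q-1]\to\operatorname{Br}(F)[q^2-1]$ is injective.
\end{itemize}
Lifting $u_\sigma$ to $\xi_\sigma=\sigma(c)/c$, the pair $(c^{-1}\phi c,\,c^{-1}C)$ is then defined over $F$.

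You can also avoid cohomology in this case. If $j(\phi)=0$ and $C$ is not $\mathbf F_{q^2}$-stable, then $j(\phi/C)\neq0$. Otherwise $\phi\to\phi/C\cong\phi$ would be an endomorphism lying in the commutative ring $\mathbf F_{q^2}[T]$ with kernel $C$, forcing $C$ to be $\mathbf F_{q^2}$-stable. So $W_{\mathfrak p}x$ falls under the $j\neq0$ case, and applying $W_{\mathfrak p}$ to the resulting $F$-rational pair recovers a model of $x$ over $F$.
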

\begin{proof}
For assertion (1), see \cite[Lemma~5.4]{Ish22} for example.
\end{proof}

\begin{lemma}\label{lem:genus}
The smooth projective compactification $X_0(\mathfrak p)$ of $Y_0(\mathfrak{p})$ has two $F$-rational cusps and the Atkin--Lehner involution $W_{\mathfrak p}$
interchanges them.
Moreover, the genus of  $X_0(\mathfrak p)$  is given by 
\[
g_{\mathfrak p} := 
\begin{cases}
(q^d-q)/(q^2-1), & \text{$d$ is odd},
\\
(q^d-q^2)/(q^2-1), & \text{$d$ is even}.
\end{cases}
\]
\end{lemma}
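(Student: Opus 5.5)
We plan to derive everything from the analytic description of $X_0(\mathfrak p)$ as the compactified quotient $\Gamma_0(\mathfrak p)\backslash\Omega$ of the Drinfeld upper half-plane. We then read off the cusps and the genus from the quotient graph $\Gamma_0(\mathfrak p)\backslash\mathcal T$ of the Bruhat--Tits tree of $\mathrm{PGL}_2(F_\infty)$, following Gekeler's formulas.

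\textbf{Cusps.} The cusps of $X_0(\mathfrak p)$ over $\mathbf C_\infty$ are in bijection with $\Gamma_0(\mathfrak p)\backslash\mathbf P^1(F)$. Using the decomposition $\mathbf P^1(F)=\mathrm{GL}_2(A)\cdot\infty$ and the identification $\mathrm{GL}_2(A)/\Gamma_0(\mathfrak p)\cong\mathbf P^1(A/\mathfrak p)$, these cusps correspond to the double cosets $\Gamma_0(\mathfrak p)\backslash\mathbf P^1(A/\mathfrak p)/B(A)$, where $B(A)$ is the upper-triangular stabilizer of $\infty$. Since $\mathfrak p$ is prime, $B(A)$ has exactly two orbits on $\mathbf P^1(A/\mathfrak p)$: the point $\infty$ and the affine line. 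So there are exactly two cusps, $[\infty]$ and $[0]$.

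\textbf{Rationality and $W_{\mathfrak p}$.} The Atkin--Lehner matrix $\begin{pmatrix}0&-1\\P&0\end{pmatrix}$ normalizes $\Gamma_0(\mathfrak p)$ and sends $\infty$ to $0$, so $W_{\mathfrak p}$ swaps the two cusps. For rationality, $\mathrm{Gal}(F^{\mathrm{sep}}/F)$ permutes the cusps and commutes with $W_{\mathfrak p}$. Moreover, the cusp $[\infty]$ is the Tate--Drinfeld module at infinity with the canonical subgroup, and it is defined over $F$; alternatively, the two cusps have different ramification indices over $X(1)$, namely $1$ and $q^d$. Hence the Galois action fixes each cusp, and both are $F$-rational.

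\textbf{Genus.} We apply Riemann--Hurwitz to the covering $X_0(\mathfrak p)\to X(1)\cong\mathbf P^1_j$, which has degree $q^d+1=[\mathrm{GL}_2(A):\Gamma_0(\mathfrak p)]$. We must be careful with wild ramification. Over the cusp, the ramification indices are $1$ and $q^d$, and the wild part of the different is computed via the graph rather than naively. The cleaner route is Gekeler's formula: the genus equals the first Betti number of the quotient graph $\Gamma_0(\mathfrak p)\backslash\mathcal T$. We count this as $g=1-\chi$, weighting vertices and edges by the orders of their stabilizers modulo $\mathbf F_q^\times$. This gives
\[
g=1+\frac{q^d+1}{q^2-1}\cdot(\text{Euler correction})-(\#\text{cusps})-\epsilon_2 ,
\]
where $\epsilon_2$ counts elliptic points. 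These are the vertices with stabilizer containing $\mathbf F_{q^2}^\times$, and they exist precisely when $\mathfrak p$ has no roots of $T^2$-type, i.e.\ according to whether $\mathfrak p$ splits in $\mathbf F_{q^2}[T]$. Concretely, the number of elliptic points is $1+(-1)^d$ up to normalization, which is $0$ for $d$ odd and $2$ for $d$ even.

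Assembling these contributions gives $(q^d-q)/(q^2-1)$ for $d$ odd and $(q^d-q^2)/(q^2-1)$ for $d$ even. This matches Gekeler's formula $g(X_0(\mathfrak n))$ specialized to a prime $\mathfrak n$.

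The main obstacle is the bookkeeping of the elliptic and cuspidal contributions in the Euler characteristic, together with the wild-ramification subtlety. In practice we will cite Gekeler (\emph{Drinfeld Modular Curves}, LNM 1231) and Gekeler--Nonnengardt for the genus formula and the rationality of the cusps. We would then only verify the specialization to prime level, which reduces to the orbit count above and the parity of $d$ governing $\epsilon_2$.
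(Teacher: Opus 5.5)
The paper gives no argument of its own for this lemma: it cites \cite{Gek86}. Your cusp argument is fine. The two $B(A)$-orbits on $\mathbf P^1(A/\mathfrak p)$, of sizes $1$ and $q^d$, give the two cusps and their ramification indices, and the Atkin--Lehner matrix swaps them. One small correction: over the imperfect field $F$, ``Galois fixes each cusp'' does not by itself give $F$-rationality. Argue instead that $c_\infty$ lies over the rational point $j=\infty$ with ramification index $1$, so it is $F$-rational, and that $c_0=W_{\mathfrak p}(c_\infty)$ with $W_{\mathfrak p}$ defined over $F$.

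\textbf{The gap is in the genus part.} The count of vertices minus edges of $\Gamma_0(\mathfrak p)\backslash\mathcal T$, weighted by inverse stabilizer orders modulo $\mathbf F_q^\times$, is not $1-g$. It is an orbifold characteristic. For $\mathrm{PGL}_2(A)$, with its half-line quotient, it equals $-1/(q^2-1)$, and it is multiplicative in the index, so here it is $-(q^d+1)/(q^2-1)$. The correct genus satisfies
\[
g_{\mathfrak p}=1+\frac{q^d+1}{q^2-1}-\frac{q}{q-1}-\varepsilon\,\frac{q}{q+1},
\qquad
\varepsilon=\begin{cases}0,& d\text{ odd},\\ 1,& d\text{ even}.\end{cases}
\]
So the corrections are not $-1$ per cusp and $-1$ per elliptic point. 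The term $q/(q-1)$ is present even for odd $d$, where there are no elliptic points. It comes from the ends and from vertices with nontrivial, non-elliptic stabilizers: the Borel subgroup of $\mathrm{GL}_2(\mathbf F_q)$ at the image of the standard vertex, and split tori and unipotent groups further out. You can only account for these by knowing the actual shape of the quotient graph. With ``Euler correction'' equal to $1$, your display gives $(q^d-q^2+2)/(q^2-1)$ for odd $d$, which is not even an integer when $q=d=3$.

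So ``assembling these contributions'' is not a derivation. What actually carries your genus claim is the final citation of Gekeler, and at that point your proof coincides with the paper's.

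\textbf{A self-contained fix.} The Riemann--Hurwitz route you set aside works, because the wild different at $c_0$ can be computed from the Carlitz module directly. The map $X_0(\mathfrak p)\to\mathbf P^1_j$ has degree $q^d+1$ and branches only in two places:
\begin{enumerate}[label=(\roman*)]
\item over $j=0$, tamely with index $q+1$, except at the $2\varepsilon$ elliptic points of $\Gamma_0(\mathfrak p)$, where the index is $1$;
\item at $c_0$.
\end{enumerate}
Via $W_{\mathfrak p}$, the map at $c_0$ becomes $z\mapsto Pz$ at $c_\infty$. Write $C_P(u)=\sum_{i=0}^d\ell_iu^{q^i}$ with $\ell_0=P$ and $\ell_d=1$. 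Then $x(Pz)=x^{q^d}S^{-(q-1)}$, where $S=1+\sum_{i<d}\ell_ix^{(q^d-q^i)/(q-1)}$. In characteristic $p$ its derivative is $x^{q^d}S'S^{-q}$. Only the $i=0$ term survives in $S'$, giving $S'=Px^{(q^d-1)/(q-1)-1}$, so the different exponent at $c_0$ is $q(q^d-1)/(q-1)$. Hence
\[
2g_{\mathfrak p}-2=-2(q^d+1)+q\cdot\frac{q^d+1-2\varepsilon}{q+1}+\frac{q(q^d-1)}{q-1},
\]
which yields exactly both formulas of the lemma.
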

\begin{proof}
See \cite[Theorem~3.1, Corollary~3.4, and p.~226]{Gek86}.
\end{proof}

Note that $g_{\mathfrak p} > 0$ since we assume that $d \geq 3$.  
To distinguish the infinite place $\infty$ of $F$ from the
cusps  of $X_0(\mathfrak p)$, we denote by
$c_0, c_{\infty}\colon\operatorname{Spec}(F)\to X_0(\mathfrak p)$
the sections corresponding to the cusps $0$ and $\infty$, respectively. 
We use the same notation $c_0$ and $c_\infty$ for their extensions to the integral models of $X_0(\mathfrak p)$ considered below.

\subsection{The canonical integral compactification}\label{sec:canonical-compact}

Choose a Carlitz period $\widetilde\pi$ and let $e_A$ be the exponential
with lattice $A$, normalized by $e_A'(0)=1$. Put
\[
t(z) := \frac{1}{\widetilde\pi e_A(z)},\qquad x := t^{q-1},\qquad y := Tx.
\]
The standard expansion $j(x) \in -x^{-1}+A[[x]]$ identifies
$\widehat{\mathcal O}_{X_0(\mathfrak{p}), c_{\infty}} = F[[x]]$; see \cite[\S2, p.~224]{Gek86}.

Let $\mathcal X_0(\mathfrak p)$ be the canonical integral compactification
over $A$.
The finite morphism $j\colon \mathcal X_0(\mathfrak p)\to\mathbf P^1_A$ identifies
$\mathcal X_0(\mathfrak p)$ with the normalization of $\mathbf P^1_A$
in $F(X_0(\mathfrak p))$. The cusp sections are disjoint and smooth, including
over $\mathfrak p$; see \cite[(5.2), (5.3), (5.7), and the proof of (5.11)]{Gek86}
and \cite[Theorem~4.2]{PW15}.
This normal integral model is flat over $\operatorname{Spec}(A)$.

Let $\mathcal{I}_\infty$ denote the ideal sheaf of the cusp section
$c_{\infty}$ in $\mathcal X_0(\mathfrak p)$.
We denote the formal completion of $\mathcal X_0(\mathfrak p)$
along $c_{\infty}$ by
\[
\widehat{\mathcal X}_{0}(\mathfrak p)_{c_{\infty}}
:=
\varprojlim_{n\ge 0}
\left(
\mathcal X_0(\mathfrak p),
\mathcal O_{\mathcal X_0(\mathfrak p)}/\mathcal{I}_{\infty}^{n+1}
\right).
\]

\begin{lemma}\label{lem:cusp-inputs}
The following statements hold. 
\begin{itemize}
    \item[(1)] For every finite place $v$, the parameter $x$ induces
an isomorphism $\widehat{\mathcal O}_{\mathcal X_0(\mathfrak p),c_{\infty}(v)}  \cong \mathcal O_v[[x]]$. 
    In particular, one has $\widehat{\mathcal X}_{0}(\mathfrak p)_{c_{\infty}}
\cong \operatorname{Spf}A[[x]]$  and 
\[
c_{\infty}^* \Omega^1_{\mathcal X_0(\mathfrak p)/\operatorname{Spec}(A)} \cong A \, \mathrm{d}x. 
\]
    \item[(2)] Let $Y$ be the coordinate on $\mathbf A^1_{F_\infty}$.
Define the Berkovich open unit disk over $F_\infty$ by
\[
\mathbb{D} := \{\xi \in \mathbf{A}^{1,\mathrm{an}}_{F_\infty} \mid |Y|_\xi<1\},
\]
with its induced analytic structure.
Then there is an $F_\infty$-analytic open immersion
\[
\iota_\infty \colon \mathbb D
\hookrightarrow X_0(\mathfrak{p})_{F_\infty}^{\mathrm{an}},
\qquad
\iota_\infty(0) = c_{\infty},
\]
whose induced map on completed local rings is
$\iota_\infty^* \colon  F_\infty[[x]] \stackrel{\sim}{\to} F_\infty[[Y]]; y = Tx\mapsto Y$.
\end{itemize}
\end{lemma}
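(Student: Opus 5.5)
We prove the two parts separately, using Gekeler's description of $\mathcal X_0(\mathfrak p)$ as the normalization of $\mathbf P^1_A$ under $j$ together with the Tate--Drinfeld module at the cusp $\infty$.

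\textbf{Part (1).} The plan is to show that the completed local ring of $\mathcal X_0(\mathfrak p)$ at $c_\infty(v)$ is $\mathcal O_v[[x]]$. Over $\operatorname{Spec}A$, the Tate--Drinfeld module $\phi^{\mathrm{Tate}}$ is defined over $A((x))$, with its canonical cyclic $\mathfrak p$-subgroup $C$, namely the one coming from $\phi^{\mathrm{Carlitz}}[\mathfrak p]$. The pair $(\phi^{\mathrm{Tate}},C)$ gives a morphism $\operatorname{Spf}A[[x]]\to\widehat{\mathcal X}_0(\mathfrak p)_{c_\infty}$ sending $x=0$ to the cusp $\infty$. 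Its composite with $j$ is $j(x)\in -x^{-1}+A[[x]]$. Since $1/j=-x+\dots$ is a uniformizer-type series, the map $\mathcal O_v[[1/j]]\to\mathcal O_v[[x]]$ is an isomorphism. This uses that the leading coefficient is $-1$, a unit at every $v$, which is where the normalization $\widetilde\pi$ enters.

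Hence $j$ is étale at $c_\infty(v)$. To conclude, we identify the completed local ring of the normalization with the integral closure of $\mathcal O_v[[1/j]]$ in the relevant branch of the completed function field. We use smoothness of the cusp sections (Gekeler, Pál--Wiese), including at $v=\mathfrak p$, to see that the branch through $c_\infty$ is unramified over $j=\infty$ of ramification index $1$. The index is $1$ because the cusp $\infty$ has width $1$ on $X_0(\mathfrak p)$, while the other cusp $0$ has width $q^d$. The normal ring finite over $\mathcal O_v[[1/j]]$ is then $\mathcal O_v[[1/j]]=\mathcal O_v[[x]]$. Gluing over all $v$ gives $\widehat{\mathcal X}_0(\mathfrak p)_{c_\infty}\cong\operatorname{Spf}A[[x]]$, and pulling back differentials gives $c_\infty^*\Omega^1=A\,\mathrm dx$.

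The main obstacle is the claim that the ramification index of $j$ at $c_\infty$ is $1$ uniformly in $v$, including at $v=\mathfrak p$. I would cite Gekeler's analysis (his (5.7) and (5.11)) and the Pál--Wiese smoothness result rather than reprove it.

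\textbf{Part (2).} Analytically over $\mathbf C_\infty$, a neighbourhood of $\infty$ in $\Gamma_0(\mathfrak p)\backslash\Omega$ is $\{|z|_i\gg0\}$ modulo translations by $A$ and by $\mathbf F_q^\times$-scalings. On this region $t$, and hence $x=t^{q-1}$, is a coordinate on a punctured disk of some radius $r$; this is Gekeler's uniformization at cusps. We rescale by $T$ so that the image becomes the full open unit disk, defining $\iota_\infty(Y)$ as the point with $y=Tx=Y$. We descend to $F_\infty$ because $x$ and $j$ have $F_\infty$-coefficients, that is, the series lie in $A[[x]]$.

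The point to verify is that the domain $|x|<|T|^{-1}=q^{-1}$ maps injectively into $X_0(\mathfrak p)^{\mathrm{an}}$. For $|Tx|<1$ the series $j(x)$ converges, and injectivity follows from the injectivity of $1/j$ on this disk: by Part (1), $1/j=-x+(\text{higher terms with }A\text{-coefficients})$, and the Newton polygon shows that on $|x|<q^{-1}$ the map $x\mapsto 1/j$ is injective. Equivalently, the Gekeler neighbourhood $|z|_i>1$ corresponds to $|t|<q^{-1/(q-1)}$, so $|x|<q^{-1}$, and on it the stabilizer action is exactly by $A$ and $\mathbf F_q^\times$. An injective étale analytic map is an open immersion, and on completed local rings it induces $y\mapsto Y$, as claimed.
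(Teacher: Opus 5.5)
\textbf{The gap is in part (2):} injectivity on the \emph{full} disk $|y|<1$, i.e.\ $|x|<q^{-1}$, which is exactly the step you single out as ``the point to verify.'' Neither of your two justifications works.

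First, $1/j$ is \emph{not} injective there. Take $z\in\Omega$ with $|z|=|z|_i=q^{-1/2}$ (e.g.\ $z^2=T^{-1}$). Then $|1/z|_i=q^{1/2}$, so both $z$ and $1/z$ lie in $\{|z|_i>q^{-1}\}$, the region that corresponds to $0<|y|<1$. They are not $\Gamma_\infty$-equivalent, since $|\cdot|_i$ is $\Gamma_\infty$-invariant, so they give two different points of the punctured disk. Yet $j(z)=j(1/z)$, because $z\mapsto 1/z$ comes from an element of $\mathrm{GL}_2(A)$. So no Newton-polygon argument on $1/j$ can give injectivity; indeed nothing that only sees the $j$-line, i.e.\ $X(1)$, can.

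Second, your alternative, that the neighbourhood $|z|_i>1$ corresponds to $|x|<q^{-1}$, is off. With $|\widetilde\pi|=q^{q/(q-1)}$, a point with $|z|=|z|_i=1$ has $|e_A(z)|=1$, hence $|x|=q^{-q}$. So $\{|z|_i>1\}/\Gamma_\infty$ is only the smaller disk $|y|<q^{1-q}$. The region that actually fills $|y|<1$ is $\{|z|_i>q^{-1}\}$ (Teitelbaum's Lemma~23), and there the $\mathrm{GL}_2(A)$-stabilizer is strictly larger than $\Gamma_\infty$.

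\textbf{The missing idea is to use the level structure.} If $g\in\Gamma_0(\mathfrak p)$ has lower-left entry $\gamma\neq0$, then $\gamma\in\mathfrak p$ forces $|\gamma|\ge q^d$. Hence $|gz|_i\le 1/(|\gamma|^2|z|_i)<q^{-1}$ whenever $|z|_i>q^{-1}$, so $\{|z|_i>q^{-1}\}/\Gamma_\infty\cong\{0<|y|<1\}$ embeds into $\Gamma_0(\mathfrak p)\backslash\Omega$. This is what gives the radius $|y|<1$ rather than the $X(1)$-radius, and that radius is what feeds into Lemma~\ref{lem:infinity} and the lattice $\mathcal L_\infty$.

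\textbf{Descent in part (2).} Saying that ``$x$ and $j$ have $F_\infty$-coefficients'' only defines a map to the $j$-line. To get a map $\mathbb D\to X_0(\mathfrak p)^{\mathrm{an}}_{F_\infty}$ you need generators of $F(X_0(\mathfrak p))$, e.g.\ $(j,j_P)$, together with normality of $\mathbb D$. You then need a descent result (the paper uses Conrad--Temkin) to pass the open-immersion property down from $\mathbf C_\infty$.

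\textbf{Part (1).} Your branch-degree argument is fine in spirit, but the inputs it really needs are these: $j$ is finite, so $j^{-1}(\infty_A)$ has no vertical components; and the cusp sections are \emph{disjoint}, not merely smooth. Together these give that near $c_\infty(v)$ the divisor of $1/j$ is exactly $c_\infty$ with multiplicity one. The Tate--Drinfeld map into the normalization model is unnecessary, since that model is not given a moduli interpretation here. Also, ``hence $j$ is \'etale'' does not follow merely from the composite $\mathcal O_v[[1/j]]\to\mathcal O_v[[x]]$ being an isomorphism.
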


\begin{proof}
Since the morphism $j$ is finite, the divisor $j^{-1}(\infty_A)$ does not have vertical components.
Moreover, $j$ has a simple pole at the cusp $c_{\infty}$ on the generic fiber.
Hence, near the section $c_{\infty} \in \mathcal{X}_0(\mathfrak{p})(A)$, the divisor $j^{-1}(\infty_A)$ consists only of that section with multiplicity one.
Consequently, one has
$\widehat{\mathcal O}_{\mathcal X_0(\mathfrak p),c_{\infty}(v)} = \mathcal O_v[[j^{-1}]] \cong \mathcal O_v[[x]]$. 
The equality follows from the fact that $j(x) \in -x^{-1}+A[[x]]$.

For assertion (2), we first construct a morphism $\mathbb D \to X_0(\mathfrak{p})_{F_\infty}^{\mathrm{an}}$.
Put $j_P(z):=j(Pz)$. 
The Carlitz division polynomials and the cusp expansions give
$j,j_P\in F((x))=F((y))$, convergent on the punctured disk,
and $F(X_0(\mathfrak p))=F(j,j_P)$ by
\cite[\S1--2 and Proposition~3.2(iii)]{Gek86}.
Thus $(j,j_P)$ defines an $F_\infty$-analytic morphism 
$\mathbb D \to \mathbf P^{1,\mathrm{an}}_{F_\infty} \times\mathbf P^{1,\mathrm{an}}_{F_\infty}$. 
Since $F_\infty(X_0(\mathfrak p))=F_\infty(j,j_P)$,
it follows from the facts that $\mathbb D$ is normal that the nonconstant morphism $(j,j_P) \colon \mathbb D \to \mathbf P^{1,\mathrm{an}}_{F_\infty} \times \mathbf P^{1,\mathrm{an}}_{F_\infty}$ factors uniquely through a morphism
$\iota_\infty\colon\mathbb D \to  X_0(\mathfrak p)_{F_\infty}^{\mathrm{an}}$. 

By \cite[Theorem~11.5]{CT21}, it remains to show that the base change of $\iota_\infty$ to $\mathbf C_\infty$ is an open immersion. 
Let $|\cdot|$ denote the absolute value on $\mathbf{C}_\infty$ defined by $|f| := q^{\deg f}$ for any nonzero $f \in A$. 
Put $\Omega:=\mathbf C_\infty\setminus F_\infty$ and
$|z|_i:=\inf_{a\in F_\infty}|z-a|$.
Let $\Gamma_\infty$ be the stabilizer of $\infty$ in
\[
\Gamma_0(\mathfrak p):=
\left\{
\begin{pmatrix}
\alpha&\beta\\
\gamma&\delta
\end{pmatrix}
\in\mathrm{GL}_2(A)
\,\middle|\,
\gamma\in\mathfrak p
\right\}.
\]
By \cite[Lemma~23]{Tei91}, using
$|\widetilde\pi|=q^{q/(q-1)}$ and then quotienting by
$\mathbf F_q^\times$, one obtains
\[
\{z\in\Omega\mid |z|_i>q^{-1}\}/\Gamma_\infty \stackrel{\sim}{\to}
\{0<|y|<1\}; \qquad z \mapsto y(z). 
\]
If
$g=\begin{pmatrix}\alpha&\beta\\
\gamma&\delta\end{pmatrix}\in\Gamma_0(\mathfrak p)$
has $\gamma\ne0$, then $|\gamma|\ge q^d$ and
\[
|gz|_i
=\frac{|z|_i}{|\gamma z+\delta|^2}
\le\frac{1}{|\gamma|^2|z|_i}<q^{-1}.
\]
Hence
$\{z\in\Omega\mid |z|_i>q^{-1}\}/\Gamma_\infty$
embeds in
$\Gamma_0(\mathfrak p)\backslash\Omega
\cong Y_0(\mathfrak p)_{\mathbf C_\infty}^{\mathrm{an}}$.
The embedding extends canonically to an open immersion $\iota'_{\infty} \colon \mathbb D_{\mathbf C_\infty} \hookrightarrow
X_0(\mathfrak p)_{\mathbf C_\infty}^{\mathrm{an}}$. 
By construction, one has $(j,j_P)\circ(\iota_\infty)_{\mathbf C_\infty} = (j,j_P)\circ\iota'_\infty$ on the punctured disk, and hence on $\mathbb D_{\mathbf C_\infty}$. 
Thus $(\iota_\infty)_{\mathbf C_\infty} =\iota'_\infty$ by the uniqueness of $\iota_\infty$. 
\end{proof}

\subsection{The Eisenstein quotient}

Put $J := \operatorname{Jac}(X_{0}(\mathfrak p))$, and let $B := E(\mathfrak p)$ be the Eisenstein
quotient defined in \cite[Definition~3.2]{Pal10}.
By definition, the kernel of the natural quotient map
$\pi \colon J\twoheadrightarrow B$ is an abelian subvariety of $J$.
In particular, $\pi$ is smooth.
We define
\[
u \colon X_0(\mathfrak p)\to B, \qquad u(x) := \pi([x]-[c_{\infty}]).
\]

\begin{theorem}\label{thm:MW}
The quotient map induces an isomorphism
\[
\langle[c_{0}]-[c_{\infty}]\rangle  \stackrel{\sim}{\to} B(F),
\qquad
\#B(F)=
\begin{cases}
(q^d-1)/(q-1),& \text{$d$ is odd},\\
(q^d-1)/(q^2-1),& \text{$d$ is even}.
\end{cases}
\]
In particular, $\dim B>0$, $B(F)$ is finite, and $B(F)[q^\infty]=0$.
Moreover, $\pi\circ W_{\mathfrak p}=-\pi$, and $B$ has split purely toric reduction at $\infty$.
The homomorphism
\[
u^* \colon H^0(B,\Omega^1_{B/F}) \hookrightarrow
H^0\bigl(X_0(\mathfrak p),\Omega^1_{X_0(\mathfrak p)/F}\bigr)
\]
is injective. In particular, $u$ is nonconstant.
\end{theorem}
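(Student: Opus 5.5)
This theorem mostly gathers results from the literature, so my plan is to cite each part and add short deductions where needed. The first step is the isomorphism $\langle[c_0]-[c_\infty]\rangle\stackrel{\sim}{\to}B(F)$, which is exactly P\'al's theorem \cite[Theorem~3.5]{Pal10}: the cuspidal divisor group maps isomorphically onto the $F$-rational points of the Eisenstein quotient. The order of $[c_0]-[c_\infty]$ in $J(F)$ was computed by Gekeler. I will quote that it equals $(q^d-1)/(q-1)$ for $d$ odd and $(q^d-1)/(q^2-1)$ for $d$ even \cite{Gek86,Pal07}. This gives the stated value of $\#B(F)$.

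Next come the immediate consequences. Since $d\ge3$, the order is $>1$, so $B(F)\neq0$ and hence $\dim B>0$. Finiteness of $B(F)$ is clear from the isomorphism. Both expressions for $\#B(F)$ are $\equiv1\pmod p$, because $q^d-1\equiv-1$ and $q-1\equiv q^2-1\equiv-1 \pmod p$. Hence $B(F)[q^\infty]=0$.

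For the relation $\pi\circ W_{\mathfrak p}=-\pi$, I will recall that P\'al defines $E(\mathfrak p)$ as the quotient of $J$ by the ideal generated by the Eisenstein ideal together with $1+W_{\mathfrak p}$, following Tamagawa's minus quotient \cite{Tam95}. So $W_{\mathfrak p}$ acts on $B$ as $-1$. If instead the definition involves only the Eisenstein ideal, I will argue differently: the Eisenstein ideal contains $1+W_{\mathfrak p}$ up to the known relation $T_{\mathfrak p}=-W_{\mathfrak p}$ on the $\mathfrak p$-new part and $T_{\mathfrak p}\equiv1$ modulo Eisenstein. For split purely toric reduction at $\infty$, I will use that $J$ itself has split purely toric reduction at $\infty$ via the Gekeler--Reversat uniformization by harmonic cochains. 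This property passes to quotients whose kernel is an abelian subvariety, because the Tate uniformization is compatible with Hecke-stable quotients.

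For injectivity of $u^*$, I will factor $u$ as $\pi\circ\iota$, where $\iota\colon X_0(\mathfrak p)\to J$ is the Abel--Jacobi map based at $c_\infty$. It is standard that $\iota^*\colon H^0(J,\Omega^1)\to H^0(X_0(\mathfrak p),\Omega^1)$ is an isomorphism. Since $\pi$ is a smooth surjection, $\pi^*$ on invariant differentials is injective: its image is the annihilator of the tangent space of $\ker\pi$, and $\pi$ induces a surjection on Lie algebras. Composing the two maps shows that $u^*$ is injective. Since $\dim B>0$, we have $H^0(B,\Omega^1)\neq0$, so $u^*\neq0$ and $u$ is nonconstant. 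Only one point needs care in characteristic $p$: surjectivity on Lie algebras requires $\ker\pi$ to be reduced, which is exactly why the paper insists that the kernel is an abelian subvariety. I expect the main obstacle to be matching P\'al's precise definition of the Eisenstein quotient with the $W_{\mathfrak p}$-minus and toric-reduction statements, rather than any real computation.
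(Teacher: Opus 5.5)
Apart from one step, your proposal is the paper's proof. The paper likewise cites \cite[Theorem~3.5]{Pal10} for the isomorphism and Gekeler for the order. It proves injectivity of $u^*$ exactly as you do, from $H^0(J,\Omega^1)\cong H^0(X_0(\mathfrak p),\Omega^1)$ and the smoothness of $\pi$. It cites Papikian for the split purely toric reduction of $J$ and its inheritance by $B$; the clean reason for the inheritance is that $J$ is isogenous to $\ker\pi\times B$, and toric rank and splitness of the torus are isogeny invariants. Your congruence argument for $B(F)[q^\infty]=0$ is correct.

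The gap is the relation $\pi\circ W_{\mathfrak p}=-\pi$. It is not part of P\'al's definition; the paper obtains it from \cite[Corollary~3.7]{Pal10}. Read literally, a quotient of $J$ by an ideal containing the Eisenstein ideal $\mathfrak E$ would be $J/\mathfrak E J=0$: $\mathfrak E$ has finite index $N$ in $\mathbb T$, so $N\in\mathfrak E$ and $NJ=J$. The Eisenstein quotient is of Mazur type, $B=J/\gamma J$ with $\gamma=\ker\bigl(\mathbb T\to\prod_{\mathfrak m\supseteq\mathfrak E}\mathbb T_{\mathfrak m}\bigr)$. So what must be shown is $1+W_{\mathfrak p}\in\gamma$, i.e.\ $1+W_{\mathfrak p}=0$ in every completion $\mathbb T_{\mathfrak m}$.

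Your fallback only gives $1+W_{\mathfrak p}\in\mathfrak E$. That says $W_{\mathfrak p}\equiv-1$ modulo each Eisenstein maximal ideal, so $W_{\mathfrak p}=-1$ on $J[\mathfrak E]$, but not on $B$. To upgrade it one argues eigenform by eigenform. Each newform $f$ contributing to $\mathbb T_{\mathfrak m}\otimes\mathbf Q$ has $W_{\mathfrak p}$-eigenvalue $\epsilon_f=\pm1$ with $-\epsilon_f\equiv1\pmod{\mathfrak m}$. This forces $\epsilon_f=-1$ when $\mathfrak m$ has odd residue characteristic, and torsion-freeness of $\mathbb T_{\mathfrak m}$ then gives $1+W_{\mathfrak p}=0$ there.

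In residue characteristic $2$ this congruence carries no information, and such Eisenstein maximal ideals do occur. For $q$ odd and $4\mid d$, the cuspidal group has even order $(q^d-1)/(q^2-1)$, so $\mathbb T/(\mathfrak E+2\mathbb T)\neq0$. So you must either supply the extra input needed at $2$ or cite \cite[Corollary~3.7]{Pal10}, as the paper does.
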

\begin{proof}
The isomorphism and the Atkin--Lehner action are proved by P\'al in
\cite[Theorem~3.5 and Corollary~3.7]{Pal10}.
The order of the cuspidal divisor is proved by Gekeler in \cite[Theorem~4.1]{Gek86}.
The Jacobian $J$ has split purely toric reduction at $\infty$ by \cite[\S3.1]{Pap04}.
The same holds for $B$ by \cite[Lemma~2.2]{Pap04}.
Since $H^0(J,\Omega^1_{J/F}) \stackrel{\sim}{\to}H^0(X_0(\mathfrak p),
\Omega^1_{X_0(\mathfrak p)/F})$, the injectivity of $u^*$ follows from the fact that $\pi$ is smooth and surjective, and $\dim B>0$ implies that $u$ is nonconstant.
\end{proof}

\subsection{Nefness of the Hodge bundle}\label{sec:hodge}
Let $\mathcal B$ be the N\'eron model of $B$ over
$\mathbf P^1_{\mathbf F_q}$, and let $\mathcal B^0$ be its identity
component. 
Let $e_{\mathcal{B}} \colon \mathbf P^1_{\mathbf{F}_q}\to\mathcal{B}^0$ be the identity section. We define the Hodge bundle by
\[
\mathcal{E}_{\mathcal{B}} := e_{\mathcal{B}}^*\Omega^1_{\mathcal{B}^0/\mathbf P^1_{\mathbf{F}_q}}.
\]

\begin{lemma}\label{lem:nef}
The abelian variety $B$  is ordinary, that is, $\dim_{\mathbf F_p}B[p](\overline F)= \dim B$, and the bundle $\mathcal{E}_{\mathcal{B}}$
on $\mathbf P^1_{\mathbf F_q}$ is nef.
\end{lemma}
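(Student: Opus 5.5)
We treat ordinarity and nefness separately, using the split purely toric reduction of $B$ at $\infty$ from Theorem~\ref{thm:MW}.

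\emph{Ordinarity.} Let $\mathcal B_{k_\infty}$ be the special fibre at $\infty$. Since the reduction is purely toric, $\mathcal B^0_{k_\infty}$ is a torus $\mathbf G_m^{g}$ with $g=\dim B$, and it is split. Hence its $p$-rank is $g$. By Tate--Raynaud uniformization we have $B_{F_\infty}^{\mathrm{an}}\cong \mathbf G_m^{g,\mathrm{an}}/\Lambda$ for a lattice $\Lambda$. This gives an exact sequence
\[
0\to \mu_p^{g}\to B[p](\overline{F_\infty})\to \Lambda/p\Lambda\to 0.
\]
The connected part $\mu_p^g$ has no $\overline F$-points in characteristic $p$. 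However, $\Lambda/p\Lambda\cong(\mathbf Z/p)^g$ contributes $g$ dimensions of étale $p$-torsion. So $\dim_{\mathbf F_p}B[p](\overline F)=g$. The $p$-rank is invariant under base change to $F_\infty$, so $B$ is ordinary.

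\emph{Nefness.} Since $\mathcal E_{\mathcal B}$ is a vector bundle on $\mathbf P^1$, it splits as $\bigoplus\mathcal O(a_i)$. We must show that every $a_i\ge0$, i.e.\ that $\mathcal E_{\mathcal B}$ has no quotient line bundle of negative degree. We do this with the Hasse invariant. Over the semiabelian locus, Verschiebung $V\colon \mathcal B^{0,(p)}\to\mathcal B^0$ induces the Hasse–Witt map
\[
\mathcal E_{\mathcal B}\to \mathcal E_{\mathcal B}^{(p)}\cong F_{\mathrm{abs}}^*\mathcal E_{\mathcal B}.
\]
Using $F_{\mathrm{abs}}^*\mathcal O(a)=\mathcal O(pa)$, this is a map $\bigoplus\mathcal O(a_i)\to\bigoplus \mathcal O(pa_i)$.

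To get this map we need $\mathcal B^0$ to be semiabelian everywhere. We argue as follows. $B$ is a quotient of $J$, and $J$ has semistable reduction everywhere: at $\infty$ it is toric, at $\mathfrak p$ the curve $\mathcal X_0(\mathfrak p)$ has semistable reduction, and elsewhere the reduction is good. Hence $\mathcal B^0$ is semiabelian. Alternatively, one can work with a semistable model and compare Hodge bundles.

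The Hasse–Witt map is generically an isomorphism because $B$ is ordinary. Its determinant is therefore a nonzero section of $\det(\mathcal E)^{\otimes(p-1)}$, so $\deg\mathcal E_{\mathcal B}\ge0$. This alone does not give nefness. For the full statement, let $a_{\min}$ be the smallest $a_i$, and let $\mathcal Q\cong\mathcal O(a_{\min})$ be the corresponding summand. Generic injectivity forces the restriction to $\mathcal Q$ to be nonzero. Its image lies in some $\mathcal O(pa_j)$ with $pa_j\ge a_{\min}$.

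An iteration argument over the filtration by slopes should then give $pa_{\min}\ge a_{\min}$, and hence $a_{\min}\ge0$. To set this up, we consider the subbundle $\mathcal E_{<0}$ spanned by the negative summands. Every map $\mathcal O(a)\to\mathcal O(pb)$ with $a<0\le b$ is allowed. But the induced map
\[
\mathcal E_{<0}\to F_{\mathrm{abs}}^*\mathcal E_{<0}\oplus(\text{rest})
\]
must be injective. Comparing the most negative degrees ($pa_{\min}<a_{\min}$) shows that the component into $F^*\mathcal E_{<0}$ vanishes on the most negative piece. This yields an inductive contradiction only if we also use that the Hasse–Witt map is an isomorphism at $\infty$. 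There $\mathcal B^0$ is a split torus, whose $V$ is bijective on $\omega$.

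An alternative we would try first is Moret-Bailly/Faltings: for semiabelian $\mathcal B$ over a curve in characteristic $p$ with ordinary generic fibre, the canonical lift of the ordinary locus makes $\omega$ semipositive. Concretely, the Hasse–Witt map $\mathcal E\to F^*\mathcal E$ being injective with torsion cokernel forces $\mathcal E$ to be nef by a standard argument on the Harder–Narasimhan filtration. The maximal destabilizing quotient $\mathcal Q$ of minimal slope $\mu$ has $\mu\ge p\mu$, hence $\mu\ge0$. The step we expect to be the main obstacle is this Harder–Narasimhan/Frobenius comparison. In particular, we need the maps to go the correct direction, namely a map from $F^*\mathcal E$ to $\mathcal E$ (dual Verschiebung, i.e.\ Frobenius on $\omega$), so that quotients, not subbundles, are controlled.
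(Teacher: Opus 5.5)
Your ordinarity argument is the paper's: Raynaud uniformization, bijectivity of $p$-th powers on $\mathbf{G}_m^g(\mathbf{C}_\infty)$, hence $B[p](\overline F)\cong\Lambda/p\Lambda$. Your aside that the special-fibre torus ``has $p$-rank $g$'' is off, since $\mathbf{G}_m^g[p]=\mu_p^g$ has no nontrivial points, but nothing depends on it.

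The nefness part has a genuine gap: you never actually derive $a_{\min}\ge 0$.
\begin{itemize}
\item Restricting to \emph{source} summands ($\mathcal{O}(a_{\min})$ or $\mathcal{E}_{<0}$) cannot give a contradiction, because negative summands may map nontrivially into $\mathcal{O}(pa_j)$ with $a_j\ge 0$.
\item Your claim that $\mathcal{O}(a_{\min})$ has zero component into $F_{\mathrm{abs}}^*\mathcal{E}_{<0}$ is false in general. A nonzero map $\mathcal{O}(a_{\min})\to\mathcal{O}(pa_j)$ exists as soon as $a_{\min}\le pa_j$, e.g.\ $a_{\min}=-5$, $a_j=-1$, $p=3$.
\item The appeal to the fibre at $\infty$ is not an argument.
\item Your Harder--Narasimhan sentence has the inequality reversed: $\mu\ge p\mu$ gives $\mu\le 0$.
\item The ``correct direction'' you end with is the wrong one. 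A generically surjective $F_{\mathrm{abs}}^*\mathcal{E}\to\mathcal{E}$ would give $\mu_{\min}(\mathcal{E})\ge p\,\mu_{\min}(\mathcal{E})$, i.e.\ $\mu_{\min}\le 0$. Moreover, relative Frobenius induces $0$ on $\omega$.
\end{itemize}

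The missing step is to use generic \emph{surjectivity} of the map you already wrote down and look at the \emph{target}. Since $B$ is ordinary, $V$ is \'etale on the generic fibre, so $V^*\colon\mathcal{E}_{\mathcal{B}}\to F_{\mathrm{abs}}^*\mathcal{E}_{\mathcal{B}}$ is a generic isomorphism. Compose it with the projection onto a summand $\mathcal{O}(pa_{\min})$ of $F_{\mathrm{abs}}^*\mathcal{E}_{\mathcal{B}}$. If $a_{\min}<0$, then $pa_{\min}-a_i\le (p-1)a_{\min}<0$ for every $i$. Hence $\operatorname{Hom}(\mathcal{O}(a_i),\mathcal{O}(pa_{\min}))=0$ and the composite vanishes, contradicting generic surjectivity. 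So $a_{\min}\ge 0$. Equivalently, $\mu_{\min}$ cannot decrease along a generically surjective map, so $pa_{\min}\ge a_{\min}$.

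With this fix, your route becomes a valid, self-contained substitute for the paper's argument. The paper instead cites R\"ossler \cite[Theorem~1.2(a)]{Ros15}, which gives $\lim_n p^{-n}\mu_{\min}((\operatorname{Frob}_p^n)^*\mathcal{E}_{\mathcal{B}})\ge 0$ for ordinary $B$ after base change to $\overline{\mathbf{F}}_q$. Your version needs neither the citation nor that base change.

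Your semistability detour is also unnecessary. The base change $\mathcal{B}^{(p)}:=\mathcal{B}\times_{\mathbf{P}^1,F_{\mathrm{abs}}}\mathbf{P}^1$ is smooth with generic fibre $B^{(p)}$. So $V$ extends to $\mathcal{B}^{(p)}\to\mathcal{B}$ by the N\'eron mapping property, and $e^*\Omega^1_{\mathcal{B}^{(p)}}=F_{\mathrm{abs}}^*\mathcal{E}_{\mathcal{B}}$.
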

\begin{proof}
Since $B$ has split purely toric reduction at $\infty$ by Theorem \ref{thm:MW}, Raynaud's uniformization gives $B^{\mathrm{an}}=G^{\mathrm{an}}/\Lambda$ over $\mathbf{C}_\infty$,
where $G\cong\mathbf G_m^{\dim B}$ and
$\Lambda\cong\mathbf Z^{\dim B}$.
Multiplication by $p$ is bijective on $G(\mathbf{C}_\infty)$.
Consequently $B[p](\mathbf C_\infty)\cong\Lambda/p\Lambda$ and the $p$-rank of $B$ is $\dim B$.

Write $\mathcal{E}_{\mathcal{B}}\cong\bigoplus_i\mathcal O_{\mathbf P^1_{\mathbf F_q}}(d_i)$.
Since the pullback of $\mathcal O_{\mathbf P^1_{\mathbf F_q}}(d')$ by absolute $p$-Frobenius is $\mathcal{O}_{\mathbf P^1_{\mathbf F_q}}(p d')$, we have
\[
\lim_{n\to\infty}p^{-n}
\mu_{\min}\bigl((\operatorname{Frob}_p^n)^*\mathcal{E}_{\mathcal{B}}\bigr)
=\min_i d_i.
\]
Here $\mu_{\min}$ denotes the minimum slope.
Since ordinarity is preserved under field extensions, R\"ossler's theorem \cite[Theorem~1.2(a)]{Ros15} applies after base change to $\mathbf P^1_{\overline{\mathbf F}_q}$, yielding $\min_i d_i\ge0$.
Thus $\mathcal{E}_{\mathcal{B}}$ is nef.
\end{proof}

\section{An upper bound for primes with no potentially good reduction}\label{sec:bound}

\subsection{Extension at the infinite place}

\begin{lemma}\label{lem:infinity}

Let $E/F_\infty$ be an abelian variety with split purely toric reduction, and let $\mathcal E/\mathcal{O}_\infty$ be its N\'eron model.
Denote by $\widehat{\mathcal E}_0$ the formal
completion of $\mathcal E$ at the identity section of its special fiber,
with the maximal-ideal-adic topology.
By Lemma~\ref{lem:cusp-inputs}(2), we identify $\mathbb{D}$
with an admissible open subset of
$X_0(\mathfrak p)_{F_\infty}^{\mathrm{an}}$ containing $c_{\infty}$,
with $c_{\infty}$ corresponding to $y=0$.

Then for every morphism $f \colon X_0(\mathfrak p)_{F_\infty}\to E$ with $f(c_{\infty})=0$, there exists a unique morphism of formal
$\mathcal{O}_\infty$-schemes
\[
\widehat f\colon
\operatorname{Spf}\mathcal O_\infty[[y]]
\to
\widehat{\mathcal E}_0,
\]
whose analytic generic fiber agrees with $f^{\mathrm{an}}|_{\mathbb D}$.
\end{lemma}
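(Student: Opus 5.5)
The approach is to use Raynaud's uniformization of $E$ and reduce to a statement about bounded analytic functions on the open unit disk. Since $E$ has split purely toric reduction, $E^{\mathrm{an}}\cong G^{\mathrm{an}}/\Lambda$ with $G\cong\mathbf G_m^{r}$ split over $F_\infty$ and $\Lambda\subset G(F_\infty)$ a lattice. The formal completion $\widehat{\mathcal E}_0$ of the N\'eron model at the identity of the special fibre is then the formal completion of the split torus model $\mathbf G_{m,\mathcal O_\infty}^r$ at $1$, that is, $\operatorname{Spf}\mathcal O_\infty[[w_1,\dots,w_r]]$ with $w_i=u_i-1$. The reason is that the identity component of the special fibre of $\mathcal E$ is the torus $\mathbf G_{m,k_\infty}^r$, and the N\'eron model near the identity is the formal torus. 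The generic fibre of $\widehat{\mathcal E}_0$ is the polydisc $\{|u_i-1|<1\}$, which maps isomorphically onto an open subgroup of $E^{\mathrm{an}}$ because $\Lambda$ meets it only in $1$. So a morphism $\widehat f$ as in the statement is the same as an $r$-tuple of power series $w_i(y)\in\mathcal O_\infty[[y]]$ with $w_i(0)=0$, and the task is to show that $f^{\mathrm{an}}|_{\mathbb D}$ is given by such a tuple.

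The steps are as follows.
\begin{enumerate}[label=(\roman*)]
\item Lift $f^{\mathrm{an}}|_{\mathbb D}$ to $G^{\mathrm{an}}$. The disk $\mathbb D$ is simply connected, being a contractible Berkovich space, and $G^{\mathrm{an}}\to E^{\mathrm{an}}$ is a topological covering with group $\Lambda$. Hence the lift $\widetilde f\colon\mathbb D\to G^{\mathrm{an}}$ exists and is unique once we impose $\widetilde f(0)=1$. This gives invertible analytic functions $u_1,\dots,u_r$ on $\mathbb D$ with $u_i(0)=1$.
\item Observe that an invertible analytic function on the open unit disk over $F_\infty$ has the form $c\,(1+h)$, where $c\in F_\infty^\times$ and $h\in y\,\mathcal O_\infty[[y]]$ with $h$ topologically nilpotent. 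The normalization $u_i(0)=1$ forces $c=1$.
\item Conclude that each $u_i\in\mathcal O_\infty[[y]]$ satisfies $u_i\equiv 1 \bmod y$. So $w_i:=u_i-1\in y\,\mathcal O_\infty[[y]]$, and the map $w_i\mapsto u_i-1$ defines a continuous $\mathcal O_\infty$-algebra homomorphism $\mathcal O_\infty[[w]]\to\mathcal O_\infty[[y]]$, which is $\widehat f$.
\end{enumerate}

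For (ii), the argument is Newton-polygon based. A unit $u=\sum a_n y^n$ on $\{|y|<1\}$ has no zeros, so for every $\rho<1$ its Weierstrass degree on $\{|y|\le\rho\}$ is $0$. This means $|a_0|\rho^0\ge|a_n|\rho^n$ for all $n$ and all $\rho<1$, and letting $\rho\to1$ gives $|a_n|\le|a_0|$. The same bound for $u^{-1}$ gives strictness on the coefficients needed to obtain $|u-a_0|<|a_0|$ on the disk. Normalizing $a_0=1$ yields $a_n\in\mathcal O_\infty$, which is the only integrality we need.

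Uniqueness follows because $\mathcal O_\infty[[y]]\subset F_\infty[[y]]$ injects into the ring of analytic functions on $\mathbb D$, so $\widehat f$ is determined by its generic fibre. I expect the main obstacle to be the identification in the first paragraph: checking carefully that $\widehat{\mathcal E}_0$, taken with the maximal-ideal-adic topology, is the formal torus, and that the Raynaud covering restricted to the unit polydisc around $1$ is compatible with the N\'eron model. I would cite the standard comparison between the formal N\'eron model and the Raynaud extension for split toric reduction, as in Bosch--L\"utkebohmert. Once that is in place, the argument reduces to the elementary statements in steps (i) and (ii).
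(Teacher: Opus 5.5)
Your proposal is correct and follows essentially the same route as the paper. It uses Raynaud uniformization, the identification $\widehat{\mathcal E}_0\cong\operatorname{Spf}\mathcal O_\infty[[X_1,\ldots,X_g]]$, the normalized lift $\widetilde f\colon\mathbb D\to G^{\mathrm{an}}$ obtained from simple connectedness of the Berkovich disk, integrality of the coefficients of the units $h_i$ with $h_i(0)=1$, and uniqueness via $\mathcal O_\infty[[y]]\hookrightarrow F_\infty[[y]]$. The only difference is cosmetic: you get $|a_n|\le|a_0|$ from the Weierstrass degree being $0$ on each closed disk of radius $\rho<1$, whereas the paper uses multiplicativity of the Gauss norm, $\|h_i\|_\rho\|h_i^{-1}\|_\rho=1$ with both factors $\ge1$, and these are equivalent (also, the bound for $u^{-1}$ is not needed, since $|a_n|\le1$ already gives $|u(y)-1|\le|y|<1$).
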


\begin{proof}
Put $g :=\dim E$.
Since $E$ has split purely toric reduction at $\infty$, Raynaud's uniformization gives an analytic covering
\[
\rho\colon G^{\mathrm{an}}\to E^{\mathrm{an}},
\qquad
G := \mathbf{G}_{m,F_\infty}^{g},
\]
whose kernel is a discrete  lattice. 
Let $\mathcal{E}_0$ denote the open subgroup scheme of $\mathcal{E}$ whose fibers are the identity components of the fibers of $\mathcal{E}$.
Let $\widehat{\mathcal E}_{0, k_{\infty}}$ denote the formal completion of $\mathcal{E}_0$ along its special fiber.
Then one has $\operatorname{Spf}\mathcal O_\infty
 \langle Z_1^{\pm1},\ldots,Z_g^{\pm1}\rangle
\cong \widehat{\mathcal{E}}_{0, k_{\infty}}$ as $T^{-1}$-adic formal schemes 
(see \cite[\S3.1]{Pap04} and \cite[the proof of Proposition~3.3]{Pap07}). 
Completing further at the identity point $Z_i=1$ of the special fiber gives
\[
\widehat{\mathcal E}_0
\cong\operatorname{Spf}\mathcal O_\infty[[X_1,\ldots,X_g]],
\qquad X_i := Z_i-1,
\]
with  $(T^{-1},X_1,\ldots,X_g)$-adic topology. The analytic
generic fibers of these completed local formal schemes are
the open unit polydisks.

A Berkovich open disk is a locally path-connected tree; see \cite[Propositions~5.7.1 and~5.7.12,
Remark~5.7.14]{PT20I} for example.  
Hence the morphism $f$ admits a unique analytic lift
\[
\widetilde{f} \colon\mathbb{D}
\to G^{\mathrm{an}},
\qquad
\widetilde f(0)=(1,\ldots,1).
\]
Write
\[
\widetilde f(y)=(h_1(y),\ldots,h_g(y)),
\qquad
h_i(y)=1+\sum_{n\ge1}c_{i,n}y^n,
\qquad c_{i,n}\in F_\infty.
\]
Both $h_i$ and $1/h_i$ are analytic on
$|y|<1$.
For $0<r<1$, let $\|\cdot\|_r$ denote the
Gauss norm of radius $r$.
By multiplicativity of the Gauss norm and $h_i(0)=1$, one has
\[
\|h_i\|_r\|h_i^{-1}\|_r=1,
\qquad
\|h_i\|_r\ge1,\qquad
\|h_i^{-1}\|_r\ge1.
\]
Consequently, $\|h_i\|_r=1$ and $|c_{i,n}|r^n\le1$.
Letting $r\to1$ gives $c_{i,n}\in\mathcal O_\infty$ and $h_i(y)-1\in y\mathcal O_\infty[[y]]$.
The homomorphism
\[
\mathcal O_\infty[[X_1,\ldots,X_g]]
\to
\mathcal O_\infty[[y]],
\qquad
X_i\mapsto h_i(y)-1,
\]
is continuous for these maximal-ideal-adic topologies and therefore
defines the required morphism
\[
\widehat{f} \colon  \operatorname{Spf}\mathcal O_\infty[[y]]
\to
\widehat{\mathcal E}_0.
\]
On $|y|<1$, the inequalities $|h_i(y)-1|\le |y|<1$ show that
its analytic generic fiber is precisely $f^{\mathrm{an}}$.
Uniqueness follows from the uniqueness of
the normalized analytic lift and the injectivity of $\mathcal O_\infty[[y]] \hookrightarrow F_\infty[[y]]$.
\end{proof}

\subsection{The first nonzero term at the cusp \texorpdfstring{$c_{\infty}$}{c\_infty}}

Let $\mathfrak{m}_{B,0}$ and $\mathfrak{m}_{X_{0}(\mathfrak p), c_{\infty}}$ denote the maximal ideals at the identity $e_{B}\in B$ and the cusp $c_{\infty} \in X_{0}(\mathfrak{p})$, respectively. 
Since $u$ is nonconstant and $\mathcal O_{X_0(\mathfrak{p}), c_{\infty}}$ is a discrete
valuation ring, there is a unique integer $m\ge1$ such that
\begin{equation}\label{eq:m}
u^*\mathfrak m_{B,0}\cdot\mathcal O_{X_0(\mathfrak{p}), c_{\infty}}
=\mathfrak m_{X_0(\mathfrak{p}), c_{\infty}}^{m}.
\end{equation}
Equivalently, $m$ is the least degree of a nonzero coefficient among
the expansions $u^*z_i\in F[[x]]$, for local parameters $z_1, \ldots, z_{\dim B}$ of $B$ at $0$.
Since $2m\ge m+1$, pullback induces a nonzero $F$-linear map
\[
\alpha_m \colon 
\mathcal{E}_{\mathcal{B}}\otimes F
\cong\mathfrak m_{B,0}/\mathfrak m_{B,0}^2
\to
\mathfrak m_{X_0(\mathfrak p), c_{\infty}}^{m}/
\mathfrak m_{X_0(\mathfrak p), c_{\infty}}^{m+1} \cong\operatorname{Cot}_{ c_{\infty}}(X_0(\mathfrak p))^{\otimes m}.
\]
If $u^*z_i = a_{i,m}x^m+O(x^{m+1})$, then $\alpha_m(\mathrm{d}z_i) = a_{i,m} \, (\mathrm{d}x)^{\otimes m}$.

The finite-place parameter $x$ and the infinite-place
parameter $y=Tx$ (see Section \ref{sec:canonical-compact}) determine the following
$\mathcal O_v$-lattices in
$\operatorname{Cot}_{ c_{\infty}}\bigl(X_0(\mathfrak p)\bigr)$:
\[
\mathcal L_v := \mathcal O_v\,\mathrm{d}x
\quad(v\ne\infty),\qquad
\mathcal L_\infty := \mathcal O_\infty\,\mathrm{d}y
=T\mathcal O_\infty\,\mathrm{d}x.
\]
Since $\operatorname{ord}_\infty(T)=-1$,  these lattices define a line bundle $\mathcal L :=\mathcal O_{\mathbf P^1_{\mathbf F_q}}(\infty)\,\mathrm{d}x$
on $\mathbf{P}^1_{\mathbf{F}_q}$. 
Note that $\mathcal{L} \otimes F = \operatorname{Cot}_{c_{\infty}}(X_0(\mathfrak p))$.
Moreover, by Lemma \ref{lem:cusp-inputs}(1) one has
\begin{align}\label{eq:finite}
c_{\infty}^*\Omega^1_{\mathcal{X}_0(\mathfrak{p})/\operatorname{Spec}(A)} \cong \mathcal{L}|_{\operatorname{Spec}(A)}. 
\end{align}

\begin{theorem}\label{thm:jetdivisor}
The map $\alpha_m \colon  \mathcal{E}_{\mathcal{B}}\otimes F
\to
\mathcal L^{\otimes m}\otimes F$  extends uniquely to a nonzero morphism
$\alpha_m \colon \mathcal{E}_{\mathcal{B}}\to\mathcal{L}^{\otimes m}$ of vector bundles on $\mathbf P^1_{\mathbf F_q}$.
Moreover, there is a unique effective divisor $D_m$ on $\mathbf P^1_{\mathbf F_q}$ with
$\operatorname{im}\alpha_m=\mathcal L^{\otimes m}(-D_m)$, and  $\deg D_m\le m\le g_{\mathfrak p}$.
\end{theorem}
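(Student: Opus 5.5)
We argue place by place. The generic map $\alpha_m$ extends to a morphism $\mathcal E_{\mathcal B}\to\mathcal L^{\otimes m}$ once we show that for every closed point $v$ of $\mathbf P^1_{\mathbf F_q}$ the stalk $\mathcal E_{\mathcal B,v}$ maps into $\mathcal L_v^{\otimes m}$. Since both are vector bundles on a normal curve and the map is determined generically, uniqueness is automatic. Nonzeroness holds because $\alpha_m\ne0$ on the generic fibre by the definition of $m$ in \eqref{eq:m}.

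\textbf{Finite places.} Take $v\ne\infty$. The morphism $u$ extends over $\mathcal O_v$ on the smooth locus near the cusp: $c_\infty$ is a smooth section of $\mathcal X_0(\mathfrak p)$ by Lemma~\ref{lem:cusp-inputs}(1), so the Néron mapping property of $\mathcal B$ applied to the smooth locus gives a map $\operatorname{Spf}\mathcal O_v[[x]]\to\widehat{\mathcal B}_{0}$, the formal group of $\mathcal B^0$ along the identity section. This uses the fact that $c_\infty$ maps to $0$, and that its image lands in $\mathcal B^0$ formally because the closed fibre of the cusp section maps to the identity. Hence, for formal parameters $z_i$ of $\mathcal B$ at $e_{\mathcal B}$ over $\mathcal O_v$ (a basis of $\mathcal E_{\mathcal B,v}$), we get $u^*z_i\in x\mathcal O_v[[x]]$. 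The coefficients $a_{i,m}$ of $x^m$ therefore lie in $\mathcal O_v$, which gives $\alpha_m(\mathcal E_{\mathcal B,v})\subset\mathcal O_v(\mathrm dx)^{\otimes m}=\mathcal L_v^{\otimes m}$. For $v=\mathfrak p$ we need to know that the cusp section lies in the smooth locus of $\mathcal X_0(\mathfrak p)$ over $\mathcal O_{\mathfrak p}$; this is also stated in the paper's description of the integral model.

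\textbf{The infinite place.} This is the main obstacle, since $\mathcal X_0(\mathfrak p)$ is not given over $\infty$. Here we use Lemma~\ref{lem:infinity} with $E=B_{F_\infty}$ and $f=u$. This is legitimate because $B$ has split purely toric reduction at $\infty$ by Theorem~\ref{thm:MW}, and the formal completion of $\mathcal B$ at the identity of its special fibre is the $\widehat{\mathcal E}_0$ of that lemma. The lemma gives $u^*X_i\in y\mathcal O_\infty[[y]]$, where the $X_i=Z_i-1$ form an $\mathcal O_\infty$-basis of $\mathcal E_{\mathcal B,\infty}$. Consequently the $y^m$-coefficients lie in $\mathcal O_\infty$, and $\alpha_m(\mathcal E_{\mathcal B,\infty})\subset\mathcal O_\infty(\mathrm dy)^{\otimes m}=\mathcal L_\infty^{\otimes m}$. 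One must check that passing from the $x$-expansion to the $y$-expansion only rescales the leading term: since $y=Tx$ with $T$ a constant, the leading degree $m$ is unchanged and the leading term is $a_{i,m}T^{-m}y^m$, as required.

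\textbf{The divisor and the degree bound.} The image of a nonzero map into a line bundle on a curve is a subsheaf of the form $\mathcal L^{\otimes m}(-D_m)$ with $D_m$ effective and unique. The image is a quotient of $\mathcal E_{\mathcal B}$, which is nef by Lemma~\ref{lem:nef}, and a line-bundle quotient of a nef bundle on $\mathbf P^1$ has nonnegative degree. Hence $0\le\deg\mathcal L^{\otimes m}(-D_m)=m-\deg D_m$, since $\deg\mathcal L=1$; this gives $\deg D_m\le m$.

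It remains to prove $m\le g_{\mathfrak p}$. Choose a nonzero differential $\omega=u^*\eta$ with $\eta\in H^0(B,\Omega^1_{B/F})$; such an $\omega$ exists and is nonzero by the injectivity of $u^*$ in Theorem~\ref{thm:MW}. Choose $\eta$ with $\eta(0)=\mathrm dz_i$ for some $i$ with $a_{i,m}\ne0$. If the characteristic $p$ does not divide $m$, then $u^*\mathrm dz_i$ vanishes to order exactly $m-1$ at $c_\infty$. A nonzero holomorphic differential has at most $2g_{\mathfrak p}-2$ zeros, so $m-1\le 2g_{\mathfrak p}-2$; this is weaker than required. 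We would therefore try Weierstrass gap theory instead: the vanishing orders at $c_\infty$ of $u^*H^0(B,\Omega^1)$, which are all at least $m-1$ up to $p$-divisibility issues, must occur among the $g_{\mathfrak p}$ gap values $\le 2g_{\mathfrak p}-1$. Alternatively, we would use that $u^*\mathfrak m_{B,0}$ generates $\mathfrak m^m$, so the morphism has ramification $m$ at $c_\infty$. We expect this degree bound, together with the characteristic-$p$ subtlety that $\mathrm d(x^m)$ may vanish, to be the delicate point, and would handle it via gaps of $H^0(X_0(\mathfrak p),\Omega^1)$ at $c_\infty$, using that some coordinate's expansion has $x^m$ as its first nonzero term.
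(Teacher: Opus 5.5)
Your extension argument and the bound $\deg D_m\le m$ are correct and are essentially the paper's. At a finite place, $u^*z_i\in x\mathcal O_v[[x]]\cap x^mF_v[[x]]=x^m\mathcal O_v[[x]]$ is the completed form of the paper's flatness argument giving $u_A^*\mathcal J\subseteq\mathcal I^m$. At $\infty$ you invoke Lemma~\ref{lem:infinity} exactly as the paper does. Nefness of $\mathcal E_{\mathcal B}$ then gives $m-\deg D_m\ge 0$.

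The genuine gap is the last inequality $m\le g_{\mathfrak p}$: you leave it unproved, and the tools you propose cannot give it. They all use only the vanishing of $u^*H^0(B,\Omega^1_{B/F})$ at the single point $c_\infty$. One nonzero differential with a zero of order $\ge m-1$ gives $m\le 2g_{\mathfrak p}-1$. Weierstrass gaps at $c_\infty$ (using $\ell_i\le 2i-1$), or Riemann--Roch plus Clifford for the divisor $(m-1)c_\infty$, improve this only to $m\le 2g_{\mathfrak p}-2\dim B+1$. That is $\le g_{\mathfrak p}$ only if $\dim B\ge (g_{\mathfrak p}+1)/2$, which you have no control over.

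The missing idea is a second point with the same vanishing, supplied by the Atkin--Lehner involution. By Theorem~\ref{thm:MW}, $\pi\circ W_{\mathfrak p}=-\pi$, so $u(W_{\mathfrak p}x)=-\pi([x]-[c_0])=-u(x)+u(c_0)$. Therefore $W_{\mathfrak p}^*(u^*\omega)=-u^*\omega$ for every invariant differential $\omega$ on $B$. Since $W_{\mathfrak p}$ interchanges $c_0$ and $c_\infty$ (Lemma~\ref{lem:genus}), this gives $\operatorname{ord}_{c_0}(u^*\omega)=\operatorname{ord}_{c_\infty}(u^*\omega)\ge m-1$. Choosing $\omega$ with $u^*\omega\ne 0$ (by injectivity of $u^*$) yields $2(m-1)\le\deg\operatorname{div}(u^*\omega)=2g_{\mathfrak p}-2$.

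Incidentally, your concern about $p\mid m$ is a red herring. Every local coordinate of $B$ at $0$ pulls back into $x^mF[[x]]$, so its differential lies in $x^{m-1}F[[x]]\,\mathrm dx$. Hence $\operatorname{ord}_{c_\infty}(u^*\omega)\ge m-1$ holds unconditionally. $p$-divisibility can only raise the order, and only this lower bound is needed.
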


\begin{proof}
Let $\mathcal I$ denote the ideal sheaf of the cusp section
$c_{\infty} \colon \operatorname{Spec}(A) \to \mathcal X_0(\mathfrak p)$.
By the N\'eron mapping property, $u$ extends to a morphism $u_A \colon \mathcal X_0(\mathfrak p)^{\mathrm{sm}} \to \mathcal{B}_A := \mathcal{B}|_{\operatorname{Spec}(A)}$. 
Then $u_A$ maps $c_{\infty}$ to the identity section of $\mathcal{B}_A$. 
Let $\mathcal J$ denote the ideal sheaf of the identity section
of $\mathcal{B}_A$.
Since $\widehat{\mathcal O}_{X_0(\mathfrak{p}), c_{\infty}} = F[[x]]$, the definition of $m$ shows that  $u_A^*\mathcal J\otimes_A F \subseteq \mathcal I^m\otimes_A F$. 
Since $\mathcal O_{\mathcal X_0(\mathfrak p)}/\mathcal I^m$
is flat over $A$, it follows that  $u_A^*\mathcal J\subseteq\mathcal I^m$. 
This gives an $A$-linear map
\[
\mathcal{E}_{\mathcal{B}}|_{\operatorname{Spec}(A)}
\to
\mathcal I^m/\mathcal I^{m+1}
\cong 
\mathcal L^{\otimes m}|_{\operatorname{Spec}(A)},
\]
where the isomorphism follows from \eqref{eq:finite}.
The induced map agrees with $\alpha_m$ on the generic fiber.
Thus $\alpha_m$ extends over $\operatorname{Spec}(A)$.
Moreover, $\alpha_m(\mathcal{E}_{\mathcal{B},\infty})
\subseteq\mathcal L_{\infty}^{\otimes m}$ follows from Lemma~\ref{lem:infinity}, applied to $u$ with parameter $y$.
Since $\mathbf P^1_{\mathbf F_q}$ is a smooth curve, these facts give a unique nonzero morphism $\alpha_m \colon \mathcal{E}_{\mathcal{B}}\to
\mathcal L^{\otimes m}$.

The nonzero subsheaf $\operatorname{im}\alpha_m\subseteq\mathcal L^{\otimes m}$ on $\mathbf P^1_{\mathbf F_q}$ is an invertible sheaf. Hence there exists a unique
effective divisor $D_m$ on $\mathbf P^1_{\mathbf F_q}$ such that $\operatorname{im}\alpha_m = \mathcal L^{\otimes m}(-D_m)$.
Since $\operatorname{im}\alpha_m$ is a quotient line
bundle of the nef bundle $\mathcal{E}_{\mathcal{B}}$ by Lemma \ref{lem:nef}, we have $0\le\deg(\operatorname{im}\alpha_m)
=m-\deg D_m$.

By Theorem~\ref{thm:MW}, one has 
\[
u(W_{\mathfrak p}x)= \pi([W_{\mathfrak p}x] - [c_{\infty}]) = - \pi([x] - [c_{0}]) = -u(x)+u(c_{0}), 
\]
and hence $W_{\mathfrak p}^*(u^*\omega)=-u^*\omega$ for every $\omega\in H^0(B,\Omega^1_{B/F})$. 
Every local coordinate of $u$ at $c_{\infty}$ has order at least $m$,
so its differential has order at least $m-1$. Hence
$\operatorname{ord}_{c_{0}}(u^*\omega)=\operatorname{ord}_{c_{\infty}}(u^*\omega)\ge m-1$ whenever $u^*\omega\ne0$.
By Theorem~\ref{thm:MW}, there exists an invariant
differential $\omega$ such that $u^*\omega$ is nonzero.
We conclude that $2g_{\mathfrak p}-2 = \deg\operatorname{div}(u^*\omega) \geq \operatorname{ord}_{c_{0}}(u^*\omega) + \operatorname{ord}_{c_{\infty}}(u^*\omega) = 2(m-1)$.
\end{proof}

\begin{remark}
With some additional work, one can construct a ``good''
% proper, flat, normal
model $\mathscr{X}_0(\mathfrak p)$ of $X_0(\mathfrak p)$ over
$\mathbf P^1_{\mathbf F_q}$ with a section
$\tilde{c}_{\infty} \colon \mathbf P^1_{\mathbf F_q}
\to\mathscr{X}_0(\mathfrak p)$ 
such that $\mathscr{X}_0(\mathfrak p)|_{\operatorname{Spec}(A)}
\cong \mathcal X_0(\mathfrak p)$,  $\tilde{c}_{\infty}|_{\operatorname{Spec}(A)} = c_{\infty}$, and 
\[
\tilde{c}_{\infty}^*
\Omega^1_{\mathscr{X}_0(\mathfrak p)/\mathbf P^1_{\mathbf F_q}}
\cong \mathcal L.
\]
Such a model would provide a more natural framework for the
construction of $\mathcal L$ and the proof of
Theorem~\ref{thm:jetdivisor}.
However, we use a direct, slightly ad hoc argument to avoid the additional technical details involved in constructing $\mathscr{X}_0(\mathfrak p)$. 
\end{remark}

\subsection{Primes at which Drinfeld modules do not have potentially good reduction}\label{sec:bad-reduction}

A rank-two Drinfeld module has good reduction at a finite place $v$
if, after an isomorphism over $F_v$, its coefficients are integral
and its leading coefficient is a unit. It has potentially good
reduction at $v$ if this holds after a finite extension of $F_v$.
For $\phi_T=T+g\tau+\Delta\tau^2$,  it is
well-known that 
$\phi$ has potentially good reduction at $v$ if and only if
$v(g^{q+1}/\Delta)\ge0$; see \cite[\S2 and Lemma~2.3]{Ish22}.
Potentially good reduction is invariant under isogeny and under
twist.

\begin{theorem}\label{thm:bad-reduction-bound}
If $x_0\in Y_0(\mathfrak p)(F)$, then one has
\[
 \sum_{\substack{\text{$v$  finite}, \ v\ne\mathfrak p\\
                  v(j(x_0))<0}}\deg(v) \le g_{\mathfrak p}.
\]
\end{theorem}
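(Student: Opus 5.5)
Given $x_0\in Y_0(\mathfrak p)(F)$, I would show that every finite place $v\ne\mathfrak p$ with $v(j(x_0))<0$ lies in the support of the effective divisor $D_m$ of Theorem~\ref{thm:jetdivisor}. Since places are distinct closed points of $\mathbf P^1_{\mathbf F_q}$, this gives
\[
\sum_{v}\deg(v)\le\deg D_m\le m\le g_{\mathfrak p}.
\]
This is the formal immersion argument, run with the $m$-th order jet $\alpha_m$ in place of the first-order differential.

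First, I would extend $x_0$ to a section. By the valuative criterion, $x_0$ extends to $\tilde x_0\colon\operatorname{Spec}A\to\mathcal X_0(\mathfrak p)$. Fix a finite $v\ne\mathfrak p$ with $v(j(x_0))<0$. Since $j\colon\mathcal X_0(\mathfrak p)\to\mathbf P^1_A$ is finite, $\tilde x_0(k_v)$ lies over $j=\infty$. Over $v\ne\mathfrak p$ the two cusps $c_0,c_\infty$ are the only points above $j=\infty$ in the fiber, because $j$ has degree $d+1$ and is unramified over $\infty$ at $c_\infty$ and ramified of index $q^d$ at $c_0$. Applying $W_{\mathfrak p}$ if necessary, I may assume $\tilde x_0(k_v)=c_\infty(k_v)$. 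Doing this separately for different $v$ is harmless, since $W_{\mathfrak p}$ is an involution of the whole setup with $\pi\circ W_{\mathfrak p}=-\pi$. Lemma~\ref{lem:cusp-inputs}(1) then says $x(\tilde x_0)\in\mathfrak m_v$, so $x(x_0)=\varpi^k w$ with $k\ge1$. In fact $k=-v(j(x_0))$, because $j\in -x^{-1}+A[[x]]$.

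Second, I would use the Mordell--Weil input. The point $P:=u(x_0)$ lies in $B(F)$, which is finite of order prime to $p$ by Theorem~\ref{thm:MW}. Using the N\'eron model at $v$, the sections $P$ and $0$ agree on the special fiber, since $u_A(\tilde x_0)$ and $u_A(c_\infty)=0$ both reduce to $0$. Torsion of order prime to $p$ injects under reduction into the special fiber, so $P=0$.

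Third, I would derive the contradiction with $v\notin\operatorname{supp}D_m$. Suppose $v\notin\operatorname{supp}D_m$. Then $\alpha_m$ is surjective at $v$, so some invariant differential, or equivalently some local parameter $z$ of $\mathcal B$ along the zero section, satisfies $u^*z=a\,x^m+(\text{higher terms})$ in $\mathcal O_v[[x]]$ with $a\in\mathcal O_v^\times$. Evaluating at $x=x(x_0)=\varpi^k w$ gives $v(z(P))=mk<\infty$. Hence $z(P)\neq0$ and $P\ne0$, contradicting the second step. So every such $v$ lies in $\operatorname{supp}D_m$, which proves the bound.

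I expect the main obstacle to be the first step. One must justify that the special fiber over $j=\infty$ at $v\ne\mathfrak p$ consists only of the two cusp sections. One must also check that $\tilde x_0$ lands in the smooth locus near $c_\infty$, so that $u_A$ is defined there and the expansion in $\mathcal O_v[[x]]$ can legitimately be evaluated; here the smoothness of the cusp sections from Lemma~\ref{lem:cusp-inputs}(1) should suffice. The reduction-injectivity of prime-to-$p$ torsion at $v$ is standard.
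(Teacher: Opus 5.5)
Your proposal is correct and follows essentially the same route as the paper: you extend $x_0$ over $\mathcal O_v$, use $W_{\mathfrak p}$ to land on $c_\infty(v)$, and kill $u(x_0)$ using the prime-to-$p$ order of $B(F)$ together with the absence of prime-to-$p$ torsion in the kernel of reduction. You then use surjectivity of $\alpha_{m,v}$ to get $z$ with $u_v^*z=x^m\cdot(\text{unit})$, and evaluating at $\tilde x_0$ gives the contradiction; you phrase this as $v(z(P))=mk$, while the paper concludes $\tilde x_0^*(x)=0$. One small correction: $j$ has degree $q^d+1$, not $d+1$, and the cleanest reason that only the two cusps lie over $j=\infty$ in the fiber at $v$ is that finiteness of $j$ rules out vertical components of $j^{-1}(\infty_A)$, so its support is exactly the two horizontal cusp sections.
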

\begin{proof}
Let $v\ne\mathfrak p$ be a finite place with $v(j(x_0))<0$.
By the properness of $\mathcal X_0(\mathfrak p)$, the point $x_0\in X_0(\mathfrak p)(F)$ extends uniquely to an $\mathcal O_v$-point $\widetilde{x}_0\in\mathcal X_0(\mathfrak p)(\mathcal O_v)$. 
Since $j^{-1}(\infty_{\mathcal{O}_v})\cap \mathcal X_0(\mathfrak p)_{k_v} = \{c_{0}(v),c_{\infty}(v)\}$, where $j \colon \mathcal{X}_0(\mathfrak p)_{\mathcal O_v}
\to \mathbf{P}^1_{\mathcal O_v}$, one has $\tilde{x}_0(v) \in \{c_{0}(v),c_{\infty}(v)\}$. 
The involution $W_{\mathfrak p}$ extends over $A[1/P]$ and interchanges the cusp sections $c_{0}$ and $c_{\infty}$; see \cite[\S5]{Gek86}.
Replacing $x_0$ by $W_{\mathfrak p}x_0$  if necessary,
we may assume that $\tilde{x}_0(v) = c_{\infty}(v)$. 
Then the point $u(x_0)$ belongs both to $B(F)$ and
to $\ker(\mathcal B(\mathcal O_v)\to\mathcal B(k_v))$.
The former has finite order prime to $q$ by Theorem~\ref{thm:MW}.
The latter has no prime-to-$q$ torsion, since multiplication by an
integer prime to $q$ is an automorphism of the smooth formal group.
Hence $u(x_0) = e_{B}$.

Suppose that $v\notin\operatorname{Supp}D_m$.
By Theorem~\ref{thm:jetdivisor}, the map $\alpha_{m,v}\colon
\mathcal E_{B,v}\to
\mathcal O_v \, (\mathrm{d}x)^{\otimes m}$ is surjective.
Let $u_v \colon \mathcal X_0(\mathfrak p)_{\mathcal{O}_v} \to  \mathcal{B}_{\mathcal{O}_v}$ denote the unique extension of $u$. 
Choose formal coordinates $Z_1,\ldots,Z_g$
vanishing along the identity section $e_{\mathcal{B}_{\mathcal{O}_v}}$ of $\mathcal{B}_{\mathcal{O}_v}$, where $g=\dim B$. 
Recall $\widehat{\mathcal O}_{\mathcal X_0(\mathfrak p),c_{\infty}(v)}  \cong \mathcal O_v[[x]]$ by Lemma \ref{lem:cusp-inputs}(1). 
By the definition of $m$, one has 
\[
u_v^*Z_i=\sum_{n\ge m}a_{i,n}x^n, \qquad 
a_{i,n}\in\mathcal O_v, \qquad 
\alpha_{m,v} \, (\mathrm{d}Z_i) = a_{i,m} \, (\mathrm{d}x)^{\otimes m}.
\]
It follows from the surjectivity of $\alpha_{m,v}$ that there are  $b_1,\ldots,b_g\in\mathcal O_v$
such that $\sum_{i=1}^g b_i a_{i,m}=1$.
Consequently, for $z=\sum_{i=1}^g b_iZ_i$, one has
\[
u_v^*z=x^m(1+xh(x)),
\qquad h(x)\in\mathcal O_v[[x]].
\]
Since $\tilde{x}_0(v) = c_{\infty}(v)$, 
the section $\tilde{x}_0$ defines $\tilde{x}_0^{*} \colon \mathcal{O}_v[[x]] \to \mathcal{O}_v$. 
Hence one has $\tilde{x}_0^{*}(x) \in \mathfrak m_v$, where $\mathfrak m_v \subset \mathcal O_v$ is the maximal ideal.
Thus $1+\tilde{x}_0^{*}(x)h(\tilde{x}_0^{*}(x)) \in\mathcal O_v^\times$. 
Since $u(x_0)=e_{B}$, one has $u_v(\tilde{x}_0) = e_{\mathcal{B}_{\mathcal{O}_v}}$. 
This gives
\[
0 =  e_{\mathcal{B}_{\mathcal{O}_v}}^*(z) = \tilde{x}_0^{*}(u^*_vz) = \tilde{x}_0^{*}(x)^m  \bigl(1+\tilde{x}_0^{*}(x)h(\tilde{x}_0^{*}(x))\bigr).
\]
Hence $\tilde{x}_0^{*}(x) =0$, so $\tilde{x}_0^* = c_{\infty}^*$, contrary to $x_0\in Y_0(\mathfrak p)(F)$.
This proves $v\in\operatorname{Supp}D_m$.
Since $D_m$ is effective, it follows from Theorem \ref{thm:jetdivisor} that  \[
\sum_{\substack{v\text{ finite},\ v\ne\mathfrak p\\v(j(x_0))<0}}
\deg v
\le
\sum_{v\in\operatorname{Supp}D_m}\deg v
\le\deg D_m \le g_{\mathfrak p}.
\]
\end{proof}

\section{Isogeny characters}\label{sec:characters}

Let $\phi/F$ be a rank-two Drinfeld module with an $F$-rational cyclic $\mathfrak p$-subgroup $C$.
Put $G_F := \operatorname{Gal}(F^{\mathrm{sep}}/F)$.
The action on $C$ defines an isogeny character
$\lambda \colon G_F\to(A/\mathfrak p)^\times$.
Let $\kappa \colon G_F\to(A/\mathfrak p)^\times$ be the Carlitz character,
normalized by
\[
\kappa(\operatorname{Frob}_{(H)}) = H \bmod{\mathfrak{p}}
\]
for any monic irreducible polynomial $H \not\in \mathfrak{p}$.
Here $\operatorname{Frob}_{(H)}$ denotes the arithmetic Frobenius at $(H)$.
We choose decomposition and inertia groups $G_{F_v}$ and $I_v$ inside $G_F$.

In this section, we investigate the properties of the isogeny
character $\lambda$ that will be needed in the sequel.
We note that  the results presented in this section
are based on the work of Ishii \cite{Ish22}.
We recall the relevant results and arguments from his work
and formulate them in a form suitable for our purposes. 

\begin{lemma}\label{lem:Ishii}
There are $k\in\mathbf Z/(q^d-1)\mathbf Z$ and a character $\alpha \colon G_F\to(A/\mathfrak p)^\times$ such that
\[
\lambda=\alpha\kappa^k,
\qquad \alpha|_{I_{\mathfrak p}}=1,
\qquad \alpha^{\nu_d}=1,
\qquad
\nu_d := \begin{cases}q-1,&\text{if $d$ is odd},\\q^2-1,& \text{if $d$ is even}.
\end{cases}
\]
If $\phi$ has potentially good reduction at $\mathfrak p$, then
\[
\begin{cases}
(q+1)k\equiv n\pmod{(q^d-1)/(q-1)},\quad 0\le n\le q+1,&\text{if $d$ is odd},\\
k\equiv0,1\pmod{(q^d-1)/(q^2-1)},&\text{if $d$ is even}.
\end{cases}
\]
\end{lemma}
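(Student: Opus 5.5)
The plan is to follow Ishii's analysis of isogeny characters: restrict $\lambda$ to the inertia group $I_{\mathfrak p}$, identify this restriction with a power of $\kappa$, and then use global class field theory over $F=\mathbf F_q(T)$ to control the quotient $\alpha$.

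\textbf{Step 1 (inertia at $\mathfrak p$).} The Carlitz character restricted to $I_{\mathfrak p}$ is the fundamental character of level $d$. Its image is all of $(A/\mathfrak p)^\times$, because $\mathfrak p$ is totally ramified in $F(C[\mathfrak p])$, where $C$ denotes the Carlitz module. Tame inertia has procyclic image in $(A/\mathfrak p)^\times$, so $\lambda|_{I_{\mathfrak p}}$ factors through this tame quotient. It is therefore a power $\kappa^k|_{I_{\mathfrak p}}$ for a unique $k\in\mathbf Z/(q^d-1)$.

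\textbf{Step 2 (the character $\alpha$).} Put $\alpha:=\lambda\kappa^{-k}$, so that $\alpha|_{I_{\mathfrak p}}=1$ by construction. To bound the order of $\alpha$, I would first show that $\lambda$ is tamely ramified or trivial on inertia away from $\mathfrak p$ and $\infty$, up to a bounded exponent. The point is that at $v\neq\mathfrak p$ the $\mathfrak p$-torsion is unramified up to twist: for potentially good reduction one gets a finite, prime-to-$p$ inertia image, and for Tate uniformization one gets an unramified-or-quadratic-type character. In each case the determinant relation $\det\rho_{\phi,\mathfrak p}=\kappa\cdot(\text{finite-order twist})$ forces $\alpha^2$ or $\alpha^{q-1}$ to be unramified at $v$. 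Concretely, $\lambda\lambda'=\det=\kappa\,\varepsilon$ with $\varepsilon$ taking values in $\mathbf F_q^\times$, since the leading coefficient $\Delta$ contributes a Kummer character of order dividing $q-1$. Combining this with the inertia image, which lies in $\mathbf F_q^\times$ or $\mathbf F_{q^2}^\times\cap(A/\mathfrak p)^\times$, shows that $\alpha$ has values in the subgroup of order $\nu_d$. This subgroup is $\mathbf F_q^\times$ when $d$ is odd and $\mathbf F_{q^2}^\times$ when $d$ is even. Hence $\alpha^{\nu_d}=1$.

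\textbf{Step 3 (potentially good reduction at $\mathfrak p$).} Assume $\phi$ has potentially good reduction at $\mathfrak p$. Over a tamely ramified extension of ramification index $e\mid q^2-1$ (or $q-1$), the reduction has height $1$ or $2$.
\begin{itemize}
\item In the height-$2$ (supersingular) case, the $\mathfrak p$-torsion is a formal $\mathbf F_{q^{2d}}$-type module. The inertia acts through level-$2d$ fundamental characters, and Raynaud-type weight bounds make the tame exponents lie in $\{0,\dots,q+1\}$ after multiplying by $(q+1)$.
\item In the height-$1$ (ordinary) case, the connected–étale sequence gives $\lambda|_{I_{\mathfrak p}}\in\{1,\kappa\}$ up to the twist of order $e$.
\end{itemize}
Reading off $k$ modulo $(q^d-1)/\nu_d$ then kills the twist. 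In the even-$d$ case the supersingular case cannot produce a character of level $d$ unless the exponents reduce to $0$ or $1$, which gives $k\equiv 0,1$. In the odd-$d$ case one obtains $(q+1)k\equiv n$ with $0\le n\le q+1$.

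The main obstacle is Step 3: the precise description of the tame inertia action on the $\mathfrak p$-torsion of a formal Drinfeld module of height $2$ over a ramified base, together with the required weight bounds. Here I would cite Ishii's computation rather than redo it.
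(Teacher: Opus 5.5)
\textbf{Main gap: the last sentence of Step~2 does not follow.} Your Step~1 is correct. Deferring the local analysis at $\mathfrak p$ in Step~3 to Ishii matches the paper, whose proof consists entirely of citing \cite[Lemma~3.4, Proposition~3.5, Lemma~3.6]{Ish22}. In Step~2, however, your bounds on $\lambda(I_v)$ for finite $v\ne\mathfrak p$, together with $\alpha|_{I_{\mathfrak p}}=1$, only show that $\alpha^{\nu_d}$ is unramified at every finite place. They say nothing about the values of $\alpha$ on Frobenius elements, so you cannot conclude that $\alpha$ takes values in the subgroup of order $\nu_d$.

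Here is a concrete obstruction. For any $\beta\in(A/\mathfrak p)^\times$, the constant-field character $\sigma\mapsto\beta^{\deg\sigma}$ of $\operatorname{Gal}(\overline{\mathbf F}_qF/F)$ is unramified everywhere, including at $\infty$, and passes every local test you impose. In fact, since $F^\times\backslash\mathbb A_F^\times/\prod_{v\ne\infty}\mathcal O_v^\times\cong F_\infty^\times/\mathbf F_q^\times$, every character of prime-to-$p$ order that is unramified at all finite places has this form.

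You need two further ingredients:
\begin{itemize}
\item[(a)] Control of $\lambda$ and $\kappa$ on the full decomposition group $G_{F_\infty}$, not just on inertia. You explicitly set $\infty$ aside and never return to it. One has $\kappa(G_{F_\infty})\subseteq\mathbf F_q^\times$ because $\widetilde\pi^{q-1}\in F_\infty$. Also, $G_{F_\infty}$ acts on the period lattice $\Lambda\cong A^2$ of $\phi$ through a finite subgroup of $\mathrm{GL}_2(A)$, whose characteristic polynomials lie in $\mathbf F_q[Z]$. Hence $\lambda(G_{F_\infty})\subseteq\mathbf F_{q^2}^\times\cap(A/\mathfrak p)^\times$, which is exactly the group of order $\nu_d$.
\item[(b)] The global input that $A$ is a principal ideal domain and $\infty$ has degree one. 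Then a character unramified at all finite places and trivial on $G_{F_\infty}$ is trivial.
\end{itemize}
With (a) and (b), $\alpha^{\nu_d}=1$ follows; without them it does not. The determinant relation $\lambda\lambda'=\kappa\varepsilon$ you invoke plays no role here, since it only couples $\lambda$ with the quotient character $\lambda'$.

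\textbf{Secondary issue: your heuristic for Step~3 is inaccurate.} Let $L/F_{\mathfrak p}$ be the tame extension of ramification index $e\mid q^2-1$ over which $\phi$ acquires good reduction. In the ``level-$2d$'' picture you describe, all nonzero $\mathfrak p$-torsion points have the same valuation. That configuration admits no $I_L$-stable $A/\mathfrak p$-line, because this would require $q^d+1\mid e$. A rational line in the supersingular case instead arises in the two-slope (canonical subgroup) configuration. There $I_L$ acts through $\theta^{b}$ with $0<b<e$, where $\theta$ is the level-$d$ fundamental character of $I_L$. For even $d$, the congruence $ek\equiv b\pmod{q^d-1}$ together with $e\mid q^d-1$ excludes this case outright.
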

\begin{proof}
See \cite[Lemma~3.4, Proposition~3.5, and Lemma~3.6]{Ish22}.
\end{proof}

In what follows, let $k$ be the integer as in Lemma~\ref{lem:Ishii}.

\begin{lemma}\label{lem:bad-reduction-character}
If $v(j(\phi))<0$ at a finite place $v$, then
\[
\lambda^{q-1}|_{G_{F_v}}\in\{1,\kappa^{q-1}|_{G_{F_v}}\}.
\]
\end{lemma}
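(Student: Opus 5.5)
We argue via the Tate uniformization of $\phi$ at $v$. Note first that $v\neq\mathfrak p$ or $v=\mathfrak p$ are both allowed in the statement, and the argument is local at $v$.

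\textbf{Step 1: Tate uniformization.} Since $v(j(\phi))<0$, the module $\phi$ does not have potentially good reduction at $v$. After a finite (tamely ramified, degree dividing $q-1$) extension $L/F_v$, the module $\phi$ has stable reduction of rank one. By the theory of Tate--Drinfeld uniformization, over $L$ there is a rank-one Drinfeld module $\psi$ with good reduction and a $\psi$-lattice $\Gamma=\psi$-$A$-submodule of rank one such that $\phi\cong\psi/\Gamma$ analytically. The required twist is by a character $\chi\colon G_{F_v}\to\mathbf F_q^\times$, since the automorphism group of a rank-two Drinfeld module with generic $j$ is $\mathbf F_q^\times$. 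Thus, up to a twist by $\chi$, $\phi[\mathfrak p]$ sits in an exact sequence of $G_{F_v}$-modules
\[
0\to\psi[\mathfrak p]\to\phi[\mathfrak p]\to \tfrac{1}{P}\Gamma/\Gamma\to0 .
\]
Here the Galois group acts on $\psi[\mathfrak p]$ by a character $\kappa\cdot\epsilon$. The factor $\epsilon$ is unramified with values in $\mathbf F_q^\times$, since $\psi$ is a rank-one module over $F_v$, hence a twist of the Carlitz module by an element whose class modulo $(q-1)$-th powers gives an $\mathbf F_q^\times$-valued character. The Galois group acts trivially on $\Gamma/P\Gamma$, since $\Gamma$ is generated by an $F_v$-rational point up to the same kind of twist. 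I would recall this from Ishii's treatment \cite[\S2]{Ish22}, where exactly this description is used.

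\textbf{Step 2: Identify $C$.} The cyclic subgroup $C$ is $G_{F_v}$-stable. Hence the character by which $G_{F_v}$ acts on $C$ is either the character on the sub $\psi[\mathfrak p]$, namely $\chi\epsilon\kappa$, or the character on the quotient, namely $\chi\epsilon'$. The second case requires $C$ to map isomorphically onto the quotient. The only delicate case is when the extension splits and the two characters coincide, but then both descriptions still give one of these two characters. In all cases,
\[
\lambda|_{G_{F_v}}=\chi'\kappa|_{G_{F_v}}\quad\text{or}\quad \lambda|_{G_{F_v}}=\chi'
\]
for some character $\chi'$ with values in $\mathbf F_q^\times\subset(A/\mathfrak p)^\times$.

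\textbf{Step 3: Conclude.} Raising to the $(q-1)$-th power kills $\chi'$, giving $\lambda^{q-1}|_{G_{F_v}}\in\{1,\kappa^{q-1}|_{G_{F_v}}\}$.

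\textbf{Main obstacle.} The main obstacle is Step 1: making precise that every discrepancy (the twist needed to reach stable reduction, the action on the rank-one module $\psi$, and the action on the lattice) is given by $\mathbf F_q^\times$-valued characters. The key facts are the following.
\begin{enumerate}[label=(\roman*)]
\item $\operatorname{Aut}(\phi_{\overline F_v})=\mathbf F_q^\times$, since $j\neq 0$ when $v(j)<0$.
\item Rank-one Drinfeld modules over $F_v$ are twists of the Carlitz module by $\mathbf F_q^\times$-valued characters.
\item The $\mathfrak p$-torsion of the Carlitz module over $F_v$ is given by $\kappa$.
\end{enumerate}
Fact (iii) holds also at $v=\mathfrak p$ by the global definition of $\kappa$. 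I would follow Ishii's argument in \cite{Ish22} for the Tate uniformization input.
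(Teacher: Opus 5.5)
Your proposal is correct and follows essentially the same route as the paper. Both twist by an $\mathbf F_q^\times$-valued character to reach stable rank-one reduction, and use Drinfeld's Tate uniformization to get $0\to\psi[\mathfrak p]\to\phi[\mathfrak p]\to\Lambda/\mathfrak p\Lambda\to0$, with $\kappa$ times an $\mathbf F_q^\times$-valued character on the sub and an $\mathbf F_q^\times$-valued character on the quotient; then $C$ is either the sub or maps isomorphically onto the quotient, and one takes $(q-1)$-st powers. The one loose point is your Step 1 claim that Galois acts trivially on $\Gamma/P\Gamma$: in general it acts through $\operatorname{Aut}_A(\Gamma)=A^\times=\mathbf F_q^\times$, which is what you actually use in Step 2 and is exactly the paper's justification.
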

\begin{proof}
Replacing $\phi$ by an $\mathbf F_q^\times$-twist
changes the isogeny character $\lambda$ by multiplication
by a character with values in $\mathbf F_q^\times$.
Hence it does not change $\lambda^{q-1}$.
Such a twist puts $\phi$ in the form
$\phi_T=T+\tau+j(\phi)^{-1}\tau^2$, with stable reduction of rank one.
By Drinfeld's Tate uniformization \cite[Proposition~7.2]{Dri74}, we obtain the following Galois-equivariant short exact sequence 
\[
0\to\psi[\mathfrak p](F_v^{\mathrm{sep}})
\to\phi[\mathfrak p](F_v^{\mathrm{sep}})
\to\Lambda/\mathfrak p\Lambda\to0,
\]
where $\psi$ is a rank-one Drinfeld module with good reduction and $\Lambda$ is a Galois-stable rank-one $A$-lattice. 
The character on $\psi[\mathfrak p]$ is the product of
$\kappa$ with a character valued in $\mathbf F_q^\times$, whereas the action on
$\Lambda$ takes values in $A^\times=\mathbf F_q^\times$.
The subgroup $C$ either equals $\psi[\mathfrak p]$ or maps isomorphically
onto $\Lambda/\mathfrak p\Lambda$. Taking $(q-1)$-st powers proves the claim.
\end{proof}

\begin{lemma}\label{lem:power}
The following assertions hold.
\begin{enumerate}[label=(\arabic*)]
\item If $d$ is even, then $\lambda^{q^2-1}\in\{1,\kappa^{q^2-1}\}$.
\item If $d$ is odd and $v_{\mathfrak p}(j(\phi))<0$, then
$\lambda^{q-1}\in\{1,\kappa^{q-1}\}$.
\item If $d>q$ is odd and $\phi$ does not have potentially good reduction at
some finite place of degree one, then
$\lambda^{q-1}\in\{1,\kappa^{q-1}\}$.
\end{enumerate}
\end{lemma}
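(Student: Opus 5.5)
The plan is to write $\lambda=\alpha\kappa^k$ as in Lemma~\ref{lem:Ishii}. Then everything reduces to understanding the character $\alpha$ and the exponent $k$ modulo the appropriate period. Two facts drive all three parts. First, $\alpha$ is unramified at $\mathfrak p$. Second, since $\lambda$ comes from a Drinfeld module over $F$, it is unramified outside $\mathfrak p$ and $\infty$, up to $\mathbf F_q^\times$-valued characters. Hence after raising to the power $\nu_d$ (resp.\ $q-1$) we are left with a character unramified outside $\infty$ and tamely controlled at $\infty$. Such characters of $G_F$ with values in $(A/\mathfrak p)^\times$ are trivial or are powers of $\kappa$, by class field theory for $F$: the maximal abelian extension unramified outside $\mathfrak p\infty$ with the relevant conditions at $\infty$ is the Carlitz $\mathfrak p$-cyclotomic field extended by constants. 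So the task is to pin down $k$.

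\textbf{Part (1), $d$ even.} Here $\alpha^{q^2-1}=1$, so $\lambda^{q^2-1}=\kappa^{(q^2-1)k}$. This depends only on $k$ modulo $(q^d-1)/(q^2-1)$. I would show that $k\equiv 0,1$ modulo that number without assuming potentially good reduction at $\mathfrak p$. If $\phi$ has potentially good reduction at $\mathfrak p$, this is the second half of Lemma~\ref{lem:Ishii}. Otherwise, $v_{\mathfrak p}(j)<0$ and Lemma~\ref{lem:bad-reduction-character} at $v=\mathfrak p$ gives $\lambda^{q-1}|_{I_{\mathfrak p}}\in\{1,\kappa^{q-1}|_{I_{\mathfrak p}}\}$. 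Since $\alpha|_{I_{\mathfrak p}}=1$ and $\kappa|_{I_{\mathfrak p}}$ is surjective onto $(A/\mathfrak p)^\times$ (it is totally ramified there), restricting $\lambda=\alpha\kappa^k$ to $I_{\mathfrak p}$ gives $(q-1)k\equiv 0$ or $q-1\pmod{q^d-1}$. Hence $k\equiv 0,1\pmod{(q^d-1)/(q-1)}$, which is stronger than needed.

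\textbf{Part (2).} The same inertia argument gives $k\equiv 0,1\pmod{(q^d-1)/(q-1)}$, so $\kappa^{(q-1)k}\in\{1,\kappa^{q-1}\}$. For odd $d$ we have $\alpha^{q-1}=1$, so $\lambda^{q-1}=\kappa^{(q-1)k}$ and the claim follows.

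\textbf{Part (3).} Let $v=(T-a)$ be the degree-one place where $\phi$ lacks potentially good reduction; note $v\neq\mathfrak p$ since $d>1$. By Lemma~\ref{lem:bad-reduction-character}, $\lambda^{q-1}(\operatorname{Frob}_v)\in\{1,\kappa^{q-1}(\operatorname{Frob}_v)\}$. Since $\lambda^{q-1}=\kappa^{(q-1)k}$ and $\kappa(\operatorname{Frob}_v)=(T-a)\bmod\mathfrak p$, this reads $(T-a)^{(q-1)k}\equiv 1$ or $(T-a)^{q-1}\pmod{\mathfrak p}$. Put $k'=k$ or $k-1$, so that $(T-a)^{(q-1)k'}\equiv1$. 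If $\phi$ has potentially good reduction at $\mathfrak p$, then Lemma~\ref{lem:Ishii} gives $(q+1)k\equiv n\pmod{(q^d-1)/(q-1)}$ with $0\le n\le q+1$. If not, part (2) already gives the result.

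Raising to the power $q+1$ (invertible modulo $(q^d-1)/(q-1)$ when $d$ is odd, since $\gcd(q+1,(q^d-1)/(q-1))$ divides $\gcd(q+1,d)$, which requires care) gives $(T-a)^{(q-1)n'}\equiv 1\pmod{\mathfrak p}$ with $n'=n$ or $n-(q+1)$, so $|n'|\le q+1$. I then need $(T-a)^{(q-1)n'}\not\equiv1$ unless $n'\equiv 0$. Write $(T-a)^{(q-1)n'}=1$ in $A/\mathfrak p$, equivalently $(T-a)^{n'}\in\mathbf F_q^\times$ in $A/\mathfrak p$. For $0<n'\le q+1<d$ the element $(T-a)^{n'}-c$ is a nonzero polynomial of degree less than $d$, so it cannot be divisible by $P$. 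Negative $n'$ is handled by symmetry. Hence $n'=0$, so $(q+1)k\equiv 0$ or $q+1$, and then $k\equiv 0,1$ modulo $(q^d-1)/(q-1)$ after inverting $q+1$.

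The main obstacle is this last inversion of $q+1$, when $\gcd(q+1,(q^d-1)/(q-1))>1$. If it fails, I would instead work directly with $\lambda^{q^2-1}$, or argue that the conclusion $\kappa^{(q-1)k}\in\{1,\kappa^{q-1}\}$ only needs $(q-1)(q+1)k$ information combined with the order of $\kappa^{q-1}$ being $(q^d-1)/(q-1)$. I expect $d>q$ (together with $d$ odd) to make the gcd harmless.
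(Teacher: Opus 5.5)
Your parts (1) and (2) are exactly the paper's argument: Lemma~\ref{lem:Ishii} in the potentially good case, and otherwise restriction to $I_{\mathfrak p}$ via Lemma~\ref{lem:bad-reduction-character}, using that $\kappa(I_{\mathfrak p})=(A/\mathfrak p)^\times$. Your opening class-field-theory paragraph is never used. Your plan for (3) is also the paper's: evaluate at the Frobenius of a degree-one bad place, combine with $(q+1)k\equiv n$, bound $n$ by a degree argument, and invert $q+1$.

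The gap is at the step you yourself call the main obstacle. You never show that $q+1$ is invertible modulo $N:=(q^d-1)/(q-1)$, and without it you cannot pass from $(q+1)k\equiv 0,q+1$ to $k\equiv 0,1 \pmod N$. The reason you suggest, that the gcd divides $\gcd(q+1,d)$, reflects the behaviour of $N$ modulo $q-1$ (where $N\equiv d$), not modulo $q+1$. It would not suffice anyway: $q=4$, $d=5$ gives $\gcd(q+1,d)=5$. Your fallback of working with $\lambda^{q^2-1}$ only controls $(q+1)k$, so it yields the $\lambda^{q^2-1}$ statement and not the $\lambda^{q-1}$ one. The missing fact is one line: for odd $d$,
\[
N=\sum_{i=0}^{d-1}q^i\equiv\sum_{i=0}^{d-1}(-1)^i=1\pmod{q+1},
\]
so $\gcd(q+1,N)=1$. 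This holds for all odd $d$, without using $d>q$, and it is exactly what the paper invokes.

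There is also a smaller slip. The hypothesis $d>q$ only gives $d\ge q+1$, and $d=q+1$ is odd when $q$ is even. So your claim $q+1<d$ can fail, and your degree argument does not cover $|n'|=q+1$. You do not actually need $n'=0$; you need $n\in\{0,q+1\}$, and $|n'|=q+1$ already forces this, since $n'=q+1$ means $n=q+1$ and $n'=-(q+1)$ means $n=0$.

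The paper's bookkeeping avoids the issue. From $h^{(q-1)n}\in\{1,h^{q^2-1}\}$ one gets $h^{n}\in\mathbf F_q^\times$ or $h^{q+1-n}\in\mathbf F_q^\times$. For $1\le n\le q$ both exponents lie in $[1,q]$, hence are $<d$, contradicting $[\mathbf F_q(h):\mathbf F_q]=d$. With these two repairs your argument coincides with the paper's.
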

\begin{proof}
If $\phi$ has potentially good reduction at $\mathfrak p$, (1) follows
from Lemma~\ref{lem:Ishii}.
If $v_{\mathfrak p}(j(\phi))<0$, since $\kappa(I_{\mathfrak p})=(A/\mathfrak p)^\times$,
Lemmas~\ref{lem:Ishii} and \ref{lem:bad-reduction-character} give
\[
k(q-1)\equiv0,q-1\pmod{q^d-1}.
\]
This proves (1) and (2).
For (3), we may therefore assume potentially good reduction at $\mathfrak p$.
Let $H$ be a degree-one prime with $v_{(H)}(j(\phi)) < 0$. 
Put $h := \kappa(\operatorname{Frob}_{(H)}) \in A/\mathfrak{p}$.
Then it follows from Lemma \ref{lem:bad-reduction-character} that $h^{(q-1)k}\in\{1,h^{q-1}\}$.
Hence $h^{(q^2-1)k}\in\{1,h^{q^2-1}\}$, and Lemma \ref{lem:Ishii} implies $h^{(q-1)n}\in\{1,h^{q^2-1}\}$ for some $0 \leq n \leq q+1$.
In particular,  either $h^n$ or $h^{q+1-n}$ belongs to
$\mathbf F_q^\times$.
If $1\le n\le q$, this contradicts $[\mathbf F_q(h):\mathbf F_q]=d>q$.
Hence $n\in\{0,q+1\}$.
Since $\gcd(q+1,(q^d-1)/(q-1))=1$,
we obtain $k\equiv0,1\pmod{(q^d-1)/(q-1)}$ by Lemma \ref{lem:Ishii}.
\end{proof}

\begin{lemma}\label{lem:linear-good-reduction}
Suppose that $\phi$ has potentially good reduction at a finite
prime of degree one. Then $\phi$ has potentially good reduction at
$\mathfrak p$, and $d$ is odd.
If also $d>q$, then $q$ is odd and
\[
2k\equiv1\pmod{(q^d-1)/(q-1)}.
\]
\end{lemma}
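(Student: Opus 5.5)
The plan is to compare the isogeny character $\lambda$ with the Frobenius of the reduction of $\phi$ at a degree-one prime. Let $(H)$ be a degree-one prime at which $\phi$ has potentially good reduction, and put $h:=\kappa(\operatorname{Frob}_{(H)})=H \bmod \mathfrak p$. Since $\mathfrak p$ is prime of degree $d$, we have $\mathbf F_q(h)=A/\mathfrak p$, of degree $d$ over $\mathbf F_q$.

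\emph{Step 1: the Frobenius relation at $(H)$.} After a finite extension of $F_{(H)}$, whose inertia acts on $\phi[\mathfrak p]$ through a group of order prime to $p$, $\phi$ acquires good reduction $\bar\phi$ over a finite field $\mathbf F_q$ or a small extension. I would use Ishii's description \cite{Ish22}: up to $\mathbf F_q^\times$-valued (or $\nu_d$-torsion) corrections, the eigenvalues of $\operatorname{Frob}_{(H)}$ on $\phi[\mathfrak p]$ are the reductions mod $\mathfrak p$ of the roots of $X^2-aX+\varepsilon H$, with $\varepsilon\in\mathbf F_q^\times$. The Riemann hypothesis for Drinfeld modules gives $\deg a\le \tfrac12\deg H$, so $a\in\mathbf F_q$. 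One eigenvalue is $\lambda(\operatorname{Frob}_{(H)})=\zeta h^k$ with $\zeta^{\nu_d}=1$, by Lemma~\ref{lem:Ishii}. The determinant is $\kappa$ times an $\mathbf F_q^\times$-valued character, so the other eigenvalue is $\zeta' h^{1-k}$. Taking the trace yields
\[
\zeta h^k+\zeta' h^{1-k}=a\in\mathbf F_q ,
\]
where $\zeta,\zeta'$ lie in $\mathbf F_{q^2}$ (in $\mathbf F_q$ when $d$ is odd).

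\emph{Step 2: potentially good reduction at $\mathfrak p$, and $d$ odd.} Suppose first that $v_{\mathfrak p}(j(\phi))<0$, or that $d$ is even. Then Lemma~\ref{lem:power}, together with Lemma~\ref{lem:Ishii} in the even case, gives $k\equiv 0,1$ modulo $(q^d-1)/\nu_d$. Hence $h^k$ equals $h^0$ or $h^1$ times a root of unity in $\mathbf F_{q^2}$, and likewise for $h^{1-k}$. The trace relation then becomes $\eta+\eta'h=a$ with $\eta,\eta'\in\mathbf F_{q^2}^\times$. This forces $h\in\mathbf F_{q^2}$, which contradicts $[\mathbf F_q(h):\mathbf F_q]=d\ge3$. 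So $\phi$ has potentially good reduction at $\mathfrak p$, and $d$ is odd.

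\emph{Step 3: $d>q$ odd.} Now Lemma~\ref{lem:Ishii} gives $(q+1)k\equiv n$ with $0\le n\le q+1$. I would split according to whether $a=0$.
\begin{itemize}
\item If $a=0$, then $h^{2k-1}=-\zeta'/\zeta\in\mathbf F_q^\times$. This gives $2k\equiv1 \pmod{(q^d-1)/(q-1)}$, because $h$ generates $(A/\mathfrak p)^\times/\mathbf F_q^\times$.
\item If $a\neq0$, I would apply the Frobenius automorphism $x\mapsto x^{q}$ of $A/\mathfrak p$ and multiply the relation by $(q+1)$-type powers. Rewriting via $(q+1)k\equiv n$, this should produce a nontrivial polynomial relation for $h$ over $\mathbf F_q$ of degree at most $q+1$ (more precisely, of the form $h^{n}$ or $h^{q+1-n}$ times a constant in $\mathbf F_q$, as in the proof of Lemma~\ref{lem:power}(3)). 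That contradicts $d>q$ unless $n\in\{0,q+1\}$, and those values of $n$ were already excluded in Step 2.
\end{itemize}
Finally, $q$ must be odd. If $q$ is even and $a=0$, the characteristic polynomial $X^2+\varepsilon H$ is inseparable. I would show this is incompatible with $\bar\phi$ being ordinary or supersingular of the right shape: its roots coincide mod $\mathfrak p$, which forces $h^{k}=\pm h^{1-k}$ up to a constant, and hence $h^{2k-1}$ would again be forced into a situation contradicting the degree. Alternatively, the condition $2k\equiv1$ is to be read against $k\equiv n(q+1)^{-1}$.

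The main obstacle is Step 3 in the case $a\neq0$, where I must extract a low-degree algebraic relation for $h$ from a two-term trace identity. I expect this to be the hardest point, along with justifying the precise form of the eigenvalue and determinant corrections coming from the tame inertia at $(H)$.
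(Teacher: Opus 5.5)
Your Steps~1 and~2 are fine, with one precision for Step~1: good reduction is reached over a totally ramified extension \cite[Lemma~2.3]{Ish22}, so the residue field stays $\mathbf F_q$ and $a\in\mathbf F_q$. Step~2 even disposes of even $d$ directly via Lemma~\ref{lem:power}(1), where the paper quotes \cite[Theorem~4.2]{Ish22}.

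The gap is in Step~3, which is meant to replace the paper's appeal to \cite[Theorem~4.5 and Proposition~4.3]{Ish22}.
\begin{enumerate}[label=(\roman*)]
\item In the case $a=0$ you pass from $h^{2k-1}\in\mathbf F_q^\times$ to $2k\equiv1\pmod{(q^d-1)/(q-1)}$ because ``$h$ generates $(A/\mathfrak p)^\times/\mathbf F_q^\times$''. This is false: $h$ generates $A/\mathfrak p$ as a field, not the cyclic group $(A/\mathfrak p)^\times$ modulo scalars. For example, take $q=3$, $H=T$, and $P$ the minimal polynomial of a primitive $11$th root of unity (degree $5$, since $3$ has order $5$ mod $11$). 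Then the image of $h$ has order $11$, while $(q^d-1)/(q-1)=121$, so you only get $2k\equiv1\pmod{11}$.
\item In the case $a\neq0$ there is no argument. The relation you anticipate, a monomial $h^n$ or $h^{q+1-n}$ lying in $\mathbf F_q^\times$, is not what an additive two-term identity with nonzero right-hand side produces. In Lemma~\ref{lem:power}(3) that monomial relation came from a prime of bad reduction.
\item The parity of $q$ is not proved. Inseparability of $X^2+\varepsilon H$ is no contradiction: for even $q$ it occurs for $\phi_T=c+\Delta\tau^2$ over $A/(H)$. Your alternative remark presupposes the congruence $2k\equiv1$, whose derivation was defective.
\end{enumerate}

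The missing idea is to eliminate $h$ in favour of $u:=h^k$. Your trace relation reads $\zeta u^2-au+\zeta'h=0$, so $h=\zeta'^{-1}(au-\zeta u^2)\in\mathbf F_q(u)$, hence $[\mathbf F_q(u):\mathbf F_q]=d$. Since $h^{(q^d-1)/(q-1)}\in\mathbf F_q^\times$, Lemma~\ref{lem:Ishii} gives $u^{q+1}=c\,h^n$ with $c\in\mathbf F_q^\times$. Substituting for $h$ yields
\[
u^{q+1-n}=c'(a-\zeta u)^n,\qquad c'\in\mathbf F_q^\times .
\]
The values $n\in\{0,q+1\}$ mean $k\equiv0,1$, which your Step~2 computation excludes. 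For $1\le n\le q$, the polynomial $f(X)=X^{q+1-n}-c'(a-\zeta X)^n\in\mathbf F_q[X]$ has degree at most $q$ and $f(u)=0$. So $d>q$ forces $f=0$, and comparing constant terms and then exponents gives $a=0$ and $2n=q+1$. Hence $q$ is odd. From $(q+1)k\equiv(q+1)/2$ and $\gcd\bigl(q+1,(q^d-1)/(q-1)\bigr)=1$ you then get $2k\equiv1\pmod{(q^d-1)/(q-1)}$. With this supplied, your route gives a direct proof of the last assertion. It replaces the citations of Ishii's Theorem~4.5 and Proposition~4.3, while still relying, like the paper, on the trace congruence imported from \cite{Ish22}.
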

\begin{proof}
If $v_{\mathfrak p}(j(\phi))<0$, the proof of Lemma~\ref{lem:power}
gives $k\equiv0,1\pmod{(q^d-1)/(q-1)}$.
Let $(H)$ be a  degree-one prime at which $\phi$ has potentially good reduction.
The argument preceding \cite[Theorem~4.2]{Ish22} gives $A_H\in\mathbf F_q$ such that
\[
A_H\equiv a_0H^k+b_0H^{1-k} \pmod{\mathfrak{p}}
\]
with $a_0,b_0\in\mathbf F_q^\times$ for odd $d$, and
$a_0,b_0\in\mathbf F_{q^2}^\times\subset(A/\mathfrak{p})^\times$ for even $d$.
Since $H^{(q^d-1)/(q-1)}\bmod{\mathfrak{p}}\in\mathbf F_q^\times$,
this forces $H\bmod{\mathfrak{p}}\in\mathbf F_{q^2}$, contradicting $d \geq 3$.
Thus $v_{\mathfrak p}(j(\phi)) \geq 0$.

Then it follows from \cite[Theorem~4.2]{Ish22}  that $d$ is odd.
If also $d>q$, \cite[Theorem~4.5]{Ish22}  excludes even $q$.
For odd $q$, \cite[Proposition~4.3]{Ish22} then gives
$2k\equiv1\pmod{(q^d-1)/(q-1)}$.
\end{proof}

\begin{proposition}\label{prop:supersingular}
Suppose that $q \equiv d \equiv 1 \pmod{2}$ and
$2k\equiv1\pmod{(q^d-1)/(q-1)}$.
Let $\phi'$ be the $\mathbf F_q^\times$-twist of $\phi$ whose isogeny character is $\kappa^k$.
Then $\phi'$ has good supersingular reduction at every  prime $(H)$
with $1\le\deg H<d$. Its Frobenius polynomial at $(H)$ is
\[
Z^2-\operatorname{Norm}_{(A/\mathfrak{p})/\mathbf F_q}(H)^rH,
\]
where $r := (2k-1)(q-1)/(q^{d}-1)$. 
\end{proposition}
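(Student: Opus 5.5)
Here $\phi$ has an $F$-rational cyclic $\mathfrak p$-subgroup $C$ with isogeny character $\lambda=\alpha\kappa^k$ as in Lemma~\ref{lem:Ishii}. The plan is to read off the Frobenius trace and determinant at $(H)$ from the semisimplified mod-$\mathfrak p$ representation $\phi'[\mathfrak p]^{\mathrm{ss}}=\kappa^k\oplus\kappa^{1-k}$. First I would justify this decomposition. Since $\alpha$ takes values in $\mathbf F_q^\times$ when $d$ is odd (as $\alpha^{q-1}=1$), twisting $\phi$ by the quadratic-or-smaller $\mathbf F_q^\times$-twist corresponding to $\alpha^{-1}$ produces $\phi'$ with isogeny character exactly $\kappa^k$. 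The determinant of $\phi'[\mathfrak p]$ is $\kappa$ times an $\mathbf F_q^\times$-valued character $\epsilon$ coming from $\Delta$, so the quotient character is $\epsilon\kappa^{1-k}$. Because $2k\equiv 1 \pmod{(q^d-1)/(q-1)}$, we can write $\kappa^{1-k}=\kappa^{k}\cdot\kappa^{1-2k}$, where $\kappa^{1-2k}$ is $\mathbf F_q^\times$-valued. Concretely, $\kappa^{(1-2k)}=\mathrm{Norm}(\kappa)^{-r}$, since $\kappa^{(q^d-1)/(q-1)}=\operatorname{Norm}_{(A/\mathfrak p)/\mathbf F_q}\circ\kappa$.

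Second, I would establish good reduction at each $(H)$ with $\deg H<d$. By Lemma~\ref{lem:bad-reduction-character}, if $v_{(H)}(j)<0$ then $\lambda^{q-1}(\mathrm{Frob})\in\{1,\kappa^{q-1}\}$, i.e.\ $h^{(q-1)k}\in\{1,h^{q-1}\}$ with $h=H\bmod\mathfrak p$. Combined with $2k\equiv1$, this gives $h^{(q-1)}$, hence $h^{2(q-1)}$, to some power lying in a subfield too small, forcing $h$ into $\mathbf F_{q^2}$ or similar. This contradicts $\mathbf F_q(h)=A/\mathfrak p$, which holds as long as $\deg H<d$ and $d$ odd prime-to-small-degree; I would make this precise via $[\mathbf F_q(h):\mathbf F_q]$ dividing $d$ and exceeding $2$. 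The upshot is potentially good reduction. To get good reduction of the twist $\phi'$ itself, I would use that inertia at $(H)$ acts on $\phi'[\mathfrak p]$ through $\kappa^k$ and $\kappa^{1-k}\epsilon$, both unramified at $(H)\ne\mathfrak p$ after choosing the twist; I expect this twist-normalization step to be the delicate one. The aim is to show the $\mathbf F_q^\times$-twist killing the inertia is exactly the one with character $\kappa^k$.

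Third, I would compute the Frobenius polynomial. For good reduction at $(H)$ of degree $e$, the characteristic polynomial of Frobenius is $Z^2-a_HZ+\mu H$, with $a_H\in A$, $\deg a_H\le e/2$, and $\mu\in\mathbf F_q^\times$. Its reduction mod $\mathfrak p$ equals
\[
(Z-h^k)(Z-\epsilon(\mathrm{Frob})h^{1-k}).
\]
Hence $a_H\equiv h^k+\epsilon h^{1-k}=h^k(1+\epsilon\,\mathrm{Norm}(h)^{-r})$. Since $\deg a_H<d$, $a_H$ is determined by its residue. If it were nonzero, $h^k$ would lie in the subfield generated by $a_H$ and $\mathbf F_q$. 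But then $h^{2k}=h\cdot\mathrm{Norm}(h)^{r}$ would lie in it, so $h\in\mathbf F_q(a_H)$. For $\deg H<d$, $a_H\bmod\mathfrak p$ already lies in the image of polynomials of degree $<d$, while $h$ generates $A/\mathfrak p$. This is the same field-generation argument as in Lemma~\ref{lem:linear-good-reduction}, and it forces $a_H=0$, so the reduction is supersingular.

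Finally, I would compare the constant terms: $\mu H\equiv\epsilon h=h^{2k}\mathrm{Norm}(h)^{-r}\cdot\dots$, and determine $\mu=\mathrm{Norm}(H)^r$ with the sign dictated by $a_H=0$. The main obstacle is the rigidity argument that $a_H\equiv0$, i.e.\ ruling out $1+\epsilon\,\mathrm{Norm}(h)^{-r}\neq0$. I would attack it by noting that the residue $a_H$ has degree $<d$ and that $h^k$ generates the full field, whereas $a_H h^{-k}\in\mathbf F_q$.
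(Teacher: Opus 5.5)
Your outline follows the paper's: rule out $v_{(H)}(j(\phi))<0$ via Lemma~\ref{lem:bad-reduction-character}, deduce good reduction of $\phi'$ from the unramifiedness of $\kappa^k$ at $(H)$, then read off the Frobenius polynomial modulo $\mathfrak p$. However, the step you flag as the main obstacle, $a_H=0$, is not proved, and the field-generation argument you propose cannot prove it. Write $N:=\operatorname{Norm}_{(A/\mathfrak p)/\mathbf F_q}(h)$. From $\bar a_H=c_1h^k$ with $c_1=1+\epsilon N^{-r}\neq 0$ you deduce $h\in\mathbf F_q(\bar a_H)$. But trivially also $\bar a_H\in\mathbf F_q(h)$, so all you learn is $\mathbf F_q(h)=\mathbf F_q(\bar a_H)$, which contradicts nothing. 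The analogy with Lemma~\ref{lem:linear-good-reduction} fails exactly here. There $\deg H=1$ forces $A_H\in\mathbf F_q$, so field generation suffices; here $\deg a_H$ can be as large as $(d-1)/2$, and $\bar a_H$ may well generate $A/\mathfrak p$.

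The missing idea is to use the relation quantitatively and lift it to $A$. Since $h^{2k}=N^rh$, squaring the trace gives
\[
a_H^2\equiv\bigl(h^k+\epsilon h^{1-k}\bigr)^2=N^{-r}\bigl(N^r+\epsilon\bigr)^2h\pmod{\mathfrak p},
\]
so $a_H^2\equiv cH\pmod{\mathfrak p}$ with $c\in\mathbf F_q$. Because $2\deg a_H\le\deg H<d$, both sides have degree $<d$, hence $a_H^2=cH$ in $A$. As $H$ is irreducible and $\deg a_H<\deg H$, this forces $a_H=0$ and $c=0$, i.e.\ $\epsilon=-N^r$. Since $\mu=\epsilon$ by comparing constant terms, the Frobenius polynomial is $Z^2-N^rH$. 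This passage from a congruence to an identity of polynomials, using $\deg H<d$, is how the paper concludes, and it is the real content of the proposition.

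Two earlier steps are also looser than necessary.

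\emph{Excluding bad reduction.} To rule out $v_{(H)}(j(\phi))<0$ you invoke $\mathbf F_q(h)=A/\mathfrak p$, which need not hold when $d$ is composite. It is also not needed. Squaring $h^{(q-1)k}\in\{1,h^{q-1}\}$ and using $2k(q-1)\equiv q-1\pmod{q^d-1}$ gives $h^{q-1}\in\{1,h^{2(q-1)}\}$, hence $h\in\mathbf F_q^\times$. This is impossible, because $H-c$ is a nonzero polynomial of degree $<d$ for every $c\in\mathbf F_q$.

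\emph{Good reduction of $\phi'$.} You assert without justification that $\epsilon\kappa^{1-k}$ is unramified at $(H)$. What makes this work is the following: under potentially good reduction at $(H)\ne\mathfrak p$, inertia acts on $\phi'[\mathfrak p]$ through automorphisms of the reduction. Stability of the line on which $G_F$ acts by $\kappa^k$ forces these to be $\mathbf F_q^\times$-scalars (this uses that $d$ is odd). So unramifiedness of $\kappa^k$ alone kills inertia. The paper packages this as \cite[Lemma~4.1]{Ish22}.
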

\begin{proof}
We first note that the twist $\phi'$ exists because $\alpha^{q-1}=1$. 
Let $H \in A$ be a monic irreducible polynomial with $1\le\deg H<d$.
Put $h := H \bmod{\mathfrak{p}}$.
If $v_{(H)}(j(\phi))<0$, Lemma~\ref{lem:bad-reduction-character} gives $h^{(q-1)k}\in\{1,h^{q-1}\}$.
Using $2k(q-1)\equiv q-1\pmod{q^d-1}$ yields $h^{q-1} = 1$, that is,  $h\in\mathbf F_q^\times$.
This contradicts the fact that $1 \leq \deg H<d$.
Hence $v_{(H)}(j(\phi)) \geq 0$.

Since the isogeny character $\kappa^k$ of $\phi'$ is unramified at $(H)$, it follows from
\cite[Lemma~4.1]{Ish22} that  $\phi'$ has good reduction at $(H)$. 
By \cite[Theorem 1.8]{Gek08}, one can write the Frobenius polynomial as $Z^2-A_HZ+\delta_HH$, where
$\delta_H\in\mathbf F_q^\times$ and $2\deg A_H\le\deg H$.  
Then $z = h^k$ is a root of $Z^2-A_HZ+\delta_HH \equiv 0 \pmod{\mathfrak{p}}$. 
Moreover, it satisfies
\[
z^2=c_H h,\qquad
c_H := \operatorname{Norm}_{(A/\mathfrak{p})/\mathbf F_q}(h^r)\in\mathbf F_q^\times
\]
by the definition of $r$.
Using the other root $\delta_Hh/z$ gives
\[
A_H^2\equiv(c_H+2\delta_H+\delta_H^2/c_H)H \pmod{\mathfrak{p}}.
\]
Both sides have degree less than $d$, so this is an equality in $A$.
Since $H$ is an irreducible polynomial, one has $A_H=0$ and $\delta_H=-c_H$.
It follows that the twist $\phi'$ has good supersingular reduction at $(H)$.  
\end{proof}

\section{The case of even-degree prime levels}\label{sec:counting}
We retain the notation for $\phi$, $C$, $\lambda$, $\kappa$, and $k$
from Section~\ref{sec:characters}. 

\begin{definition}
Let $\nu$ be a positive integer with $q-1 \mid \nu$ and $\lambda^{\nu}\in\{1,\kappa^{\nu}\}$. 
We say that a polynomial $H \in A$ satisfies the condition (C$_{\mathfrak{p},  \nu}$) if $H$ is monic and there exist $v\in(A/\mathfrak{p})^\times$, $\delta_H\in\mathbf F_q^\times$, and  $A_H \in A$ such that 
\[
H\equiv vA_H-\delta_Hv^2\pmod{\mathfrak{p}},
\qquad v^{\nu}=1,\qquad  \deg A_H\le\lfloor \deg H/2\rfloor.
\]
\end{definition}

\begin{lemma}\label{lem:potent-good-lem}
Let $\nu$ be a positive integer with $q-1 \mid \nu$ and $\lambda^{\nu}\in\{1,\kappa^{\nu}\}$. 
Let $H \in A$ be a monic irreducible polynomial with  $\deg(H) < d$. 
If $\phi$ has potentially good reduction at $(H)$,  then $H$ satisfies the condition (C$_{\mathfrak{p},  \nu}$). 
\end{lemma}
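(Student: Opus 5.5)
The plan is to reduce to the case of good reduction after a twist, and then read off the condition from the Frobenius polynomial at $(H)$ together with the congruence relating the isogeny character to the Frobenius eigenvalues. Since $q-1\mid\nu$, replacing $\phi$ by an $\mathbf F_q^\times$-twist changes $\lambda$ only by an $\mathbf F_q^\times$-valued character. This leaves $\lambda^\nu$ unchanged, so the hypothesis $\lambda^\nu\in\{1,\kappa^\nu\}$ is preserved. Potentially good reduction is also invariant under twist. Because $\phi$ has potentially good reduction at $(H)$, a suitable twist (the relevant extension has degree dividing $q^2-1$, and up to $\mathbf F_q^\times$-twist we may arrange it, as in \cite[Lemma~4.1]{Ish22}) will have good reduction at $(H)$. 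A cleaner alternative is to work directly with the reduction over the finite extension and keep track of the $\mathbf F_q^\times$-twist. I would follow the argument preceding \cite[Theorem~4.2]{Ish22}: after twisting, $\phi$ has good reduction at $(H)$, and $H\ne P$ since $\deg H<d$.

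Next, by \cite[Theorem~1.8]{Gek08} the Frobenius polynomial of the reduction at $(H)$ has the form $Z^2-A_HZ+\delta_HH$, with $\delta_H\in\mathbf F_q^\times$, $A_H\in A$ and $\deg A_H\le\lfloor\deg H/2\rfloor$. This polynomial is also the characteristic polynomial of $\operatorname{Frob}_{(H)}$ acting on $\phi[\mathfrak p]$. Since $C$ is Galois-stable and $G_{F_{(H)}}$ acts on it unramifiedly, $v:=\lambda(\operatorname{Frob}_{(H)})\in(A/\mathfrak p)^\times$ is a root modulo $\mathfrak p$:
\[
v^2-A_Hv+\delta_HH\equiv0\pmod{\mathfrak p}.
\]
The other root is $w=\delta_Hh/v$, where $h=\kappa(\operatorname{Frob}_{(H)})=H\bmod\mathfrak p$, because the determinant of the Galois action on $\phi[\mathfrak p]$ is $\kappa$ up to an $\mathbf F_q^\times$-valued character.

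Now split into the two cases. If $\lambda^\nu=1$, then $v^\nu=1$, and rearranging the displayed congruence gives $\delta_H H\equiv vA_H-v^2$. Replacing $\delta_H$ by its inverse and rescaling turns this into the form $H\equiv vA_H'-\delta'v^2$ with $A_H'=\delta_H^{-1}A_H$ of the same degree. More carefully, $H\equiv \delta_H^{-1}vA_H-\delta_H^{-1}v^2$, so I take $A_H:=\delta_H^{-1}A_H$ and $\delta:=\delta_H^{-1}$. If instead $\lambda^\nu=\kappa^\nu$, I use the other eigenvalue. The character on the quotient $\phi[\mathfrak p]/C$ is $\kappa\lambda^{-1}$ times an $\mathbf F_q^\times$-valued character, so $w^\nu=(h/v)^\nu=1$, and $w$ satisfies the same quadratic congruence. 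Applying the same rearrangement to $w$ gives the condition (C$_{\mathfrak p,\nu}$).

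I expect the main obstacle to be the first step: justifying that one may pass to a twist with genuine good reduction at $(H)$ while preserving $\lambda^\nu$, and identifying the eigenvalues modulo $\mathfrak p$ precisely, in particular that the determinant is $\kappa$ up to $\mathbf F_q^\times$. For this I would rely on \cite[Lemma~4.1]{Ish22} together with the standard fact that the reduction's characteristic polynomial computes the action on $\mathfrak p$-torsion for $H\notin\mathfrak p$. The remaining steps are algebraic rearrangement.
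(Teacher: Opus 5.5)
Your argument breaks at the first step. Potentially good reduction at $(H)$ does not imply that some $\mathbf F_q^\times$-twist of $\phi$ has good reduction at $(H)$.

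Every $\mathbf F_q^\times$-twist of $\phi_T=T+g\tau+\Delta\tau^2$, and every $F_{(H)}$-model of such a twist, has the form $T+bg\tau+b^{q+1}\Delta\tau^2$ with $b\in F_{(H)}^\times$. So $v_{(H)}(\Delta)$ modulo $q+1$ cannot be changed. Potentially good reduction, however, only requires $(q+1)v_{(H)}(g)\ge v_{(H)}(\Delta)$. For example, $g=\Delta=H$ has potentially good reduction, yet no twist of it has a unit leading coefficient at $(H)$.

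In Galois terms, when $(q+1)\nmid v_{(H)}(\Delta)$, inertia at $(H)$ acts on $\phi[\mathfrak p]$ through automorphisms of the $j=0$ reduction that do not all lie in $\mathbf F_q^\times$. For $d$ even, nothing in the hypotheses prevents $\lambda|_{I_{(H)}}$ from taking values in $\mathbf F_{q^2}^\times\setminus\mathbf F_q^\times$; this already occurs locally when $\deg H$ is also even. No $\mathbf F_q^\times$-valued twist can then make $\lambda$ unramified at $(H)$, so \cite[Lemma~4.1]{Ish22} cannot be invoked.

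The hypothesis $\lambda^\nu\in\{1,\kappa^\nu\}$ does not rule this out. It only says that $\lambda^\nu$ is unramified at $(H)$, which is automatic for $\nu=q^2-1$. For $\nu=q-1$ it does force $\lambda(I_{(H)})\subseteq\mathbf F_q^\times$, and then your twist exists. But $\nu=q^2-1$ with $d$ even is exactly the case used in Theorem~\ref{thm:even}.

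The missing idea is that no twist is needed, and this is the paper's proof:
\begin{enumerate}[label=(\roman*)]
\item Work over the totally ramified extension $L/F_{(H)}$ over which $\phi$ acquires good reduction \cite[Lemma~2.3]{Ish22}. Its residue field is still $A/(H)$, so \cite[Theorem~1.8]{Gek08} gives the Frobenius polynomial $Z^2-A_HZ+\delta_HH$ of the reduction.
\item A Frobenius element $\sigma\in G_L$ is also a Frobenius lift for $F_{(H)}$. Since $\phi$ is isomorphic over $L$ to the good model, $\sigma$ acts on $\phi[\mathfrak p]$ with this characteristic polynomial modulo $\mathfrak p$. Hence $z:=\lambda(\sigma)$ is a root.
\item The value $z$ depends on the choice of $\sigma$ when $\lambda$ is ramified at $(H)$. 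However, $\kappa(\sigma)=h$ because $\kappa$ is unramified at $(H)$, so the hypothesis still gives $z^\nu\in\{1,h^\nu\}$.
\end{enumerate}
From there, your root swap and your rescaling $A_H\mapsto\delta_H^{-1}A_H$, $\delta\mapsto\delta_H^{-1}$ are correct. They differ only cosmetically from the paper's choice $v:=z/\delta_H$.
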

\begin{proof}
Since $\phi$ has good reduction over a totally ramified extension of $F_{(H)}$ (see \cite[Lemma~2.3]{Ish22}),  
the Frobenius polynomial of the Drinfeld module obtained by reducing a model of $\phi$ with good reduction over
this totally ramified extension is $Z^2-A_HZ+\delta_HH$ with 
$\delta_H\in\mathbf F_q^\times$ and  $A_H \in A$ with $\deg A_H\le\lfloor \deg H/2\rfloor$ by \cite[Theorem 1.8]{Gek08}. 
Since $\lambda^{\nu}\in\{1,\kappa^{\nu}\}$, one root $z\in(A/\mathfrak p)^\times$ of $Z^2-A_HZ+\delta_HH \equiv 0 \pmod{\mathfrak{p}}$ satisfies $z^\nu \in \{1, h^{\nu}\}$, where $h := H \bmod{\mathfrak{p}}$. 
If $z^\nu = h^\nu$, then the other root $\delta_Hh/z$ satisfies $(\delta_Hh/z)^\nu = 1$. 
Hence we may assume that $z^\nu = 1$. 
We then put $v := z/\delta_H$. It follows that $v^{\nu}=1$ and $H\equiv vA_H-\delta_Hv^2\pmod{\mathfrak{p}}$. 
\end{proof}

Note that it follows from  Lemma \ref{lem:power}(1) that one can take $\nu = q^2-1$ if $d$ is even. 
For any even integer $d$,  we define
\[
\mathcal{S}_{\mathfrak{p}} := \{H\in A \mid \text{$\deg H=d-1$, $H$ is irreducible and satisfies (C$_{\mathfrak{p}, q^2-1}$)}\}
\]

\begin{lemma}\label{lem:count}
For any even integer $d\ge4$, one has  $\#\mathcal{S}_{\mathfrak{p}}\le(q^2-q)q^{(d-2)/2}$. 
\end{lemma}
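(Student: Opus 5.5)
Here $d$ is even and $\deg H=d-1$ is odd, so $\deg A_H\le\lfloor(d-1)/2\rfloor=(d-2)/2$. The plan is to count pairs $(v,\delta_H,A_H)$ and bound the number of $H$ each can produce. Note $v^{q^2-1}=1$ means $v\in\mathbf F_{q^2}^\times\subset(A/\mathfrak p)^\times$; this inclusion holds because $d$ is even. For each $H\in\mathcal S_{\mathfrak p}$ we have a congruence $H\equiv vA_H-\delta_Hv^2\pmod{\mathfrak p}$. Since $\deg H=d-1<d$, the polynomial $H$ is the unique representative of degree $<d$ of its class. So $H$ is determined by the triple $(v,\delta_H,A_H)$, and it suffices to bound the number of triples that can yield a \emph{monic} polynomial of degree exactly $d-1$.

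The key step is to identify $vA_H-\delta_Hv^2$ with an honest polynomial of small degree. Fix the embedding $\mathbf F_{q^2}\hookrightarrow A/\mathfrak p$, and choose $\theta\in A$ of degree $<d$ representing a generator, so that $\mathbf F_{q^2}=\mathbf F_q\oplus\mathbf F_q\bar\theta$. Write $v=a+b\bar\theta$ with $a,b\in\mathbf F_q$, and likewise $-\delta_Hv^2=a'+b'\bar\theta$. Then the reduced representative of $vA_H-\delta_Hv^2$ is the degree-$<d$ reduction of $aA_H+b\,\theta A_H+a'+b'\theta$. If $b=0$, this representative is $aA_H+a'+b'\theta$ reduced. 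Its degree is at most $\max((d-2)/2,\deg\theta)$, which has no reason to be $d-1$ unless $\theta$ contributes. So I would split into cases.

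\textbf{Case} $v\in\mathbf F_q^\times$. Then $v^2\in\mathbf F_q$ as well, so $vA_H-\delta_Hv^2\in A$ has degree $\le(d-2)/2<d-1$. Hence such triples never give an element of $\mathcal S_{\mathfrak p}$.

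\textbf{Case} $v\notin\mathbf F_q$. There are $q^2-q$ choices of $v$. For fixed $v$ and $\delta_H$, the map $A_H\mapsto\overline{vA_H-\delta_Hv^2}$ is injective and affine-linear from the $\mathbf F_q$-space of polynomials of degree $\le(d-2)/2$ (dimension $d/2$) into the polynomials of degree $<d$. The condition that the image be monic of degree exactly $d-1$ is an affine condition on the top coefficient. I expect this to cut the count from $q^{d/2}$ to at most $q^{(d-2)/2}$ per $(v,\delta_H)$, or else to force the image to be empty. The remaining freedom in $\delta_H$ ($q-1$ choices) must be absorbed: replacing $(v,\delta_H,A_H)$ by $(cv,c^{-2}\delta_H,c^{-1}A_H)$ for $c\in\mathbf F_q^\times$ gives $c\cdot(vA_H-\delta_Hv^2)\cdot c^{-1}$-type rescalings. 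The substitution $(v,\delta,A)\mapsto(cv,\delta c^{-2},A c^{-1})$ leaves $vA-\delta v^2$ unchanged. This rescaling lets me normalize $\delta_H$ up to squares, and combining it with monicity, which fixes the scalar, should absorb the factor $q-1$.

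The main obstacle is making this last bookkeeping precise. One must show that the top-coefficient map $A_H\mapsto$ (coefficient of $T^{d-1}$ in the reduction) is a nonzero linear functional. One must also show that the combined normalization (monic, plus the $\mathbf F_q^\times$-scaling orbit of $(v,\delta_H)$) leaves at most $(q^2-q)q^{(d-2)/2}$ triples in total. If the orbit argument fails, I would instead parametrize by the line $\mathbf F_q^\times v$ together with $\delta_H$. That gives $(q+1-1)\cdot(q-1)$ classes, times the $q^{(d-2)/2}$ solutions of the monic affine condition, and produces the bound $(q^2-q)q^{(d-2)/2}$ directly.
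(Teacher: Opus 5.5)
Your skeleton matches the paper's. You discard $v\in\mathbf F_q^\times$ and bound the admissible $A_H$ for each pair $(v,\delta_H)$ through the coefficient of $T^{d-1}$. You then use the free $\mathbf F_q^\times$-action $(v,\delta_H,A_H)\mapsto(cv,c^{-2}\delta_H,c^{-1}A_H)$, which leaves $vA_H-\delta_Hv^2$ unchanged, to pass from $(q^2-q)(q-1)$ pairs to $q^2-q$ classes. Your fallback ($q$ lines times $q-1$ values of $\delta_H$) is exactly this count. Your first attempt, ``monicity fixes the scalar'', is not an argument, since the rescaling does not change the polynomial.

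The gap is the step everything rests on, which you only announce. For fixed $v\notin\mathbf F_q$ and $\delta_H$, the condition is $\ell_v(A_H)+c=1$. Here $\ell_v(A_H)=[T^{d-1}]\remn_{\mathfrak p}(vA_H)$ is linear and $c=[T^{d-1}]\remn_{\mathfrak p}(-\delta_Hv^2)$ is constant. If $\ell_v$ vanished while $c=1$, all $q^{d/2}$ polynomials $A_H$ would qualify, and the bound would be off by a factor of $q$. Nothing you wrote excludes this; ``show $\ell_v$ is a nonzero functional'' is left as an open obstacle.

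The missing idea is that $c$ is controlled by $\ell_v$. Since $v\in\mathbf F_{q^2}$, one has $v^2=\operatorname{Tr}(v)v-\operatorname{Norm}(v)$, and the constant $\delta_H\operatorname{Norm}(v)$ does not reach $T^{d-1}$. Hence $c=-\delta_H\operatorname{Tr}(v)\,\ell_v(1)$. In your own notation this is simply $c=b'[T^{d-1}]\theta=(b'/b)\,\ell_v(1)$, which you had within reach. So either $\ell_v\neq0$ and there are exactly $q^{(d-2)/2}$ solutions, or $\ell_v=0$ and the condition reads $0=1$. This is how the paper argues, and it never needs $\ell_v\neq0$.

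Your stronger claim is in fact true. Let $V$ be the polynomials of degree $\le(d-2)/2$. Then $V\cap\theta V=0$ in $A/\mathfrak p$: a relation $A_1\equiv\theta A_2$ would force $A_1^2-tA_1A_2+nA_2^2=0$ in $A$ by degree reasons, where $X^2-tX+n$ is the minimal polynomial of $\theta$. This is impossible because $\mathbf F_q$ is algebraically closed in $F$. So $V\oplus\theta V=A/\mathfrak p$, which cannot lie in degrees $\le d-2$. Either way, some argument of this kind has to be supplied before the count goes through.
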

\begin{proof}
    For $f\in A$, write $[T^i](f)$ for the coefficient of $T^i$ in $f$. 
We denote by $\remn_{\mathfrak{p}} \colon A/\mathfrak{p} \hookrightarrow A$ the $\mathbf{F}_q$-linear map sending each residue class to its unique  representative of degree less than $d$. 
Put  $h :=\lfloor (d-1)/2\rfloor = (d-2)/2$. 
Take an element $v \in \mathbf{F}_{q^2}^\times \subset (A/\mathfrak{p})^\times$. 
Let 
\[
\ell_v \colon \{A_H\in A \mid \deg A_H\le h\} \to \mathbf{F}_q; \qquad A_H \mapsto  [T^{d-1}]\remn_{\mathfrak{p}}(vA_H)
\]
be the $\mathbf{F}_q$-linear map. 
Since $v\in\mathbf F_{q^2}$, we have $v^2=\operatorname{Tr}_{\mathbf F_{q^2}/\mathbf F_q}(v)v-\operatorname{Norm}_{\mathbf F_{q^2}/\mathbf F_q}(v)$. 
Thus, for $H\in\mathcal{S}_{\mathfrak{p}}$, the congruence
$H\equiv vA_H-\delta_Hv^2\pmod{\mathfrak{p}}$ implies
\[
1=\ell_v(A_H)
-\delta_H\operatorname{Tr}_{\mathbf F_{q^2}/\mathbf F_q}(v)\ell_v(1).
\]
If $\ell_v\ne0$, this equation has $q^h$ solutions for $A_H$.
If $\ell_v=0$, it reads $1=0$ and has no solution.
Consequently, for each pair $(v,\delta_H)$ there are at most $q^h$
possible polynomials $A_H$.

For $u\in\mathbf F_q^\times$, the substitutions $(v,\delta_H,A_H)\mapsto(uv,u^{-2}\delta_H,u^{-1}A_H)$ satisfy
\[
\remn_{\mathfrak{p}}
\bigl((uv)(u^{-1}A_H)-(u^{-2}\delta_H)(uv)^2\bigr)
=
\remn_{\mathfrak{p}}(vA_H-\delta_Hv^2)
\]
and act freely on the pairs $(v,\delta_H)$.
Since $\ell_v=0$  if $v \in \mathbf{F}_q^{\times}$, one has 
\[
\#\mathcal{S}_{\mathfrak{p}} \leq  \frac{(q^2-1-(q-1))(q-1)}{q-1} q^{h} = (q^2-q)q^{h}. 
\]
\end{proof}

Let $\pi_q(e)$ denote the number of monic irreducible polynomials of
degree $e$ over $\mathbf F_q$.
Since $e\pi_q(e)=\sum_{d\mid e}\mu(d)q^{e/d}$, where $\mu$ is the M\"obius function, we obtain the inequality 
\begin{align}\label{pi-lower}
e\pi_q(e)
\ge q^e-\sum_{d\mid e, \, d\ge2}q^{e/d}
\ge q^e-\sum_{j=1}^{\lfloor e/2\rfloor}q^j = q^e- \frac{q^{\lfloor e/2\rfloor+1}-q}{q-1}.     
\end{align}

\begin{theorem}\label{thm:even}
Let $\mathfrak{p}$ be a prime ideal of $A$. 
If $d = \deg(\mathfrak{p})$ is even and $d \geq 4$, then $Y_0(\mathfrak{p})(F) = \emptyset$. 
\end{theorem}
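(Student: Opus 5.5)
Suppose $x_0\in Y_0(\mathfrak p)(F)$ exists, represented by $(\phi,C)$ as in Lemma~\ref{lem:descent}. The plan is a counting argument over the degree-$(d-1)$ primes. Each such prime either contributes to the bad-reduction sum bounded in Theorem~\ref{thm:bad-reduction-bound}, or lies in $\mathcal S_{\mathfrak p}$. We then show there are too many such primes for both sets to absorb them.

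First, since $d$ is even, Lemma~\ref{lem:power}(1) gives $\lambda^{q^2-1}\in\{1,\kappa^{q^2-1}\}$, so $\nu=q^2-1$ is admissible. Let $H$ be a monic irreducible polynomial of degree $d-1$. There are two cases.

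\begin{itemize}
\item If $\phi$ has potentially good reduction at $(H)$, then Lemma~\ref{lem:potent-good-lem} shows that $H$ satisfies (C$_{\mathfrak p,q^2-1}$), so $H\in\mathcal S_{\mathfrak p}$.
\item Otherwise $v_{(H)}(j(x_0))<0$. Since $(H)\neq\mathfrak p$ and $\deg H=d-1$, Theorem~\ref{thm:bad-reduction-bound} bounds the number of such $H$ by $g_{\mathfrak p}/(d-1)$.
\end{itemize}

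Combining the two cases with Lemma~\ref{lem:count} gives
\[
\pi_q(d-1)\le (q^2-q)q^{(d-2)/2}+\frac{q^d-q^2}{(q^2-1)(d-1)}.
\]

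Next, bound $(d-1)\pi_q(d-1)$ from below by \eqref{pi-lower} with $e=d-1$, where $\lfloor e/2\rfloor=(d-2)/2$. This gives
\[
(d-1)\pi_q(d-1)\ \ge\ q^{d-1}-\frac{q^{d/2}-q}{q-1}.
\]
Multiplying the upper bound by $d-1$, a contradiction follows once
\[
q^{d-1}-\frac{q^{d/2}-q}{q-1}\ >\ (d-1)(q^2-q)q^{(d-2)/2}+\frac{q^d-q^2}{q^2-1}.
\]

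The term $(q^d-q^2)/(q^2-1)$ is about $q^{d-2}$, which is small relative to $q^{d-1}$ precisely because $q\ge3$. The term $(d-1)q^{d/2+1}$ is dominated by $q^{d-1}$ once $d$ is moderately large. So I would verify the inequality directly for all $q\ge3$ and $d\ge d_0$, for a small $d_0$ such as $8$ or $10$, by comparing exponents.

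The main obstacle is the remaining small cases, $d\in\{6,8,\dots\}$ with small $q$, and possibly $d=4$. For $d=4$ the crude count fails: $q^3$ against $3(q^2-q)q+q^2$. This matches the remark that $d=4$ is covered by Theorem~\ref{thm:ishii} rather than by this argument, so I would invoke Theorem~\ref{thm:ishii} for $d=4$.

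For the finitely many leftover pairs $(q,d)$ where the asymptotic inequality fails, I would not rely on the bound of Lemma~\ref{lem:count}. Instead I would compute $\#\mathcal S_{\mathfrak p}$ exactly for every prime $\mathfrak p$ of that degree, by enumerating $v\in\mathbf F_{q^2}^\times$, $\delta_H\in\mathbf F_q^\times$ and $A_H$ of degree at most $(d-2)/2$, and testing irreducibility of the resulting degree-$(d-1)$ remainders. This is the computer-assisted check mentioned in the acknowledgements. If needed, I would also include primes of degree $d-2$ or lower in the count, using the same Lemma~\ref{lem:potent-good-lem} together with the weights $\deg v$ in Theorem~\ref{thm:bad-reduction-bound}, to strengthen the contradiction.
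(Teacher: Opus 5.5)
Your proposal is correct and matches the paper's proof essentially step for step. Lemma~\ref{lem:power}(1) and Lemma~\ref{lem:potent-good-lem} force every degree-$(d-1)$ prime outside $\mathcal S_{\mathfrak p}$ to satisfy $v_{(H)}(j)<0$; Theorem~\ref{thm:bad-reduction-bound}, Lemma~\ref{lem:count} and \eqref{pi-lower} then give the contradiction for $d\ge 8$ with every $q\ge 3$, and for $d=6$ with $q\ge 7$; $d=4$ is handled by Ishii's theorem, and the remaining pairs $(q,d)\in\{(3,6),(4,6),(5,6)\}$ are settled by exactly the computer enumeration of $\#\mathcal S_{\mathfrak p}$ you describe, so $d_0=8$ suffices and primes of lower degree are never needed.
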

\begin{proof} 
When $d=4$, Ishii proved in  \cite[Corollary~5.3]{Ish22} that    $Y_0(\mathfrak{p})(F) = \emptyset$. 
Hence we may assume that  $d\ge6$. 
We also note that $q \geq 3$. 

Suppose that $Y_0(\mathfrak p)(F)\ne\emptyset$, and choose a pair
$(\phi,C)$ over $F$ by Lemma~\ref{lem:descent}(1).
By Lemmas~\ref{lem:power}(1) and \ref{lem:potent-good-lem},
every irreducible polynomial $H$ of degree $d-1$ outside $\mathcal{S}_{\mathfrak{p}}$ satisfies $v_{(H)}(j(\phi))<0$. 
Since  $P \not\in \mathcal{S}_{\mathfrak{p}}$, it follows from Theorem~\ref{thm:bad-reduction-bound} that if 
\begin{equation}\label{eq:even-necessary}
(d-1)\bigl(\pi_q(d-1)-\#\mathcal{S}_{\mathfrak{p}}\bigr)> g_{\mathfrak p} = \frac{q^{d}-q^2}{q^2-1},
\end{equation}
then $Y_0(\mathfrak{p})(F) = \emptyset$. 

Let us prove the inequality \eqref{eq:even-necessary}. 
Write $d = 2r$, and first suppose that $d \geq 8$.   
 Lemma~\ref{lem:count} and the inequality \eqref{pi-lower} give
\[
(d-1)(\pi_q(d-1)-\#\mathcal{S}_{\mathfrak{p}}) \ge q^{2r-1}-\frac{q^r-q}{q-1}-(2r-1)(q-1)q^r.
\]
Since $q^{2r-1} - g_{\mathfrak{p}} > \left(1- q/(q^2-1)\right)q^{2r-1} \ge 5q^{2r-1}/8$, one has 
\[
(d-1)\bigl(\pi_q(d-1)-\#\mathcal{S}_{\mathfrak{p}}\bigr)-g_{\mathfrak p}
>q^r\left(\frac58q^{r-1}-(2r-1)(q-1)-\frac1{q-1}\right). 
\]
The expression in parentheses is positive by induction on $r$. 
Indeed, when $r = 4$, it follows that
\[
\frac{5}{8}q^{3} - 7(q-1) - \frac{1}{q-1} > 0
\]
for any $q \geq 3$. 
Hence the induction hypothesis shows that 
\begin{align*}
\frac{5}{8}q^{r} - (2r+1)(q-1) - \frac1{q-1} 
&> q\left((2r-1)(q-1) + \frac1{q-1}\right) - (2r+1)(q-1) - \frac1{q-1}
\\
&= (q(2r-1) - (2r+1))(q-1) + 1 > 0. 
\end{align*}
For $d=6$ and $q\ge7$, one has $g_{\mathfrak p}=q^4+q^2$ and
\[
5(\pi_q(5)-\#\mathcal{S}_{\mathfrak{p}})-g_{\mathfrak p}
\ge q^5-q-5(q^2-q)q^2-q^4-q^2 > 3q^3>0.
\]
The remaining cases are $(q,d) \in \{(3,6), (4,6), (5,6)\}$. 
For these three pairs, a computer-assisted calculation gives the following table:
\[
\begin{array}{c|r|r|r|r}
q & \pi_q(5) & \max_{\mathfrak{p}}\#\mathcal{S}_{\mathfrak{p}} & g_{\mathfrak p}
& 5(\pi_q(5)-\max_{\mathfrak{p}}\#\mathcal{S}_{\mathfrak{p}})-g_{\mathfrak p}\\
\hline
3 & 48  & 18  & 90  & 60\\
4 & 204 & 54  & 272 & 478\\
5 & 624 & 136 & 650 & 1790
\end{array}
\]
Here, the maximum is taken over all prime ideals $\mathfrak p$ of degree $6$. 
In particular, inequality~\eqref{eq:even-necessary} holds for each of these three pairs. 
\end{proof}

\section{The case of odd-degree prime levels}\label{sec:proofs}

Let $\phi$ be a rank-two Drinfeld module over $F$  with an $F$-rational cyclic $\mathfrak{p}$-subgroup $C$.

\begin{proposition}\label{prop:exclude-powers}
Suppose that $d$ is odd.
The isogeny character $\lambda$ of $\phi$ satisfies 
$\lambda^{q-1}\not\in\{1,\kappa^{q-1}\}$.
\end{proposition}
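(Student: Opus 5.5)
We argue by contradiction. Suppose $d$ is odd and $\lambda^{q-1}\in\{1,\kappa^{q-1}\}$. Then we may take $\nu=q-1$ in the definition of (C$_{\mathfrak p,\nu}$). Lemma~\ref{lem:potent-good-lem} then says the following: every monic irreducible $H$ with $\deg H<d$ at which $\phi$ has potentially good reduction must satisfy
\[
H\equiv vA_H-\delta_Hv^2\pmod{\mathfrak p},\qquad v\in\mathbf F_q^\times,\quad \delta_H\in\mathbf F_q^\times,\quad \deg A_H\le\lfloor\deg H/2\rfloor.
\]
Since $v,\delta_H\in\mathbf F_q$, the right-hand side is a polynomial of degree at most $\lfloor\deg H/2\rfloor<d$. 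Comparing remainders, the congruence becomes an equality in $A$: $H=vA_H-\delta_Hv^2$. This is impossible when $\deg H\ge1$, because $\deg H>\lfloor \deg H/2\rfloor$.

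Consequently $\phi$ fails to have potentially good reduction at every finite prime $(H)$ with $1\le\deg H<d$. Equivalently, $v_{(H)}(j(\phi))<0$ for all such primes, which are automatically different from $\mathfrak p$. Applying Theorem~\ref{thm:bad-reduction-bound} to the point $x_0=(\phi,C)$ gives
\[
\sum_{e=1}^{d-1}e\,\pi_q(e)\le g_{\mathfrak p}=\frac{q^d-q}{q^2-1}.
\]
The left-hand side is at least $(d-1)\pi_q(d-1)$, and by \eqref{pi-lower} this is at least $q^{d-1}-(q^{\lfloor (d-1)/2\rfloor+1}-q)/(q-1)$. Alternatively, use the exact identity $\sum_{e\mid n}e\pi_q(e)=q^n$, summing over just a couple of values of $e$. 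We then compare with $g_{\mathfrak p}<q^{d-2}\cdot q^2/(q^2-1)$. For $q\ge3$ and $d\ge3$ this ratio is roughly $q^{d-2}(1+1/8)$, well below $q^{d-1}-O(q^{(d+1)/2})$. Even the degree-one primes alone contribute $q$, and together with the degree-two primes they give $q+(q^2-q)=q^2$; this suffices for $d=3$, where $g_{\mathfrak p}=q/(q+1)\cdot\ldots<q$. We get a contradiction.

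The main obstacle is checking the small cases of this inequality. For $d=3$ we have $g_{\mathfrak p}=(q^3-q)/(q^2-1)=q$. The degree-one and degree-two primes contribute $\sum_{e\le2}e\pi_q(e)=q+(q^2-q)=q^2>q$. For $d\ge5$, the degree-$(d-1)$ term alone satisfies $(d-1)\pi_q(d-1)\ge q^{d-1}-q^{(d+1)/2}/(q-1)$. This exceeds $(q^d-q)/(q^2-1)\le \tfrac98 q^{d-2}$ once $q\ge3$, by a routine estimate. We also need to justify that the $\mathbf F_q^\times$-reduction of the congruence is legitimate, i.e.\ that $\nu=q-1$ is an admissible choice in Lemma~\ref{lem:potent-good-lem}. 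This holds because $q-1\mid\nu$ and $\lambda^{q-1}\in\{1,\kappa^{q-1}\}$ is precisely the hypothesis being refuted.
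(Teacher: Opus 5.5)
Your proof is correct and is essentially the paper's argument: taking $\nu=q-1$ in Lemma~\ref{lem:potent-good-lem} turns the congruence into an impossible equality $H=vA_H-\delta_Hv^2$ in $A$, so $\phi$ has no potentially good reduction at any prime of degree $d-1$. Then $(d-1)\pi_q(d-1)>g_{\mathfrak p}$ via \eqref{pi-lower} contradicts Theorem~\ref{thm:bad-reduction-bound}; your summing over all degrees $<d$ is harmless, and the only slip is the remark that $g_{\mathfrak p}<q$ for $d=3$, since in fact $g_{\mathfrak p}=q$ exactly, as you note later.
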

\begin{proof}
Suppose that $\lambda^{q-1}\in\{1,\kappa^{q-1}\}$.
Then it follows from Lemma~\ref{lem:potent-good-lem} that, for any prime $(H)$  with $1 \leq \deg H<d$ at which $\phi$ has potentially good reduction,
there exist $v\in(A/\mathfrak{p})^\times$, $\delta_H\in\mathbf F_q^\times$, and  $A_H \in A$ such that
\[
H\equiv vA_H-\delta_Hv^2\pmod{\mathfrak{p}},
\qquad v^{q-1}=1,\qquad  \deg A_H\le\lfloor \deg H/2\rfloor.
\]
Since $v \in \mathbf{F}_q$, one has $\deg(vA_H-\delta_Hv^2) < d$.
Hence $H = vA_H-\delta_Hv^2$. This contradicts the fact that $\deg A_H\le\lfloor \deg H/2\rfloor$.
Thus $\phi$ does not have potentially good reduction at every prime of degree $d-1$.
Write $d := 2r+1$. By the inequality \eqref{pi-lower}, one has
\[
(d-1)\pi_q(d-1)
\ge q^{2r}-\frac{q^{r+1}-q}{q-1} \geq \frac{q^{2r}}{2}
>\frac{q^{2r+1}-q}{q^2-1}=g_{\mathfrak p}
\]
since $q \geq 3$ and $r \geq 1$.
This contradicts Theorem~\ref{thm:bad-reduction-bound}.
\end{proof}

\begin{proposition}\label{prop:quadratic-character}
Let $k \in \mathbf{Z}$ be the exponent in Lemma~\ref{lem:Ishii} associated
with the $F$-rational cyclic $\mathfrak p$-subgroup $C$.
Suppose that  $q \equiv d \equiv 1 \pmod{2}$ and 
$2k\equiv1\pmod{(q^d-1)/(q-1)}$.
Then  one has $(q,d)=(3,3)$.
\end{proposition}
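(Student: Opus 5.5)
We argue by contradiction and assume $(q,d)\neq(3,3)$, with $q$ and $d$ odd, $d\ge3$ and $q\ge3$. The key input is Proposition~\ref{prop:supersingular}. It says that the twist $\phi'$ has good supersingular reduction at every prime $(H)$ with $1\le\deg H<d$, with Frobenius polynomial $Z^2-c_HH$, where $c_H=\operatorname{Norm}_{(A/\mathfrak p)/\mathbf F_q}(H)^r$. We aim to show that too many small primes are forced to be supersingular, contradicting a known count of supersingular points.

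The first observation is that supersingularity at $(H)$ means the reduction $\overline{\phi'}$ over $k_{(H)}$ has supersingular $j$-invariant. Moreover $j(\phi')=j(\phi)\in F$ reduces to an element of $k_{(H)}$ that is a root of the supersingular polynomial $\mathrm{ss}_H(X)\in k_{(H)}[X]$. Gekeler's results give that supersingular $j$-invariants lie in the quadratic extension of $k_{(H)}$. There are about $q^{\deg H}/(q^2-1)$ of them, and for $\deg H$ odd one of them is $j=0$.

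\textbf{Degree-one primes.} These give the cleanest constraint. For $\deg H=1$ the only supersingular $j$-invariant is $0$. So $j(\phi)\equiv0\pmod{(H)}$ for all $q$ monic linear $H$, that is, $T^q-T$ divides the numerator of $j(\phi)$.

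\textbf{Degree-two primes.} For $\deg H=2$ the supersingular values are again limited: roughly the roots of an explicit polynomial of degree about $1$ over $k_{(H)}$.

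\textbf{Combining.} Write $j(\phi)=g^{q+1}/\Delta$ and use the explicit form $j$ with numerator divisible by $\prod_{\deg H<d}H$, up to finitely many exceptions at $\mathfrak p$. This forces the numerator to have degree at least $\sum_{\deg H<d}\deg H\approx q^{d-1}$. We then need an upper bound on $\deg j(\phi)$ (its height) to derive a contradiction.

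I expect the height bound to be the main obstacle. The tool at hand is Theorem~\ref{thm:bad-reduction-bound}, but it bounds only the number of places with poles, not their multiplicity. I would therefore try a more arithmetic route. Since $j(\phi)$ is integral at all primes of degree $<d$ (Proposition~\ref{prop:supersingular} excludes poles there) and, by Theorem~\ref{thm:bad-reduction-bound}, has poles at places of total degree at most $g_{\mathfrak p}$, consider the values $j(\phi)\bmod H$. The value must be a supersingular invariant that is also consistent with the Frobenius trace $A_H=0$ and $\delta_H=-c_H$. The sign of $c_H$ varies with the residue $\operatorname{Norm}(H)^r$, while the supersingular locus is fixed. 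I would try to exploit a parity or quadratic-residue conflict. Concretely, take the degree-one primes $H=T-a$ for $a\in\mathbf F_q$. There the reduction has $j=0$, so it has extra automorphisms by $\mathbf F_{q^2}^\times$, and its Frobenius $Z^2-c_HH$ must be compatible with $\mathbf F_{q^2}$-CM. This forces $-c_H H$ to be a specific norm form, i.e.\ it pins down $c_H$ in terms of $a$. On the other hand, $c_{T-a}=P(a)^{r}$ (up to sign) is computed from $P$. Requiring this to hold for all $q$ values of $a$ should imply that $P(a)^r$ is constant on $\mathbf F_q$. Counting values of the degree-$d$ polynomial $P$ then yields a contradiction unless $q$ and $d$ are tiny, hopefully only $(3,3)$. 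Checking this final inequality and the leftover small cases is where I expect the work to concentrate.
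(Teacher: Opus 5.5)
Your proposal does not reach a proof: both routes you sketch stall, and the step that produces the contradiction is missing.

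\textbf{The height route.} It fails even before a height bound is needed. Supersingular reduction at $(H)$ only says that $j(\phi)\bmod H$ is a root of the supersingular polynomial. This forces $j(\phi)\equiv0$ only when $\deg H=1$. For $\deg H\ge2$ there are other supersingular invariants, and for $\deg H$ even $j=0$ is not supersingular at all. So nothing makes $\prod_{\deg H<d}H$ divide the numerator of $j(\phi)$.

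\textbf{The degree-one route.} This cannot work either. Over $k_{(T-a)}=\mathbf F_q$, the module $\phi_T=a+c^{-1}\tau^2$ has Frobenius $\tau$ with $\tau^2=c(T-a)$. So $Z^2-c(T-a)$ occurs as a Frobenius polynomial for every $c\in\mathbf F_q^\times$, and the $j=0$ (i.e.\ $\mathbf F_{q^2}$-CM) structure puts no constraint on $c_{T-a}$. Moreover, the irreducibility of $Z^2-c_HH$ over $F_\infty$ is automatic for odd $\deg H$, since $c_HH$ has odd valuation at $\infty$. Hence nothing forces $P(a)^r$ to be constant on $\mathbf F_q$.

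\textbf{The missing idea.} You should work at primes of even degree $e<d$, where the condition at $\infty$ has content.
For $q$ odd and $H$ monic of even degree, Hensel's lemma gives $H\in(F_\infty^\times)^2$. Irreducibility of $Z^2-c_HH$ over $F_\infty$ \cite[Theorem~1.8]{Gek08} then forces $c_H\notin(\mathbf F_q^\times)^2$.
Since $(q^d-1)/(q-1)$ is odd, $r$ is odd, so $\operatorname{Norm}_{(A/\mathfrak p)/\mathbf F_q}(H)$ is a nonsquare. Quadratic reciprocity (with no sign, because $e$ is even) then gives $\bigl(\frac{P}{H}\bigr)=-1$ for every monic irreducible $H$ of even degree less than $d$.
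This pins down $\#C_P(\mathbf F_{q^n})=1+2q^{n/2}$ for every even $n<d$, where $C_P\colon y^2=P(x)$ is the curve of genus $(d-1)/2$. The Weil bound at $n=d-1$ gives $q^{(d-1)/2}\le d+1$, which leaves only $(q,d)=(3,3)$.

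Your instinct that a quadratic-residue conflict is the crux is right. But it must be extracted at even-degree primes through the condition at $\infty$, and converted into a point count on $C_P$ rather than a statement about the values $P(a)$, $a\in\mathbf F_q$.
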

\begin{proof}
Write $2k = 1+r(q^d-1)/(q-1)$, where $r$ is odd.
Let $H \in A$ be a monic irreducible polynomial  of even degree $e<d$.
Let $\phi'$ be the twist of $\phi$ defined in Proposition~\ref{prop:supersingular}.
Then it follows that $\phi'$ has good reduction at $(H)$.
Its reduction over $A/(H)$ has Frobenius polynomial
\[
Z^2-c_HH,\qquad
c_H=\operatorname{Norm}_{(A/\mathfrak{p})/\mathbf F_q}(H)^r.
\]
This polynomial is irreducible over $F_\infty$ by
\cite[Theorem 1.8]{Gek08}.
Since $q$ is odd and $e$ is even, Hensel's lemma gives
\[
H\in T^e\bigl(1+T^{-1}\mathbf F_q[[T^{-1}]]\bigr)
\subset(F_\infty^\times)^2.
\]
Thus $c_H\notin  (\mathbf F_q^\times)^2$.
As $r$ is odd, we also have $\operatorname{Norm}_{(A/\mathfrak{p})/\mathbf F_q}(H) \notin  (\mathbf F_q^\times)^2$. 
Since $e$ is even, quadratic reciprocity gives
\[
\left(\frac{P}{H}\right) = \left(\frac{H}{P}\right) = \operatorname{Norm}_{(A/\mathfrak{p})/\mathbf F_q}(H)^{(q-1)/2} = -1.
\]
Let $C_P/\mathbf F_q$ denote the smooth projective curve with affine equation $y^2=P(x)$.
Take an even integer $n$ with $0<n<d$.
Since $d$ is odd, one has $\#C_P(\mathbf F_{q^n})=1 +
\#\{(x,y)\in\mathbf{F}_{q^n}^2 \mid y^2=P(x)\}$.
For $x\in\mathbf F_{q^{n/2}}$, one has
$P(x)\ne0$ since $n<d$.
Hence $P(x)$ is a square in $\mathbf{F}_{q^n}$. Thus there are exactly two points above $x$.
If $x\in\mathbf F_{q^n}\setminus\mathbf F_{q^{n/2}}$, let $H_x$
be the minimal polynomial of $x$ over $\mathbf F_q$, and put $e:=\deg H_x$.
Then $e\mid n$ and $e\nmid n/2$. Hence $e$ is even and $n/e$ is odd.
In this case, $\left(\frac{P}{H_x}\right)=-1$.
Thus $P(x)$ is a nonsquare in $\mathbf F_{q^e}$ and remains a nonsquare
in the odd-degree extension $\mathbf F_{q^n}$, so there are no points above $x$.
Thus we obtain
\[
\#C_P(\mathbf F_{q^n}) = 1 + 2q^{n/2}
\]
for any even $n$ with $0<n<d$.
The curve $C_P$ has genus $(d-1)/2$ and one rational point at infinity.
The Hasse--Weil bound for $C_P(\mathbf{F}_{q^{d-1}})$  gives
\[
q^{(d-1)/2}\le d+1.
\]
This implies $(q,d)=(3,3)$.
\end{proof}

\begin{theorem}\label{thm:main-y0}
Let $\mathfrak{p}$ be a prime ideal of $A$.
If $d = \deg(\mathfrak{p})$ is odd and $d>q$, then $Y_0(\mathfrak p)(F)=\emptyset$.
\end{theorem}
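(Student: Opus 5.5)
We argue by contradiction. Suppose $Y_0(\mathfrak p)(F)\neq\emptyset$. By Lemma~\ref{lem:descent}(1) we choose a pair $(\phi,C)$ over $F$ with isogeny character $\lambda$ and exponent $k$ as in Lemma~\ref{lem:Ishii}. The plan is to show that $\phi$ must have potentially good reduction at some finite prime of degree one. Then Lemma~\ref{lem:linear-good-reduction} and Proposition~\ref{prop:quadratic-character} combine to force $(q,d)=(3,3)$, and this is incompatible with $d>q$.

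\emph{Step 1: potentially good reduction at a degree-one prime.} Suppose instead that $\phi$ has potentially bad reduction at every degree-one prime $(T-a)$, $a\in\mathbf F_q$. Since $d>q\ge 3$, we have $d\geq 5$ (as $d$ is odd), so none of these primes equals $\mathfrak p$. Lemma~\ref{lem:power}(3) then gives $\lambda^{q-1}\in\{1,\kappa^{q-1}\}$, which directly contradicts Proposition~\ref{prop:exclude-powers}, since $d$ is odd. Hence there is a degree-one prime at which $\phi$ has potentially good reduction. Here we only need one such prime, so Lemma~\ref{lem:power}(3) applies as soon as a single degree-one prime has potentially bad reduction; the dichotomy is clean either way.

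\emph{Step 2: applying Lemma~\ref{lem:linear-good-reduction}.} With potentially good reduction at a degree-one prime and $d>q$, Lemma~\ref{lem:linear-good-reduction} shows that $d$ is odd (consistent with the hypothesis), that $q$ is odd, and that $2k\equiv 1\pmod{(q^d-1)/(q-1)}$. Thus $q\equiv d\equiv 1\pmod 2$, and the hypotheses of Proposition~\ref{prop:quadratic-character} hold for this $k$.

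\emph{Step 3: conclusion.} Proposition~\ref{prop:quadratic-character} yields $(q,d)=(3,3)$, which contradicts $d>q$. Therefore $Y_0(\mathfrak p)(F)=\emptyset$.

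The substantive work is already contained in the earlier results. The only point requiring care is Step 1: checking that the hypotheses of Lemma~\ref{lem:power}(3) are exactly the negation of what we want, namely potentially bad reduction at some degree-one prime. It is also worth noting that Lemma~\ref{lem:power}(3) already includes the case of bad reduction at $\mathfrak p$ itself, through Lemma~\ref{lem:power}(2). I expect no further obstacle.
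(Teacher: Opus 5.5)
Your proof is correct and follows the paper's argument: Lemma~\ref{lem:power}(3) combined with Proposition~\ref{prop:exclude-powers} gives potentially good reduction at the degree-one primes, and then Lemma~\ref{lem:linear-good-reduction} and Proposition~\ref{prop:quadratic-character} force $(q,d)=(3,3)$, contradicting $d>q$. The only cosmetic difference is that the paper also states $v_{\mathfrak p}(j(\phi))\ge 0$ explicitly, which Lemma~\ref{lem:linear-good-reduction} already supplies in your version.
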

\begin{proof}
Suppose that $Y_0(\mathfrak p)(F)\ne\emptyset$, and choose
$(\phi,C)$ by Lemma~\ref{lem:descent}(1).
Lemma~\ref{lem:power}, together with
Proposition~\ref{prop:exclude-powers}, shows that $v_{\mathfrak p}(j(\phi))\ge0$ and $v_{(T-c)}(j(\phi))\ge0$ for all $c\in\mathbf F_q$.
It then follows from Lemma~\ref{lem:linear-good-reduction} that $q$ is odd and $2k\equiv1\pmod{(q^d-1)/(q-1)}$.
Proposition~\ref{prop:quadratic-character} forces $(q,d)=(3,3)$.
This contradicts the assumption $d>q$.
\end{proof}

\begin{remark}
When $q=3$ and $\mathfrak p=(P)$ with $P(T)=T^3-T\pm 1$, the Drinfeld modular curve $Y_0(\mathfrak p)$ admits an $F$-rational CM point.  
More strongly, for any nontrivial ideal $\mathfrak n$ of $A$ with $\deg(\mathfrak n)\geq 3$, the Drinfeld modular curve $Y_0(\mathfrak n)$ admits an $F$-rational CM point only in these two cases (see \cite[Remark~4.6]{Sch03}). 
\end{remark}

\section{Rational points on \texorpdfstring{$Y_1(\mathfrak p)$}{Y1(p)}}\label{sec:torsion}

\begin{theorem}\label{thm:rational-torsion}
Let $\mathfrak{p}$ be a prime ideal of $A$.
If $d = \deg(\mathfrak{p}) \ge3$, then $Y_1(\mathfrak p)(F)=\emptyset$.
\end{theorem}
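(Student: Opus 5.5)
A point of $Y_1(\mathfrak p)(F)$ gives, by Lemma~\ref{lem:descent}(2), a pair $(\phi,a)$ with $a\in\phi[\mathfrak p](F)\setminus\{0\}$. Then $C:=A\cdot a$ is an $F$-rational cyclic $\mathfrak p$-subgroup, so $(\phi,C)$ defines a point of $Y_0(\mathfrak p)(F)$. The decisive extra information is that the isogeny character is trivial, $\lambda=1$, since $G_F$ fixes $a$. Thus all results of Sections~\ref{sec:characters}--\ref{sec:proofs} apply with $\lambda=1$, and the case $d$ even with $d\ge4$ already follows from Theorem~\ref{thm:even}. Hence we may assume $d$ is odd, and in particular $d\ge3$.

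For $d$ odd, Proposition~\ref{prop:exclude-powers} asserts $\lambda^{q-1}\notin\{1,\kappa^{q-1}\}$. With $\lambda=1$ we have $\lambda^{q-1}=1$, a contradiction. So the whole case $d$ odd, including small $d\le q$ such as $(q,d)=(3,3)$, is settled at once.

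It remains to check that Proposition~\ref{prop:exclude-powers} only uses the hypothesis $\lambda^{q-1}\in\{1,\kappa^{q-1}\}$, and not $d>q$ or any condition that fails here. Its proof takes $\nu=q-1$ in Lemma~\ref{lem:potent-good-lem}, which requires only $q-1\mid\nu$ and $\lambda^\nu\in\{1,\kappa^\nu\}$. From this it deduces that $\phi$ has no potentially good reduction at any prime of degree $d-1$, and then compares $(d-1)\pi_q(d-1)$ with $g_{\mathfrak p}$ through Theorem~\ref{thm:bad-reduction-bound}. That comparison uses only $q\ge3$ and $r\ge1$.

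We also need the primes of degree $d-1$ to be distinct from $\mathfrak p$, which holds because their degree differs from $d$. The one point to verify carefully is that the degree-$(d-1)$ polynomials $H$ in Lemma~\ref{lem:potent-good-lem} satisfy $\deg H<d$, as that lemma requires; this is immediate. So I expect no real obstacle: the main step is simply recognising that $\lambda=1$ places us in the degenerate case already excluded for odd $d$, with Theorem~\ref{thm:even} covering even $d\ge4$.
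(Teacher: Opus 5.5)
Your proposal is correct and matches the paper's proof: for even $d\ge 4$ you pass to $(\phi,A\cdot a)\in Y_0(\mathfrak p)(F)$ and invoke Theorem~\ref{thm:even}. For odd $d$ you note that $\lambda=1$ contradicts Proposition~\ref{prop:exclude-powers}, and you have also correctly checked that this proposition needs no hypothesis $d>q$.
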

\begin{proof}
When $d$ is even, the assertion follows from Theorem~\ref{thm:even}.
Suppose that $d$ is odd and $Y_1(\mathfrak p)(F)\ne\emptyset$.
Then  there are a rank-two Drinfeld
$A$-module $\phi/F$ and a point $a\in\phi[\mathfrak p](F)\setminus\{0\}$.
The isogeny character associated with $Aa$ is trivial, so $\lambda=1$.
This contradicts Proposition~\ref{prop:exclude-powers}.
\end{proof}

\bibliographystyle{alpha}
\bibliography{main}

\begin{thebibliography}{Tam95}

\bibitem[Arm12]{Arm12}
C\'ecile Armana.
\newblock Torsion des modules de {D}rinfeld de rang 2 et formes modulaires de
  {D}rinfeld.
\newblock {\em Algebra Number Theory}, 6(6):1239--1288, 2012.

\bibitem[CT21]{CT21}
Brian Conrad and Michael Temkin.
\newblock Descent for nonarchimedean analytic spaces.
\newblock {\em Tunis. J. Math.}, 3(4):689--748, 2021.

\bibitem[Dri74]{Dri74}
V.~G. Drinfel'd.
\newblock Elliptic modules.
\newblock {\em Mat. Sb. (N.S.)}, 94(136):594--627, 656, 1974.

\bibitem[Gek86]{Gek86}
Ernst-Ulrich Gekeler.
\newblock {\"U}ber {D}rinfeldsche {M}odulkurven vom {H}ecke-{T}yp.
\newblock {\em Compositio Math.}, 57(2):219--236, 1986.

\bibitem[Gek08]{Gek08}
Ernst-Ulrich Gekeler.
\newblock Frobenius distributions of {D}rinfeld modules over finite fields.
\newblock {\em Trans. Amer. Math. Soc.}, 360(4):1695--1721, 2008.

\bibitem[Ish22]{Ish22}
Shun Ishii.
\newblock On isogeny characters of {D}rinfeld modules of rank two.
\newblock {\em Math. Z.}, 301(1):455--470, 2022.

\bibitem[Ish26]{IshiiNote}
Shun Ishii.
\newblock On certain bases of finite fields and their application to rational
  points on {Drinfeld} modular curves, August 2026.
\newblock Unpublished note, dated August 15, 2026.

\bibitem[Maz77]{Maz77}
B.~Mazur.
\newblock Modular curves and the {E}isenstein ideal.
\newblock {\em Inst. Hautes \'Etudes Sci. Publ. Math.}, 47:33--186, 1977.
\newblock With an appendix by Mazur and M. Rapoport.

\bibitem[Maz78]{Maz78}
B.~Mazur.
\newblock Rational isogenies of prime degree (with an appendix by {D}.
  {G}oldfeld).
\newblock {\em Invent. Math.}, 44(2):129--162, 1978.

\bibitem[P{\'a}l07]{Pal07}
Ambrus P{\'a}l.
\newblock On the {E}isenstein ideal of {D}rinfeld modular curves.
\newblock {\em Int. J. Number Theory}, 3(4):557--598, 2007.

\bibitem[P{\'a}l10]{Pal10}
Ambrus P{\'a}l.
\newblock On the torsion of {D}rinfeld modules of rank two.
\newblock {\em J. Reine Angew. Math.}, 640:1--45, 2010.

\bibitem[Pap04]{Pap04}
Mihran Papikian.
\newblock On component groups of {J}acobians of {D}rinfeld modular curves.
\newblock {\em Ann. Inst. Fourier (Grenoble)}, 54(7):2163--2199, 2004.

\bibitem[Pap07]{Pap07}
Mihran Papikian.
\newblock Abelian subvarieties of {D}rinfeld {J}acobians and congruences modulo
  the characteristic.
\newblock {\em Math. Ann.}, 337(1):139--157, 2007.

\bibitem[Poo97]{Poo97}
Bjorn Poonen.
\newblock Torsion in rank {$1$} {D}rinfeld modules and the uniform boundedness
  conjecture.
\newblock {\em Math. Ann.}, 308(4):571--586, 1997.

\bibitem[PT21]{PT20I}
J\'er\^ome Poineau and Daniele Turchetti.
\newblock Berkovich curves and {S}chottky uniformization {I}: the {B}erkovich
  affine line.
\newblock In {\em Arithmetic and geometry over local fields---{VIASM} 2018},
  volume 2275 of {\em Lecture Notes in Math.}, pages 179--223. Springer, Cham,
  [2021] \copyright 2021.

\bibitem[PW15]{PW15}
Mihran Papikian and Fu-Tsun Wei.
\newblock The {E}isenstein ideal and {J}acquet-{L}anglands isogeny over
  function fields.
\newblock {\em Doc. Math.}, 20:551--629, 2015.

\bibitem[R{\"o}s15]{Ros15}
Damian R{\"o}ssler.
\newblock On the group of purely inseparable points of an abelian variety
  defined over a function field of positive characteristic.
\newblock {\em Comment. Math. Helv.}, 90(1):23--32, 2015.

\bibitem[Sch03]{Sch03}
Andreas Schweizer.
\newblock On the uniform boundedness conjecture for {D}rinfeld modules.
\newblock {\em Math. Z.}, 244(3):601--614, 2003.

\bibitem[Tam95]{Tam95}
Akio Tamagawa.
\newblock The {E}isenstein quotient of the {J}acobian variety of a
  {D}rinfel'd{} modular curve.
\newblock {\em Publ. Res. Inst. Math. Sci.}, 31(2):203--246, 1995.

\bibitem[Tei91]{Tei91}
Jeremy~T. Teitelbaum.
\newblock The {P}oisson kernel for {D}rinfel'd{} modular curves.
\newblock {\em J. Amer. Math. Soc.}, 4(3):491--511, 1991.

\end{thebibliography}

\end{document}